\newenvironment{claimproof}[1]{\noindent{\bf Proof of Claim #1:\@}}{\hfill $\square$\\}
\newcommand\nix{\,\cdot\,}
\newcommand\Forb{\cF}
\newcommand\contig{\triangleleft}
\newcommand\prc{\pi^{\mathrm{rc}}_{k,n,m}}
\newcommand\ppl{\pi^{\mathrm{pl}}_{k,n,m}}
\newcommand{\beq}{\begin{equation}} \newcommand{\eeq}{\end{equation}}
\newcommand\G{\vec H}
\newcommand\hnm{\cH(n,m)}
\newcommand\hknm{H_k(n,m)}
\numberwithin{equation}{section}
\newcommand\br[1]{\left(#1\right)}
\def\vec#1{\mathchoice{\mbox{\boldmath$\displaystyle#1$}}
{\mbox{\boldmath$\textstyle#1$}}
{\mbox{\boldmath$\scriptstyle#1$}}
{\mbox{\boldmath$\scriptscriptstyle#1$}}}
\newcommand{\Zw}{Z_{\omega}}
\newcommand{\Zwni}{Z_{\omega, \nu}^s}
\newcommand{\Zwnie}{Z_{\omega, \nu, \eta}^{s\,(2)}}
\newcommand{\Zwnis}{Z_{\omega, \nu, n^{-3/8}}^{s\,(2)}}
\newcommand{\Aw}{\cA_\w(n)}
\newcommand{\Awni}{\cA_{\w,\nu}^s(n)}
\newcommand{\Bw}{\cB_\w(n)}
\newcommand{\Bwni}{\cB_{\w,\nu}^s(n)}
\newcommand{\Bwnie}{\cB_{\w, \nu, \eta}^s(n)}
\newcommand{\Bwnis}{\cB_{\w, \nu, n^{-3/8}}^s(n)}
\newcommand{\Bwnisr}{\cB_{\w, \nu, n^{-3/8}}^s(n, \rho^0, \rho^1)}
\newcommand{\rwni}{\rho_{\w,\nu}^s}
\newcommand{\dsec}{d_{k,\mathrm{sec}}}
\newcommand{\dcol}{d_{k,\mathrm{col}}}
\DeclareMathOperator{\pr}{\mathbb P}
\newcommand\SIGMA{\vec\sigma}
\newtheorem{definition}{Definition}[section]
\newtheorem{claim}[definition]{Claim}
\newtheorem{remark}[definition]{Remark}
\newtheorem{theorem}[definition]{Theorem}
\newtheorem{lemma}[definition]{Lemma}
\newtheorem{proposition}[definition]{Proposition}
\newtheorem{corollary}[definition]{Corollary}
\newtheorem{fact}[definition]{Fact}
\newcommand\cA{\mathcal{A}}
\newcommand\cB{\mathcal{B}}
\newcommand\cC{\mathcal{C}}
\newcommand\cF{\mathcal{F}}
\newcommand\cE{\mathcal{E}}
\newcommand\cH{\mathcal{H}}
\newcommand\cS{\mathcal{S}}
\newcommand\cT{\mathcal{T}}
\newcommand\cV{\mathcal{V}}
\def\cC{{\mathcal C}}
\def\cE{{\mathcal E}}
\newcommand\eps{\varepsilon}
\newcommand\Var{\mathrm{Var}}
\newcommand\Erw{\mathbb{E}}
\newcommand\w{{\omega}}
\newcommand{\vecone}{\vec{1}}
\newcommand{\Po}{{\rm Po}}
\newcommand{\bink}[2] {{{#1}\choose {#2}}}
\newcommand\ra{\rightarrow}
\newcommand\bc[1]{\left({#1}\right)}
\newcommand\cbc[1]{\left\{{#1}\right\}}
\newcommand\brk[1]{\left\lbrack{#1}\right\rbrack}
\newcommand\norm[1]{\left\|{#1}\right\|}
\newcommand{\whp}{w.h.p.}
\newcommand{\Erdos}{Erd\H{o}s}
\newcommand{\Renyi}{R\'enyi}
\newcommand\Lem{Lemma}
\newcommand\Prop{Proposition}
\newcommand\Thm{Theorem}
\newcommand\Cor{Corollary}
\newcommand\Sec{Section}
\begin{document}

\title{On the number of solutions in random hypergraph 2-colouring}

\author[Felicia Rassmann]{Felicia Rassmann$^*$}
\thanks{$^*$The research leading to these results has received funding from the European Research Council under the European Union's Seventh Framework
			Programme (FP/2007-2013) / ERC Grant Agreement n.\ 278857--PTCC}
\date{\today} 

\address{Felicia Rassmann, {\tt rassmann@math.uni-frankfurt.de}, Goethe University, Institute of Mathematics, 10 Robert-Mayer-Str, Frankfurt 60325, Germany.}

\begin{abstract}
\noindent We determine the limiting distribution of the logarithm of the number of satisfying assignments in the random $k$-uniform hypergraph 2-colouring problem in a certain density regime for all $k\ge 3$.
As a direct consequence we obtain that in this regime the random colouring model is contiguous wrt.~the planted model, a result that helps simplifying the transfer of statements between these two models.  

\end{abstract}

\maketitle
\section{Introduction}

\subsection{Background and motivation}
A main focus when studying random constraint satisfaction problems is on determining the expected value of the number of solutions and understanding how this number evolves when the constraint density changes.
However, up to now the distribution of the number of solutions remains elusive in any of the standard examples of random constraint satisfaction problems. 

In this paper we consider {\em random $k$-uniform hypergraphs} $H_k(n,m)$ on the vertex set $\brk{n}=\{1,\ldots,n\}$ with exactly $m$ hyperedges each comprising of $k$ distinct vertices and chosen uniformly at random from all possible subsets of $\brk{n}$ of size $k$. The {\em hypergraph 2-colouring problem} is a random constraint satisfaction problem  where one is interested in the number $Z(\hknm)$ of {\em 2-colourings} (also called {\em solutions}) of $\hknm$, which are maps $\sigma: \brk{n} \to \{0,1\}$ that generate no \textit{monochromatic} edges (i.e. edges $e$ such that $|\sigma(e)|=1$).

In the following we only consider \textit{sparse} random hypergraphs, meaning that $m=O(n)$ as $n\to\infty$. We call the parameter $d=km/n$ the \textit{hyperedge density}.
 
As for many other random constraint satisfaction problems, there is a conjecture as to a sharp threshold for the existence of solutions in terms of the hyperedge density. The best currently known bounds on this threshold $\dcol$ are from Achlioptas and Moore \cite{Achlioptas} and Coja-Oghlan and Zdeborov\'a \cite{Lenka}. Furthermore, there was a prediction for this density by statistical physicists \cite{dallasta, kmrsz} suggesting that
$$\dcol/k = 2^{k-1}\ln 2-\frac{\ln 2}2-\frac 14+\eps_k \qquad \text{with } \lim_{k\to\infty}\eps_k \to 0.$$

This prediction was proved by Coja-Oghlan and Panagiotou \cite{Panagiotou} for the problem of NAE-$k$-SAT which is almost equivalent to hypergraph 2-colouring and it should be possible to transfer the result without major difficulties.

For a long period of time, the best rigorous upper and lower bounds on $\dcol$ were based on the non-constructive {\em first} and {\em second moment method} applied to the random variable $Z$. Achlioptas and Moore \cite{Achlioptas} proved that there is a critical density $\dsec$ such that $\Erw\brk{Z^2}\le C\cdot \Erw\brk{Z}^2$ for some constant $C=C(d,k)>0$ for all $d<\dsec$ but is violated for $d>\dsec$. Via the Paley-Zygmund inequality it can be established that $\dsec$ is a non-constructive lower bound on $\dcol$. In~\cite{Achlioptas} it was shown that
\begin{align*}
 \dsec/k&=2^{k-1}\ln 2-\frac{\ln 2}2-\frac 12+\eps_k,\qquad\mbox{ with }\lim_{k\ra\infty}\eps_k=0.
\end{align*} 


\subsection{Results}

The main result in the present paper is to show that under certain conditions the number $Z(\hknm)$ of $2$-colourings of the random $k$-uniform hypergraph
is concentrated remarkably tightly and to even obtain the distribution of $\ln Z(\hknm)-\ln \Erw{\brk{Z(\hknm)}}$ asymptotically in a density regime essentially up to $\dsec$. 

As our computations have to be very precise in order to obtain these results, we need to distinguish between the quantity $d^{\prime}$, which is a fixed number such that $m = \lceil d^{\prime}n/k \rceil$, and the quantity $d = km/n$, which arises naturally in the computations of the first and second moment. We note that the quantity $d = d(n)$ depends on $n$, whereas $d^{\prime}$ is assumed to be fixed as $n\ra\infty$. However, it is elementary to show that $d^{\prime} \sim d$.
We also always require that $k\ge 3$.

\begin{theorem}\label{Thm_main}
Let $k\geq3$ and $d^{\prime}/k\leq 2^{k-1}\ln 2-2$ as well as
\begin{align*}
	\lambda_l=\frac{[d(k-1)]^l}{2l}\quad\mbox{ and }\quad\delta_l=\frac{(-1)^l}{\br{2^{k-1}-1}^l}
\end{align*}
for $l\ge 2$. Further let $(X_l)_l$ be a family of independent Poisson variables with $\Erw[X_l]=\lambda_l$, all defined on the same probability space. Then the random variable
	$$W=\sum_{l}X_l\ln(1+\delta_l)-\lambda_l\delta_l$$
satisfies $\Erw |W|<\infty$ and $\ln Z(\hknm)-\ln \Erw{\brk{Z(\hknm)}}$ converges in distribution to $W$.
\end{theorem}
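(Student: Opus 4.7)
The plan is to invoke the small subgraph conditioning method of Robinson and Wormald, in the sharpened form due to Janson. For each $\ell\geq 2$ write $C_{\ell,n}$ for the number of hyperedge-cycles of length $\ell$ in $\hknm$, i.e.\ the number of cyclic sequences of $\ell$ distinct hyperedges in which consecutive edges share exactly one vertex and no other vertex is shared by two non-consecutive edges. These short cycles are the microscopic structures responsible for the fluctuations of $\ln Z(\hknm)$ around its mean. It suffices to verify the four ingredients of Janson's theorem: first, that $(C_{\ell,n})_{\ell\geq 2}$ converges jointly in distribution to independent Poissons $(X_\ell)$ with means $\lambda_\ell$; second, that for every finite vector $(j_\ell)_{\ell=2}^L$,
\begin{equation*}
\frac{\Erw\brk{Z(\hknm)\prod_{\ell=2}^L(C_{\ell,n})_{j_\ell}}}{\Erw\brk{Z(\hknm)}}\longrightarrow \prod_{\ell=2}^L\brk{\lambda_\ell(1+\delta_\ell)}^{j_\ell},
\end{equation*}
where $(x)_j$ denotes the falling factorial; third, that $\sum_\ell\lambda_\ell\delta_\ell^2<\infty$; and fourth, that $\Erw\brk{Z(\hknm)^2}/\Erw\brk{Z(\hknm)}^2\longrightarrow\exp(\sum_\ell\lambda_\ell\delta_\ell^2)$.

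The first two conditions reduce to standard moment calculations on sparse random $k$-uniform hypergraphs. For the size-biased cycle count the key identity is that the probability a uniformly random 2-colouring leaves all $\ell$ edges of a prescribed length-$\ell$ cycle non-monochromatic equals $(1-2^{1-k})^\ell(1+\delta_\ell)$; this follows from an inclusion-exclusion around the cycle on the pattern of monochromatic vertices, and after dividing by the unconditional edge probabilities the factor $(1+\delta_\ell)$ emerges as exactly the per-cycle bias. The third condition is immediate because $d(k-1)/(2^{k-1}-1)^2<1$ throughout the window $d^{\prime}/k\leq 2^{k-1}\ln 2-2$, so the series converges geometrically; the same estimate combined with $|\ln(1+\delta_\ell)|\leq 2|\delta_\ell|$ for large $\ell$ will deliver $\Erw|W|<\infty$ at the end.

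The principal obstacle is the fourth condition. The second moment decomposes as $\Erw\brk{Z(\hknm)^2}=\sum_{\sigma,\tau}\pr\brk{\sigma,\tau\text{ both satisfy }\hknm}$, grouped by the overlap $\alpha=|\sigma^{-1}(0)\cap\tau^{-1}(0)|/n$. Achlioptas and Moore's analysis shows that for $d^{\prime}/k<\dsec/k$ the sum is dominated by the symmetric overlap $\alpha=1/4$, but only to leading exponential order; here one needs the ratio $\Erw\brk{Z^2}/\Erw\brk{Z}^2$ with constant-order precision. The plan is to apply Laplace's method to the entropy-plus-edge-probability functional near $\alpha=1/4$, compute the Hessian and Gaussian prefactor explicitly, and verify that after all cancellations the limiting constant is exactly $\exp(\sum_\ell\lambda_\ell\delta_\ell^2)$. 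This matching is the heart of the small subgraph conditioning philosophy: the cycle-structure contributions to $\Erw\brk{Z^2}$ must reproduce precisely the Poisson--Gaussian product on the right-hand side, and the restriction $d^{\prime}/k\leq 2^{k-1}\ln 2-2$ (rather than the full window up to $\dsec$) is what leaves enough slack to absorb the remainder terms in this Laplace expansion and to ensure that no non-symmetric overlap contributes at the constant level.

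Given the four ingredients, Janson's theorem gives
\begin{equation*}
\frac{Z(\hknm)}{\Erw\brk{Z(\hknm)}}\;\Longrightarrow\;\prod_{\ell\geq 2}(1+\delta_\ell)^{X_\ell}\exp(-\lambda_\ell\delta_\ell)
\end{equation*}
in distribution, and since the limit is almost surely strictly positive while $\pr\brk{Z(\hknm)>0}\to 1$ by the second-moment bound, the continuous mapping theorem applied to $\ln$ yields $\ln Z(\hknm)-\ln\Erw\brk{Z(\hknm)}\Longrightarrow W$, completing the proof.
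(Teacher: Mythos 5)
Your overall framework (small subgraph conditioning on the short-cycle counts $C_{\ell,n}$, with exactly the paper's $\lambda_\ell,\delta_\ell$, the size-biased moment condition, and the conclusion via continuity of $\ln$ on $(0,\infty)$) is the right one in spirit, but the decisive step of your plan is asserted rather than proved, and the way you set it up would not deliver it. You reduce the second moment to a one-parameter Laplace expansion in the scalar overlap $\alpha=|\sigma^{-1}(0)\cap\tau^{-1}(0)|/n$ around $\alpha=1/4$. In fact the relevant overlap is a $2\times2$ matrix with three free parameters: besides the correlation parameter, the colour densities of $\sigma$ and of $\tau$ both fluctuate on the scale $n^{-1/2}$, and these marginal fluctuations contribute to $\Erw\brk{Z(\hknm)^2}$ and to $\Erw\brk{Z(\hknm)}^2$ at exactly the constant order you need to control. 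Whether they cancel in the ratio is precisely the delicate point: one has to show that the second-moment exponent splits over product overlaps as $f_1\oplus f_1$, that the Hessian at the symmetric overlap has no cross terms between the marginal directions and the correlation direction, and that overlaps with unbalanced marginals (not covered by the Achlioptas--Moore function $\bar f_2$, which fixes the marginals at $1/2$) are negligible. Your proposal simply announces that ``after all cancellations the limiting constant is exactly $\exp(\sum_\ell\lambda_\ell\delta_\ell^2)$''; that is the heart of the theorem, not a routine remainder estimate absorbed by the slack in $d'/k\le 2^{k-1}\ln 2-2$.

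This is also exactly where the paper departs from your route. It observes that verifying Janson's hypotheses directly for $Z$ is too intricate and instead partitions the balanced colourings into density windows of width $1/(\nu\sqrt n)$, defining $Z^s_{\omega,\nu}$ so that the marginals are effectively pinned; for these restricted variables it proves the sharp asymptotics $\Erw\brk{(Z^s_{\omega,\nu})^2}/\Erw\brk{Z^s_{\omega,\nu}}^2\sim_\nu\exp(\sum_\ell\lambda_\ell\delta_\ell^2)$ (Propositions~\ref{Prop_sm_eta} and~\ref{Prop_second_moment_balanced_exact}) together with the conditional first-moment formula (Proposition~\ref{Prop_ratio_first_cond}). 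Since Janson's theorem cannot be applied simultaneously to this growing family, the paper then reassembles $Z=\sum_s Z^s_{\omega,\nu}$ by a Robinson--Wormald-style variance decomposition (Lemmas~\ref{Lem_conc_1}--\ref{Lem_conc_2}) rather than by citing Janson as a black box. So to salvage your proposal you would either have to carry out the full three-dimensional Laplace analysis for the unrestricted $Z$ (including the unbalanced regime and the block-diagonalisation of the Hessian), or adopt the paper's decomposition; as written, the fourth condition is a genuine gap. Minor points: your claimed per-cycle bias identity and the bound $\Erw|W|<\infty$ via $\sum_\ell\delta_\ell\sqrt{\lambda_\ell}<\infty$ are fine, and the final continuous-mapping step is legitimate because the limit $\prod_\ell(1+\delta_\ell)^{X_\ell}e^{-\lambda_\ell\delta_\ell}$ is almost surely positive.
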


\begin{remark}
By definition, $W$ has an infinitely divisible distribution.
It was shown in \cite{Janson} that the random variable $W^{\prime}=\exp\brk{W}$ converges almost surely and in $L^2$ with $\Erw\brk{W^{\prime}}=1$ and $\Erw\brk{{W^{\prime}}^2}=\exp\brk{\sum_l\lambda_l\delta_l^2}$. Thus, by Jensen's inequality it follows that $\Erw\brk{W}\le 0 $ and $\Erw\brk{W^2}\le \sum_l\lambda_l\delta_l^2$. 
\end{remark}

As a direct consequence of \Thm~\ref{Thm_main} we obtain the following. 
\begin{corollary}\label{Cor_conc}
Assume that $k\geq3$ and $d^{\prime}/k\leq 2^{k-1}\ln 2-2$. Then
	\begin{equation}\label{eq_Cor_1}
	 \lim_{\omega\ra\infty}\lim_{n\ra\infty}\pr\brk{|\ln Z(\hknm)-\ln\Erw\brk{Z(\hknm}|\leq\omega}=1.
	\end{equation}

On the other hand, for any fixed number $\omega>0$ we have
	$$\lim_{n\ra\infty}\pr\brk{|\ln Z(\hknm)-\ln\Erw[Z(\hknm)]|\leq\omega}<1.$$
\end{corollary}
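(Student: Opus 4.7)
The plan is to deduce both parts directly from Theorem~\ref{Thm_main}. Set $Y_n := \ln Z(\hknm) - \ln \Erw\brk{Z(\hknm)}$, so that Theorem~\ref{Thm_main} gives $Y_n \Rightarrow W$ in distribution; the two assertions of the corollary then correspond to the tightness and non-degeneracy halves of this convergence.

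For \eqref{eq_Cor_1}, the bound $\Erw|W| < \infty$ supplied by Theorem~\ref{Thm_main} ensures that $W$ is almost surely finite, so $\pr\brk{|W|\le\omega}\to 1$ as $\omega\to\infty$. Given $\eta > 0$, I would fix $\omega_0$ for which both $\omega_0$ and $-\omega_0$ are continuity points of the CDF of $W$ (possible since there are at most countably many atoms) and such that $\pr\brk{|W|\le\omega_0}\ge 1-\eta$. The Portmanteau theorem then yields $\lim_{n\to\infty}\pr\brk{|Y_n|\le\omega_0}\ge 1-\eta$, and letting $\eta\to 0$ gives the claim.

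For the second assertion, the essential input is that $W$ has unbounded support. This follows from independence of the $(X_l)_l$ together with the facts that each Poisson variable $X_l$ attains arbitrarily large values with positive probability and each coefficient $\ln(1+\delta_l)$ is nonzero: splitting $W = \sum_{l\le L}A_l + \sum_{l>L}A_l$ with $A_l := X_l \ln(1+\delta_l)-\lambda_l\delta_l$, choosing $L$ large to make the tail small in $L^1$ (hence in probability) and then choosing a value of $X_2$ that forces the head to be large, independence combines the two to give $\pr\brk{|W|>\omega}>0$ for every $\omega>0$. Applying Portmanteau to the closed set $[-\omega,\omega]$ then gives $\limsup_{n\to\infty}\pr\brk{|Y_n|\le\omega}\le\pr\brk{|W|\le\omega}<1$. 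All the real work was done in Theorem~\ref{Thm_main}; what remains is standard weak-convergence bookkeeping, and the only minor subtlety — the need to select continuity points of the CDF of $W$ to turn weak convergence into equality of probabilities — is dispatched by the countability of the set of atoms.
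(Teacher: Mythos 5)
Your proposal is correct, and the first half is essentially the paper's argument: the paper also deduces \eqref{eq_Cor_1} directly from Theorem~\ref{Thm_main} together with $\Erw|W|<\infty$ (it uses Markov's inequality on $|W|$ where you use a.s.\ finiteness plus continuity points; both are routine weak-convergence bookkeeping). The second half is where you genuinely diverge. The paper does \emph{not} argue through the limit law at all: it constructs an explicit combinatorial event $\cT_t$ that $\hknm$ contains $t$ isolated triangles, notes $\liminf_n\pr[\cT_t]>\eps(d,t)>0$, computes the exact number $\br{2^{k-2}-1}\br{2^{2k-1}-2^k+2}$ of $2$-colourings of such a component to show $\Erw\brk{Z\mid\cT_t}\le\Erw\brk{Z}/(2\eul^{\omega})$ for $t$ large, and then applies Markov's inequality conditionally to force $\ln Z<\ln\Erw\brk Z-\omega$ with probability at least $\eps/2$. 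You instead show the limit variable $W$ has unbounded support, by splitting off an $L^1$-small tail and pushing $X_2$ (note $\ln(1+\delta_2)>0$ and $\lambda_2>0$) to a large value while, say, conditioning the remaining head variables to $0$, and then apply the Portmanteau upper bound on the closed set $[-\omega,\omega]$; this works because $\sum_l|\delta_l|\sqrt{\lambda_l}+\delta_l^2\lambda_l<\infty$ in the stated density regime, so the tail is indeed summable in $L^1$. Your route is cleaner in that it uses only Theorem~\ref{Thm_main} and the structure of $W$, with no extra combinatorial input; the paper's route is more hands-on but has the virtue of exhibiting the concrete mechanism (fluctuations in small components/short cycles) responsible for the non-concentration, and of not requiring any analysis of the support of $W$. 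One cosmetic point common to both arguments: strictly speaking each establishes $\limsup_n\pr\brk{|\ln Z-\ln\Erw Z|\le\omega}<1$ (or needs $\pm\omega$ to be continuity points for the limit to exist), which is all the statement really requires.
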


For $d^{\prime},k$ covered by \Cor~\ref{Cor_conc} we have $\ln Z_k(G(n,m))=\Theta(n)$ \whp. Thus, it would be reasonable to expect that  $\ln Z_k(G(n,m))$ has fluctuations of order e.~g.~ $\sqrt n$. However, the first part of \Cor~\ref{Cor_conc} shows that actually $\ln Z_k(G(n,m))$ fluctuates \whp~by no more than $\omega=\omega(n)$ for {\em any} $\omega(n)\ra\infty$.
Moreover, the second part shows that this is best possible.

\subsubsection{Planted model and silent planting}

When proving results about random hypergraphs and investigating their properties, it turns out very useful and often essential to have the notion of \textit{typical} 2-colourings at hand. By a typical 2-colouring of a random hypergraph we mean a 2-colouring chosen uniformly at random from all its 2-colourings. To make this formal, let $\Lambda_{k,n,m}$ be the set of all pairs $(H_k,\sigma)$ with $H_k=H_k(n,m)$ and $\sigma$ a $2$-colouring of $H_k$. Now we define a probability distribution $\prc[H_k,\sigma]$ on $\Lambda_{k,n,m}$ by letting
$$\prc[H_k,\sigma]=\brk{Z(H_k)\bink{\bink{n}k}m\pr\brk{H_k\mbox{ is $2$-colourable}}}^{-1}.$$

We call this distribution the {\em random colouring model} or {\em Gibbs distribution}. It can also be described as the distribution produced by the following experiment.
\begin{description}
	\item[RC1] Generate a random hypergraph $H_k=\hknm$ provided that $Z(H_k)>0$.
	\item[RC2] Choose a $2$-colouring $\sigma$ of $H_k$ uniformly at random.  
	The result of the experiment is $(H_k,\sigma)$.
\end{description}

However, up to now there is no known method to implement this experiment efficiently for a wide range of hyperedge densities. In fact, the first step {\bf RC1} is easy to process, because we are only interested in values of $d$ where $\hknm$ is 2-colourable \whp~and thus the conditioning on $Z(H_k)>0$ does not cause problems (the probability $\pr\brk{\hknm\mbox{ is $2$-colourable}}$ is close to 1). 
But what turns the direct study of the distribution $\prc$ into a challenge is step {\bf RC2}, because in the interesting density regimes we cannot even find one 2-colouring algorithmically let alone sample one uniformly: The currently best-performing algorithms for sampling a $2$-colouring of $\hknm$ are known to succeed up to density $d^{\prime}/k=c 2^{k-1}/k$ for some constant $c>0$ \cite{algofail}, which is about a factor of $k$ below the colourability threshold.

To circumvent these difficulties, we consider an alternative probability distribution
on $\Lambda_{k,n,m}$ called the {\em planted model}, which is much easier to approach. To describe this experiment, for $\sigma:[n]\ra\{0,1\}$ let
$$\Forb(\sigma)=\bink{|\sigma^{-1}(0)|}k+\bink{|\sigma^{-1}(1)|}k$$
be the number of hyperedges of the complete hypergraph that are monochromatic under $\sigma$. Then the planted distribution is induced by the following experiment:
\begin{description}
	\item[PL1] Choose a map $\SIGMA:\brk n\ra\{0,1\}$ uniformly at random provided that that $\Forb(\SIGMA)\leq\bink nk- m$.
	\item[PL2] Generate a $k$-uniform hypergraph $\G$ on $\brk n$ consisting of $m$ hyperedges that are bichromatic under $\SIGMA$ uniformly at random.
	The result of the experiment is $(\G,\SIGMA)$.
\end{description}
Thus, the probability that the planted model assigns to a pair $(H_k,\sigma)$ is
$$\ppl[H_k,\sigma]\sim\brk{2^n\bink{\bink{n}k}m\pr\brk{\mbox{$\sigma$ is a $2$-colouring of $H_k$}}}^{-1}.$$
We observe that in contrast to the ''difficult'' step {\bf RC2}, step {\bf PL2} is much easier to implement.

Of course, the two probability distributions $\prc$ and $\ppl$ differ. Under $\prc$, the hypergraph is chosen uniformly at random, whereas under $\ppl$ it comes up with a probability that is proportional to its number of solutions, meaning that hypergraphs exhibiting many 2-colourings are ``favoured`` by the planted model.

However, the two models are related if $m=m(n)$ is such that
\begin{equation}\label{eq_FriedgutConcentration}
\ln Z(\hknm)=\ln\Erw[Z(\hknm)]+o(n)\qquad\mbox\whp
\end{equation}

Coja-Oghlan an Achlioptas showed in \cite{Barriers} (where it was actually formulated for the problem of graph $k$-colouring, but the authors asserted that it also holds for hypergraph 2-colouring), that if~(\ref{eq_FriedgutConcentration}) is satisfied, then the following is true.
\begin{equation}\label{eq_plantingTrick} 
\parbox{15cm}{If $(\cE_n)$ is a sequence of events $\cE_n\subset\Lambda_{k,n,m}$
	such that $\ppl[\cE_n]\leq\exp\brk{-\Omega(n)}$, then $\prc[\cE_n]=o(1)$.}
\end{equation}

The statement~(\ref{eq_plantingTrick}) was baptised ``quiet planting'' by Krzakala and Zdeborov\'a~\cite{QuietPlanting} and has ever since been used to study the behaviour of the set of colourings and its geometrical structure in various random constraint satisfaction problems \cite{Barriers,Cond,Molloy,MolloyRestrepo,Reconstr}.
Yet a significant complication in the use of~(\ref{eq_plantingTrick}) is that 
$\cE_n$ must not only have a small probability but is required to be {\em exponentially} unlikely in the planted model.
This has caused substantial difficulties in several applications (e.g., \cite{Bapst,Cond,Molloy}).

\Thm~\ref{Thm_main} enables us to establish a very strong connection between the random colouring model and the planted model.
To state this, we recall the following definition.
Suppose that $\vec\mu=(\mu_n)_{n\geq1},\vec\nu=(\nu_n)_{n\geq1}$ are two sequences of probability measures such that
$\mu_n,\nu_n$ are defined on the same probability space $\Omega_n$ for every $n$.
Then $(\mu_n)_{n\geq1}$ is {\em contiguous} with respect to $(\nu_n)_{n\geq1}$, in symbols $\vec\mu\contig\vec\nu$, if
for any sequence $(\cE_n)_{n\geq1}$ of events such that $\lim_{n\ra\infty}\nu_n(\cE_n)=0$ we have $\lim_{n\ra\infty}\mu_n(\cE_n)=0$.

We show that as a consequence of (\ref{Thm_main}) the statement~(\ref{eq_plantingTrick}) can be sharpened in the strongest possible sense.
Roughly speaking, we are going to show that in a density regime nearly up to the second moment lower bound the random colouring model is contiguous with respect to the planted model,
	i.e., in~(\ref{eq_plantingTrick}) it suffices that $\ppl[\cE_n]=o(1)$.
	
\begin{corollary}\label{Cor_cont}
Assume that $d/k\leq 2^{k-1}\ln 2-2$. Then
	$(\prc)_{n\geq1}\contig(\ppl)_{n\geq1}.$
\end{corollary}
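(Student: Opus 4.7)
The plan is to identify the Radon--Nikodym derivative \(L_n := \dd\prc/\dd\ppl\) with (essentially) \(\Erw[Z(\hknm)]/Z(\hknm)\), and then to invoke \Thm~\ref{Thm_main} to control its tails. I will use the standard characterization: \((\prc)_{n} \contig (\ppl)_{n}\) if and only if \((L_n)\) is tight under \(\prc\), i.e., for every \(\eps > 0\) there exists \(M > 0\) such that \(\prc[L_n > M] < \eps\) for all sufficiently large \(n\). This equivalence follows from the elementary bound
\[
\prc[\cE_n] \;=\; \Erw_{\ppl}[L_n \mathbf{1}_{\cE_n}] \;\le\; M \cdot \ppl[\cE_n] + \prc[L_n > M],
\]
valid for any event \(\cE_n\) and any threshold \(M\): if the \(\prc\)-tails of \(L_n\) are uniformly small, then \(\ppl[\cE_n] \to 0\) forces \(\prc[\cE_n] \to 0\).

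Dividing the explicit expressions for \(\prc[H_k, \sigma]\) and \(\ppl[H_k, \sigma]\) given in the introduction yields
\[
L_n(H_k, \sigma) \;=\; (1 + o(1)) \cdot \frac{2^n \, \pr[\sigma \text{ is a 2-colouring of } \hknm]}{Z(H_k) \, \pr[\hknm \text{ is 2-colourable}]}.
\]
By \Thm~\ref{Thm_main} (which in particular forces \(Z(\hknm) > 0\) \whp) the denominator probability tends to \(1\). The \(\sigma\)-dependence of the numerator enters only through \(\Forb(\sigma)\); since under \(\prc\) the bias of \(\sigma\) satisfies \(|\sigma^{-1}(0)| - n/2 = O_p(\sqrt n)\) (a short concentration argument), one obtains \(2^n \pr[\sigma \text{ is a 2-colouring of } \hknm] = \Theta(\Erw[Z(\hknm)])\) with \(\prc\)-probability \(1 - o(1)\). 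Hence there exists a constant \(C = C(d^{\prime}, k) > 0\) such that on an event of \(\prc\)-probability \(1 - o(1)\),
\[
L_n \;\le\; C \cdot \frac{\Erw[Z(\hknm)]}{Z(\hknm)}.
\]

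Since \(\pr[\hknm \text{ is 2-colourable}] \to 1\), the \(\prc\)-marginal on \(H_k\) is asymptotically equivalent in total variation to the uniform distribution on \(k\)-uniform hypergraphs with \(m\) hyperedges. So \Thm~\ref{Thm_main} gives \(\ln Z(\hknm) - \ln \Erw[Z(\hknm)] \Rightarrow W\) under \(\prc\), whence \(\Erw[Z(\hknm)]/Z(\hknm) \Rightarrow e^{-W}\) under \(\prc\); the limit is almost-surely finite and hence tight. Given \(\eps > 0\), pick \(M\) so large that \(\pr[e^{-W} > M/C] < \eps/3\); convergence in distribution (together with handling the initial finitely many \(n\)) then yields \(\sup_n \prc[L_n > M] < \eps\), establishing the required tightness.

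The only step needing genuine care is the likelihood-ratio approximation---specifically, replacing the \(\sigma\)-dependent factor \(\pr[\sigma \text{ is a 2-colouring}]\) by its value at balanced \(\sigma\), and verifying that atypically unbalanced \(\sigma\) carry negligible \(\prc\)-mass. This reduces to a routine first-moment concentration computation, after which the corollary is a direct consequence of the convergence in distribution in \Thm~\ref{Thm_main}.
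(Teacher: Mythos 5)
Your argument is correct, and it reaches the corollary by the standard likelihood-ratio route rather than the paper's event-based contradiction, but the two proofs share the same mathematical core. The paper fixes events $\cA_n$ with $\ppl[\cA_n]\to 0$ and bounds $\Erw\brk{Z(\hknm)\vecone_{\cA_n}}$ in two ways: from above by $o\br{2^n(1-2^{1-k})^m}=o\br{\Erw\brk{Z(\hknm)}}$, using the uniform bound $\pr\brk{\cV(\sigma)}=O\br{(1-2^{1-k})^m}$, and from below by $\tfrac12\delta\,\prc\brk{\cA_n\mid\cE}\,\Erw\brk{Z(\hknm)}$ on the event $\cE=\{Z\ge\delta\Erw\brk{Z}\}$, which \Thm~\ref{Thm_main} makes typical; this is exactly your computation of $L_n$ and your tightness bound, unrolled for one event instead of stated as a Le Cam-type criterion. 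Your packaging buys a cleaner, reusable quantitative statement (tightness of $\dd\prc/\dd\ppl$ under $\prc$), at the cost of the extra step you flag, namely the near-balancedness of $\sigma$ under $\prc$. That step is in fact unnecessary: $\Forb(\sigma)$ is minimized at balanced $\sigma$, so $2^n\pr\brk{\sigma\mbox{ is a 2-colouring of }\hknm}$ is maximized there and is $O\br{\Erw\brk{Z(\hknm)}}$ uniformly over \emph{all} $\sigma$; since tightness of $L_n$ only requires an upper bound, you obtain $L_n\le C\,\Erw\brk{Z(\hknm)}/Z(\hknm)$ deterministically for large $n$ (using $\pr\brk{\hknm\mbox{ is 2-colourable}}\to1$, which indeed follows from \Thm~\ref{Thm_main} because $W$ is almost surely finite), and this is precisely how the paper's proof avoids discussing the colour density of $\sigma$ altogether. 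Two small remarks: only the easy direction of your ``if and only if'' is needed (and only that direction follows from your displayed inequality, though the converse also holds), and you should state explicitly that conditioning on 2-colourability, an event of probability $1-o(1)$, does not affect the convergence in distribution you import from \Thm~\ref{Thm_main}.
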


As done in \cite{aco_plantsil}, we refer to this contiguity statement as \textit{silent planting}.

\subsection{Discussion and further related work.}\label{Sec_discussion}

The ideas for the proofs follow the way beaten in \cite{aco_plantsil}, where statements analogue to~\Cor~\ref{Cor_conc} and~\Cor~\ref{Cor_cont} are shown for the problem of $k$-colouring random graphs. However, \Thm~\ref{Thm_main} is stronger than the results obtained in \cite{aco_plantsil} because we determine the exact distribution of $\ln Z-\Erw\brk{\ln Z}$ asymptotically. 
The main observation used in the proofs is that the variance in the logarithm of the number of 2-colourings can be attributed to the fluctuations of the number of cycles of bounded length. The same phenomenon was observed in \cite{aco_plantsil} and also in \cite{aco_Wormald}, where a combination of the second moment method and small subgraph conditioning was applied to derive a result similar to ours for the problem of random regular $k$-SAT.
 
Small subgraph conditioning was originally developed by Robinson and Wormald in \cite{RobinsonWormald} to investigate the Hamiltonicity of random regular graphs of degree at least three. Janson showed in \cite{Janson} that the method can be used to obtain limiting distributions.  Neeman und Netrapalli \cite{Neeman} used the method to obtain a result on non-distinguishability of the \Erdos-\Renyi~ model and the stochastic block model. Moore \cite{Moore} used the method to determine the satisfiability threshold for positive 1-in-$k$-SAT, a Boolean satisfiability problem where each clause contains $k$ variables and demands that exactly one of them is true.

Similar to \cite{Janson}, we aim at obtaining a limiting distribution. Unfortunately, Jansons result does not apply directly in our case for the following reason. Since verifying the required properties to apply small subgraph conditioning directly for the random variable $Z$ is very intricate, we break $Z$ down into $Z=\sum_s Z^s$ for some number $s>0$ and determine the first and second moment of these smaller variables $Z^s$. However, it is not evident how to apply Jansons result to that growing number of variables simultaneously. Instead, we choose to perform a variance analysis along the lines of \cite{RobinsonWormald}. The same approach was pursued in \cite{aco_Wormald}, and thus our proof technique is similar to theirs in flavour, but we have the advantage of only having to deal with a very moderately growing number $s$ of variables, which simplifies matters slightly.

\subsection{Preliminaries and notation}
We always assume that $n\geq n_0$ is large enough for our various estimates to hold and denote by $[n]$ the set $\{1,...,n\}$. 

We use the standard $O$-notation when referring to the limit $n\ra\infty$.
Thus, $f(n)=O(g(n))$ means that there exist $C>0$, $n_0>0$ such that for all $n>n_0$ we have $|f(n)|\leq C\cdot|g(n)|$.
In addition, we use the standard symbols $o(\cdot),\Omega(\cdot),\Theta(\cdot)$.
In particular, $o(1)$ stands for a term that tends to $0$ as $n\ra\infty$.
Furthermore, the notation $f(n)\sim g(n)$ means that $f(n)=g(n)(1+o(1))$ or equivalently $\lim_{n\to\infty} f(n)/g(n)=1$. Besides taking the limit $n\to\infty$, at some point we need to consider the limit $\nu\to\infty$ for some number $\nu \in \mathbb N$. Thus, we introduce $f(n,\nu)\sim_{\nu} g(n,\nu)$ meaning that $\lim_{\nu\to\infty}\lim_{n\to\infty} f(n,\nu)/g(n,\nu)=1$.

If $p=(p_1,\ldots,p_l)$ is a vector with entries $p_i\geq0$, then we let
	$$\cH(p)=-\sum_{i=1}^lp_i\ln p_i.$$
Here and throughout, we use the convention that $0\ln0=0$.
Hence, if $\sum_{i=1}^lp_i=1$, then $\cH(p)$ is the entropy of the probability distribution $p$.
Further, for a number $x$ and an integer $h>0$ we let $(x)_h=x(x-1)\cdots(x-h+1)$ denote the $h$th falling factorial of $x$.

For the sake of simplicity we choose to prove \Thm~\ref{Thm_main} using the random hypergraph model $\hnm$.
This is a random $k$-uniform (multi-)hypergraph on the vertex set $[n]$ obtained by choosing $m$ hyperedges $\vec e_1,\ldots,\vec e_m$ of the complete hypergraph on $n$ vertices
uniformly and independently at random (i.e., with replacement). In this model we may choose the same edge more than once, however, the following statement shows that this is quite unlikely.
 
\begin{fact}\label{Fact_doubleEdge}
Assume that $m=m(n)$ is a sequence such that $m=O(n)$ and let $\cA$ be the event that $\hnm$ has no multiple edges. Then $\pr\brk{\neg\cA}=O(n^{2-k})$.
\end{fact}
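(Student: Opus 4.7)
The plan is to use a straightforward union bound (a birthday-type argument). In the model $\hnm$, the $m$ hyperedges $\vec e_1,\dots,\vec e_m$ are drawn independently and uniformly from the collection of all $\binom{n}{k}$ $k$-subsets of $[n]$. For any fixed pair of indices $1\le i<j\le m$, the probability that $\vec e_i = \vec e_j$ is exactly $1/\binom{n}{k}$ by independence and uniformity. The event $\neg\cA$ (that some pair of hyperedges coincides) is the union over the $\binom{m}{2}$ ordered pairs of these pairwise-collision events, so by the union bound
\[
\pr[\neg\cA]\ \le\ \binom{m}{2}\Big/\binom{n}{k}.
\]

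From here I would just plug in the asymptotics. Since $m = O(n)$, we have $\binom{m}{2} = O(n^2)$, and since $k\ge 3$ is fixed, $\binom{n}{k} = \Theta(n^k)$. Combining these,
\[
\pr[\neg\cA]\ =\ O\!\br{\frac{n^2}{n^k}}\ =\ O(n^{2-k}),
\]
which is exactly the claim. There is really no obstacle here — the only thing to be slightly careful about is that we are bounding the probability of \emph{any} collision between two hyperedge slots (which is what produces a multi-edge in the multi-hypergraph sense), and a union bound over $\binom{m}{2}$ such pairs is sharp enough because each pair contributes only $1/\binom{n}{k}$. Since $k\ge 3$ is assumed throughout the paper, the resulting bound $O(n^{2-k})$ tends to $0$, so multi-edges can safely be ignored when transferring statements between $\hknm$ and $\hnm$.
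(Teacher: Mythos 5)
Your union bound argument is correct and is exactly the standard computation this fact rests on (the paper itself states it without proof, treating it as elementary): each of the $\binom{m}{2}$ pairs of independently, uniformly chosen edges collides with probability $1/\binom{n}{k}$, giving $O(n^2/n^k)=O(n^{2-k})$. The only cosmetic slip is calling the $\binom{m}{2}$ index pairs ``ordered''; they are of course the unordered pairs $1\le i<j\le m$, which is what your bound uses.
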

\noindent All statements and all proofs are valid for any $k\ge 3$. 

\section{Outline of the proof}\label{Sec_outline}

We classify the $2$-colourings according to their proportion of assigned colours: For a map $\sigma:[n]\to \{0,1\}$ we define 
\begin{align}\label{rhosigma}
\rho(\sigma)= | \sigma^{-1}(0) |/n
\end{align}
and call this value the {\em colour density} of $\sigma$. We let $\cA(n)$ signify the set of all possible colour densities $\rho(\sigma)$ for $\sigma:\brk n\ra \{0,1\}$. We will later show that when bounding the moments of $Z(\hnm)$ we can confine ourselves to colourings such that the proportion of the two colours does not deviate too much from 1/2. 
Formally, we say that $\rho\in[0,1]$ is {\em $(\w,n)$-balanced} for $\w\in\mathbb N$ if
\begin{align*}
\rho\in \left[\frac 12- \frac{\w}{\sqrt{n}},\frac 12+ \frac{\w}{\sqrt{n}}\right)
\end{align*}
and we denote by $\Aw$ the set of all $(\w,n)$-balanced colour densities $\rho\in\cA(n)$. For a hypergraph $H$ on $[n]$ we let $\Zw(H)$ signify the number of {\em $(\w,n)$-balanced} colourings, which are 2-colourings $\sigma$ such that $\rho(\sigma)\in \Aw$. 
As we will see, it will turn out useful to split up the set $\Aw$ into smaller sets in the following way. For $\nu \in \mathbb N$ and $s\in \brk{\w\nu}$ let
\begin{align}\label{rwni}
\rwni=\frac 12-\frac{\w}{\sqrt n}+\frac {2s-1}{\nu\sqrt n}.
\end{align}
Let $\Awni$ be the set of all colour densities $\rho\in\cA(n)$ such that
\begin{align*}
\rho\in \left[\rwni- \frac{1}{\nu\sqrt{n}},\rwni+ \frac{1}{\nu\sqrt{n}}\right).
\end{align*}
For a hypergraph $H$ let $\Zwni(H)$ denote the number of 2-colourings $\sigma$ of $H$ such that $\rho(\sigma)\in\Awni$. We are going to apply small subgraph conditioning to $\Zwni$ rather than directly to $Z$. We observe that for each fixed $\nu$ we have $\Zw=\sum_{s=1}^{\w\nu} \Zwni$. In \Sec~\ref{Sec_first_moment} we will calculate the first moments of $Z$ and $\Zw$ to obtain the following.

\begin{proposition} \label{Prop_first_moment}
	Let $k\geq 3, d^{\prime}\in (0, \infty)$ and $\omega>0$.
	Then
	$$
	\Erw\brk{Z(\hnm)}=\Theta\br{2^n\br{1-2^{1-k}}^m}\quad\mbox{and}\quad
	\lim_{\w\to\infty}\liminf_{n\to\infty}\frac{ \Erw\brk{\Zw(\hnm)}}{\Erw\brk{Z(\hnm)}}=1.$$
\end{proposition}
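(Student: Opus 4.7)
The plan is to compute $\Erw\brk{Z(\hnm)}$ directly by sorting maps $\sigma:[n]\to\{0,1\}$ by colour density. For $\sigma$ with $\rho(\sigma)=\rho$, the $m$ hyperedges of $\hnm$ are drawn independently and uniformly with replacement, so $\sigma$ is a 2-colouring with probability $p(\rho)^m$, where
\[
p(\rho)\;=\;1-\frac{\bink{\rho n}{k}+\bink{(1-\rho)n}{k}}{\bink{n}{k}}.
\]
Counting maps of a given density gives
\[
\Erw\brk{Z(\hnm)}\;=\;\sum_{\rho\in\cA(n)}\bink{n}{\rho n}\,p(\rho)^m.
\]

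I would then apply Laplace's method. By Stirling, $\bink{n}{\rho n}=(1+O(1/n))\exp(n\cH(\rho,1-\rho))/\sqrt{2\pi n\rho(1-\rho)}$. Setting
\[
\phi(\rho)\;=\;\cH(\rho,1-\rho)+\frac{d}{k}\ln\bigl(1-\rho^k-(1-\rho)^k\bigr),
\]
each summand equals $\sqrt{2/(\pi n\rho(1-\rho))}\exp(n\phi(\rho))(1+O(1/n))$. One computes $\phi'(1/2)=0$ by symmetry and
\[
\phi''(1/2)\;=\;-4-\frac{d(k-1)\,2^{3-k}}{1-2^{1-k}}\;<\;0.
\]
Moreover both $\cH(\rho,1-\rho)$ and $\ln(1-\rho^k-(1-\rho)^k)$ are strictly concave on $(0,1)$ and symmetric about $1/2$, so $\phi$ attains its unique global maximum at $\rho=1/2$, with $e^{n\phi(1/2)}=2^n(1-2^{1-k})^m$.

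Next I would split the sum into three ranges: (i) $|\rho-1/2|>\eps$, (ii) $\w/\sqrt n\le|\rho-1/2|\le\eps$, and (iii) $|\rho-1/2|<\w/\sqrt n$. In range (i), the uniform bound $\phi(\rho)\le\phi(1/2)-\delta(\eps)$ yields a contribution of order $e^{-\Omega(n)}\cdot 2^n(1-2^{1-k})^m$. In range (iii), substituting $\rho=1/2+y/\sqrt n$ and Taylor-expanding around $1/2$ gives
\[
\bink{n}{\rho n}\,p(\rho)^m\;=\;\sqrt{\tfrac{2}{\pi n}}\cdot 2^n(1-2^{1-k})^m\cdot e^{-\alpha y^2}(1+o(1))
\]
uniformly for $|y|\le\w$, where $\alpha=-\phi''(1/2)/2>0$. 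Since $\cA(n)$ has grid spacing $1/n$ (i.e.\ $y$-spacing $1/\sqrt n$), the Riemann-sum approximation yields
\[
\sum_{|\rho-1/2|<\w/\sqrt n}\bink{n}{\rho n}\,p(\rho)^m\;=\;\sqrt{\tfrac{2}{\pi}}\cdot 2^n(1-2^{1-k})^m\cdot\Bigl(\int_{-\w}^{\w}e^{-\alpha y^2}\,dy+o(1)\Bigr).
\]
An analogous Gaussian tail estimate for range (ii) bounds its contribution by $O(e^{-\alpha\w^2})\cdot 2^n(1-2^{1-k})^m$, which vanishes as $\w\to\infty$. Summing the three ranges gives $\Erw\brk{Z(\hnm)}=\Theta(2^n(1-2^{1-k})^m)$, and
\[
\lim_{\w\to\infty}\liminf_{n\to\infty}\frac{\Erw\brk{\Zw(\hnm)}}{\Erw\brk{Z(\hnm)}}\;=\;\lim_{\w\to\infty}\frac{\int_{-\w}^{\w}e^{-\alpha y^2}dy}{\int_\RR e^{-\alpha y^2}dy}\;=\;1.
\]

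The main technical obstacle is controlling the Stirling error uniformly over range (iii) so that $\bink{n}{\rho n}p(\rho)^m$ really does equal its claimed Gaussian asymptotic times $(1+o(1))$ uniformly for $|y|\le\w$; this amounts to expanding $\cH$ and $\ln p$ around $\rho=1/2$ to third order and checking that the cubic contribution to $n\phi(1/2+y/\sqrt n)$ is $O(y^3/\sqrt n)=o(1)$ there. Once that uniform asymptotic is in hand, the large-deviation estimate for range (i) and the Gaussian tail bound for range (ii) are routine.
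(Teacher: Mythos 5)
Your proposal follows essentially the same route as the paper: the paper's Lemma~\ref{Lem_first_moment_balanced} is exactly your per-density computation of $\binom{n}{\rho n}p(\rho)^m$ via Stirling, and its Corollary~\ref{Cor_first_moment_total} is your Laplace/Gaussian-summation step with the quadratic expansion of $f_1$ around $\bar\rho=1/2$ (your $-\phi''(1/2)$ agrees with the paper's $B(d,k)=4\br{1+\frac{d(k-1)}{2^{k-1}-1}}$), the splitting of the sum according to $|\rho-1/2|$, and the exponential tail bounds; the proposition then follows since $\exp\brk{nf_1(\bar\rho)}=2^n(1-2^{1-k})^m$.

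One inaccuracy to flag: the claim that each summand equals $\sqrt{2/(\pi n\rho(1-\rho))}\exp\br{n\phi(\rho)}(1+O(1/n))$ is not literally true. Since $\binom{\rho n}{k}/\binom nk=\rho^k+\Theta(1/n)$, the quantity $p(\rho)$ differs from $1-\rho^k-(1-\rho)^k$ by a $\Theta(1/n)$ term, and raising to the power $m=\Theta(n)$ converts this into a multiplicative \emph{constant}, which near $\rho=1/2$ is $\exp\brk{\frac{d(k-1)}{2^k-2}}$; the paper computes it explicitly in Lemma~\ref{Lem_first_moment_balanced} and carries it through Corollary~\ref{Cor_first_moment_total}. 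This slip does not invalidate your proof of the present proposition: the factor is uniformly $\Theta(1)$ in $\rho$, so the $\Theta$-statement is unaffected, and it is asymptotically the same constant throughout the central window $|\rho-1/2|\le\w/\sqrt n$, so it cancels in the ratio $\Erw\brk{\Zw(\hnm)}/\Erw\brk{Z(\hnm)}$. But your displayed asymptotics in range (iii) are off by exactly this constant, and it is precisely the constant that the paper's later, sharper arguments (Proposition~\ref{Prop_ratio_second_first}) rely on, so it should be tracked rather than absorbed into $(1+o(1))$.
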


As outlined in \Sec~\ref{Sec_discussion}, our basic strategy is to show that the fluctuations
of $\ln Z$ can be attributed to fluctuations in the number of cycles of a bounded length.
Hence, for an integer $l\geq 2$ we let $C_{l,n}$ denote the number of cycles of length (exactly) $l$ in $\hnm$.
Let
	\begin{equation}\label{eq_lambdadelta}
	\lambda_l=\frac{[d(k-1)]^l}{2l}\quad\mbox{ and }\quad\delta_l=\frac{(-1)^l}{\br{2^{k-1}-1}^l}.
	\end{equation}
We will see that $\lambda_l$ denotes the expected number of cycles of length $l$ in a random $k$-uniform hypergraph, whereas $\delta_l$ is a correction factor taking into account that we only allow for bichromatic edges. 
It is well-known that $C_{2,n},\ldots$ are asymptotically independent Poisson variables \cite[\Thm~5.16]{Bollobas}.
More precisely, we have the following.

\begin{fact}\label{Fact_cycles}
If $c_2,\ldots,c_L$ are non-negative integers, then
	$$\lim_{n\ra\infty}\pr\brk{\forall 2\leq l\leq L:C_{l,n}=c_l}=\prod_{l=2}^L\pr\brk{\Po(\lambda_l)=c_l}.$$
\end{fact}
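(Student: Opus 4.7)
The plan is to prove Fact~\ref{Fact_cycles} by the classical method of factorial moments for Poisson convergence, adapted from the random graph case of \cite[Thm.~5.16]{Bollobas}. By the standard criterion, it suffices to show that for any fixed non-negative integers $j_2,\dots,j_L$,
\begin{equation*}
\Erw\brk{\prod_{l=2}^{L}(C_{l,n})_{j_l}}\;\longrightarrow\;\prod_{l=2}^{L}\lambda_l^{j_l}\qquad\text{as }n\to\infty,
\end{equation*}
since convergence of all joint factorial moments to those of independent Poissons implies joint convergence in distribution to independent $\Po(\lambda_l)$.

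First I would verify the value of $\lambda_l$ by a direct single-cycle computation. A cycle of length $l$ consists of $l$ distinct hyperedges $e_1,\dots,e_l$ together with $l$ distinct ``contact'' vertices $v_1,\dots,v_l$ satisfying $\{v_i,v_{i+1}\}\subseteq e_i$ (indices mod $l$), while the remaining $k-2$ vertices of each $e_i$ may lie anywhere outside the contact set. Placing the contact cycle in $(n)_l/(2l)\sim n^l/(2l)$ ways (the $2l$ accounts for rotation and reflection), and using that in $\hnm$ a uniformly random hyperedge contains any prescribed pair of vertices with probability $\bink{n-2}{k-2}/\bink{n}{k}\sim k(k-1)/n^2$, the independence of the $m=dn/k$ sampled hyperedges yields
\begin{equation*}
\Erw\brk{C_{l,n}}\;\sim\;\frac{n^l}{2l}\cdot (m)_l\cdot\br{\frac{k(k-1)}{n^2}}^{l}\;=\;\frac{[d(k-1)]^l}{2l}\;=\;\lambda_l.
\end{equation*}
For the joint factorial moment I would count ordered tuples of pairwise distinct cycles of the prescribed lengths in $\hnm$, stratified by the combinatorial type of the overlap between their contact vertices and their hyperedges. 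Because the $m$ hyperedges are drawn independently with replacement, the contribution of pairwise vertex- and edge-disjoint tuples factorises as $\prod_{l}\Erw[C_{l,n}]^{j_l}(1+o(1))$, which tends to $\prod_l\lambda_l^{j_l}$ by the first-moment calculation. Configurations with a shared contact vertex pick up an extra factor of $n^{-1}$, and configurations with a shared hyperedge an extra factor of $n^{2-k}$ (whose negligibility is guaranteed by Fact~\ref{Fact_doubleEdge}); summing over the finitely many overlap types, whose number depends only on $L$ and the $j_l$, shows the non-disjoint contribution is $o(1)$.

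The main obstacle is the bookkeeping in this overlap enumeration. Beyond the pure graph case in \cite{Bollobas}, one has to track not only identifications of contact vertices but also incidences between the $k-2$ ``free'' vertices of one hyperedge and the contact or free vertices of another, and verify that each such identification strictly decreases the exponent of $n$ in the leading term. Since the total number of overlap patterns is bounded in terms of $L$ and the $j_l$, this reduces to a finite (if slightly tedious) case analysis, after which the method-of-moments conclusion is immediate.
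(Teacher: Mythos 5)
Your proposal is correct in substance, but note that the paper does not actually prove Fact~\ref{Fact_cycles}: it treats it as known and cites \cite[Thm.~5.16]{Bollobas}. What you write out is exactly the standard factorial-moment argument underlying that citation, and it is also the route the paper itself takes for the conditioned analogue, \Lem~\ref{Lem_planted_cycles}, where the joint factorial moments are computed given $\cV(\sigma)$, the tuples are split into ``good'' and overlapping configurations (Claim~\ref{Claim_overlapCycles} bounds the latter by $O(n^{-1})$), and the conclusion is drawn from \cite[Thm.~1.23]{Bollobas}. So your argument is essentially the same machinery, just in the simpler unconditioned setting; your first-moment computation $\frac{n^l}{2l}(m)_l\br{\binom{n-2}{k-2}/\binom nk}^l\sim\lambda_l$ is right. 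One small bookkeeping slip: a hyperedge shared between two cycles is not the same event as a repeated edge of the multi-hypergraph, so Fact~\ref{Fact_doubleEdge} and the exponent $n^{2-k}$ are not the right justification there. The correct accounting is that forcing one edge slot to contain two prescribed contact pairs replaces a factor $(m)_2\br{n^{-2}}^2\asymp n^{-2}$ by $m\,n^{-4}\asymp n^{-3}$ (and is impossible for small $k$ unless the pairs intersect), so each edge identification, like each vertex identification, strictly lowers the power of $n$; since the number of overlap types is bounded in terms of $L$ and the $j_l$, the non-disjoint contribution is $o(1)$ as you claim. With that correction the proof is complete and consistent with the paper's framework.
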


Next, we investigate the impact of the cycle counts $C_{l,n}$ on the first moment of $\Zwni$. In \Sec~\ref{Sec_short_cycles} we prove the following.

\begin{proposition}\label{Prop_ratio_first_cond}
Assume that $k\geq3$ and $d^{\prime}\in(0,\infty)$. 
Then
	\begin{equation}\label{eq_lambda_delta_conv}
	\sum_{l=2}^\infty\lambda_l\delta_l^2<\infty.
	\end{equation}
Moreover, let $\w,\nu \in \mathbb N$. If $c_2,\ldots, c_L$ are non-negative integers, then for any $s\in\brk{\w\nu}$:
\begin{align}\label{eq_prop_ratio_first_cond}
\frac{\Erw\brk{\Zwni(\hnm)|\forall 2\leq l\leq L:C_{l,n}=c_l}}{\Erw\brk{\Zwni(\hnm)}}\sim \prod_{l=2}^L\brk{1+\delta_l}^{c_l}\exp\brk{-\delta_l\lambda_l}.
\end{align}
\end{proposition}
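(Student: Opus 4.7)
The convergence \eqref{eq_lambda_delta_conv} follows by direct substitution:
$$\lambda_l\delta_l^2=\frac{1}{2l}\brk{\frac{d(k-1)}{(2^{k-1}-1)^2}}^l,$$
a geometric-type series whose base is $O(k^2/2^k)<1$ throughout the regime of \Thm~\ref{Thm_main}, so the sum converges.

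For \eqref{eq_prop_ratio_first_cond} the plan is to decompose $\Zwni=\sum_{\sigma:\rho(\sigma)\in\Awni}\mathbbm{1}\cbc{\sigma\text{ is a 2-col.\ of }\hnm}$ and apply Bayes' rule to each summand, which gives
\begin{align*}
\frac{\Erw\brk{\Zwni\mid\forall l\leq L: C_{l,n}=c_l}}{\Erw\brk{\Zwni}} = \sum_{\sigma:\rho(\sigma)\in\Awni}\frac{\pr\brk{\sigma\text{ is a 2-col.}}}{\Erw\brk{\Zwni}}\cdot\frac{\pr\brk{\forall l: C_{l,n}=c_l\mid\sigma\text{ is a 2-col.}}}{\pr\brk{\forall l: C_{l,n}=c_l}}.
\end{align*}
Fact~\ref{Fact_cycles} controls the denominator, and the coefficients $\pr\brk{\sigma\text{ is a 2-col.}}/\Erw\brk{\Zwni}$ form a probability distribution over $\sigma\in\Awni$. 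Hence it suffices to show that each Bayes ratio converges to the same limit, uniformly in $\sigma\in\Awni$.

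The key input is a transfer matrix computation producing the bichromaticity correction $1+\delta_l$. Fix a loose $l$-cycle with $l$ junction vertices and $k-2$ private vertices per edge, under a uniformly random balanced colouring. For junction colours $a,b\in\{0,1\}$ the edge with these junctions is non-monochromatic with probability $1-2^{2-k}\mathbbm{1}\cbc{a=b}$, so the probability that every edge of the cycle is bichromatic equals $\mathrm{tr}(M^l)$ where $M_{a,b}=\tfrac12\br{1-2^{2-k}\mathbbm{1}\cbc{a=b}}$. The eigenvalues of $M$ are $(2^{k-1}-1)/2^{k-1}$ and $-1/2^{k-1}$, so
$$\mathrm{tr}(M^l)=\frac{(2^{k-1}-1)^l+(-1)^l}{2^{l(k-1)}}=\bc{\frac{2^{k-1}-1}{2^{k-1}}}^l\br{1+\delta_l}.$$
Dividing by the `independence' baseline $[(2^{k-1}-1)/2^{k-1}]^l$ (what one would obtain if the $l$ edges were bichromatic independently) gives the multiplicative correction $1+\delta_l$, whence $\Erw\brk{C_{l,n}\mid\sigma\text{ is a 2-col.}}=\lambda_l(1+\delta_l)+o(1)$. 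The standard factorial-moment argument, combined with the fact that short cycles in $\hnm$ are vertex-disjoint \whp, upgrades this to joint Poisson convergence of $(C_{l,n})_{l=2}^L$ to independent $\Po(\lambda_l(1+\delta_l))$ variables.

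The main obstacle is verifying this convergence uniformly for $\sigma\in\Awni$: the imbalance $\rho(\sigma)-\tfrac12=O(\w/\sqrt n)$ perturbs both the number of bichromatic $k$-subsets $\sim\binom nk(1-2^{1-k})$ and the entries of the transfer matrix, and one must check that these perturbations contribute only $1+o(1)$ multiplicative factors to all joint factorial moments $\Erw\brk{\prod_l(C_{l,n})_{h_l}\mid\sigma\text{ is a 2-col.}}$ appearing in the argument. Once established, each Bayes ratio tends to the common limit
$$\prod_{l=2}^L\frac{\pr\brk{\Po(\lambda_l(1+\delta_l))=c_l}}{\pr\brk{\Po(\lambda_l)=c_l}}=\prod_{l=2}^L(1+\delta_l)^{c_l}\exp(-\lambda_l\delta_l),$$
which is exactly \eqref{eq_prop_ratio_first_cond}.
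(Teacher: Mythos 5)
Your proposal is correct and follows essentially the same route as the paper: Bayes' rule reduces \eqref{eq_prop_ratio_first_cond} to showing that, conditioned on $\cV(\sigma)$ for (nearly) balanced $\sigma$, the short cycle counts are asymptotically independent $\Po(\lambda_l(1+\delta_l))$ variables, which the paper proves via joint factorial moments (\Lem~\ref{Lem_planted_cycles}), with overlapping cycle sequences shown negligible in Claim~\ref{Claim_overlapCycles} and uniformity over balanced $\sigma$ obtained by carrying out the count for an arbitrary $\sigma$ with $\rho(\sigma)\in\Aw$. Your transfer-matrix trace $\mathrm{tr}(M^l)$ is simply a repackaging of the paper's binomial-theorem count of bichromatic completions, and the remaining steps you flag (uniform control of the $O(\w/\sqrt n)$ imbalance and of non-vertex-disjoint cycle configurations in the conditional factorial moments) are exactly the ones the paper's proof executes.
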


Additionally, we need to know the second moment of $\Zwni$ very precisely. The following proposition is the key result of our approach and the one that requires the most technical work. Its proof can be found at the end of \Sec~\ref{Sec_second_moment}.


\begin{proposition}\label{Prop_ratio_second_first}
	Assume that $k \geq 3$ and $d^{\prime}/k < 2^{k-1}\ln 2-2$ and let $\w,\nu \in \mathbb N$. Then for every $s\in\brk{\w\nu}$ we have
	$$\frac{ \Erw \brk{\Zwni(\hnm)^2}}{\Erw \brk{\Zwni(\hnm)}^2} \sim_{\nu} \exp \brk{\sum_{l \geq 2} \lambda_l \delta_l^2}.  $$
\end{proposition}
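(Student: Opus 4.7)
The plan is to expand $\Erw\brk{\Zwni(\hnm)^2}$ as a double sum over ordered pairs of maps $(\sigma,\tau)$ with $\rho(\sigma),\rho(\tau)\in\Awni$, organised according to their joint overlap matrix $\rho_{ab}=|\sigma^{-1}(a)\cap\tau^{-1}(b)|/n$ for $a,b\in\{0,1\}$. The row and column sums of this $2\times 2$ stochastic matrix are pinned in a window of width $O(1/(\nu\sqrt n))$ by the requirement that both marginals lie in $\Awni$, so exactly one entry (say $\rho_{00}$) is a free parameter. Standard multinomial bookkeeping counts the pairs with a given overlap type, and the probability that a uniform random hyperedge of the complete hypergraph is bichromatic under both $\sigma$ and $\tau$ is a polynomial $f(\rho)$ in the entries of $(\rho_{ab})$; independence of the $m$ hyperedges yields a factor $f(\rho)^m$. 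After Stirling, the sum reduces to an essentially one-dimensional integral of the form $\int\exp(n\Phi(\rho_{00}))\,\mathrm d\rho_{00}$ against a polynomial pre-factor, where $\Phi$ combines the entropy of the overlap distribution with the energetic term $\tfrac{d}{k}\ln f$.

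The next step is a Laplace analysis. I would verify that $\Phi$ admits a unique global maximum on the feasible segment at the ``product'' overlap $\rho^\star_{ab}\approx 1/4$, corresponding to $\sigma$ and $\tau$ being asymptotically uncorrelated, and that $\Phi''(\rho^\star)<0$. This is the step where the hypothesis $d'/k\leq 2^{k-1}\ln 2-2$ is crucial: it places the density strictly below the second-moment threshold $\dsec$ of Achlioptas and Moore \cite{Achlioptas}, whose analysis already eliminates any competing local maximum of $\Phi$. A Gaussian expansion around $\rho^\star$ then yields $\Erw\brk{\Zwni(\hnm)^2}\sim A\,\exp(n\Phi(\rho^\star))$ for an explicit pre-factor $A$.

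Dividing by the square of $\Erw\brk{\Zwni(\hnm)}$ (computed analogously to \Prop~\ref{Prop_first_moment} but with the marginal pinned inside $\Awni$ via a one-dimensional Gaussian approximation), the dominant exponential contributions cancel because $\Phi(\rho^\star)=2\Phi_1(\rwni)$ where $\Phi_1$ denotes the first-moment exponent, and so do the width-of-strip normalisations $1/(\nu\sqrt n)$. The surviving ratio is a pure quotient of Gaussian Jacobians and evaluates to $(1-x)^{-1/2}\exp(-x/2)$ with $x=d(k-1)/(2^{k-1}-1)^2<1$ by the upper bound on $d'/k$. The Taylor identity $-\tfrac12\ln(1-x)-\tfrac x2=\sum_{l\geq 2}x^l/(2l)$, combined with $\lambda_l\delta_l^2=x^l/(2l)$ from (\ref{eq_lambdadelta}), identifies this with $\exp(\sum_{l\geq 2}\lambda_l\delta_l^2)$; the $\sim_\nu$ symbol absorbs the $O(1/\nu)$ discretisation error from treating $\Awni$ as a single point.

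The main obstacle is carrying out the Laplace expansion precisely enough to identify the constant as \emph{exactly} $\exp(\sum_l\lambda_l\delta_l^2)$, and not merely up to some unidentified multiple. This is precisely why the proof works with $\Zwni$ rather than with $Z$ or $\Zw$: the narrow strip freezes both marginals and reduces the overlap integration to a single direction, so the quotient becomes a one-dimensional Jacobian determinant that is explicitly computable. Two subsidiary technical points --- showing that overlaps far from $\rho^\star$ contribute negligibly (absorbed into the Achlioptas--Moore bound) and passing from $\hnm$ to $\hknm$ via Fact~\ref{Fact_doubleEdge} --- are handled separately.
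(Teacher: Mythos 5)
Your overall architecture is the same as the paper's: pin both marginals in the $\Awni$-strip so that the overlap has a single free direction, show via the Achlioptas--Moore function that overlaps far from the product point $\bar\rho$ are exponentially negligible (this is where $d'/k\le 2^{k-1}\ln 2-2$ enters, exactly as in the paper via \Lem~\ref{Lem_barf} and \cite[Lemma 4.11]{Bapst}), and finish with a one-dimensional Laplace/Gaussian computation around the product overlap. However, there is a genuine gap in the step where you identify the constant. You treat the per-edge probability of being bichromatic under both $\sigma$ and $\tau$ as \emph{exactly} a polynomial $f(\rho)$ in the overlap entries, and you then assert that after the exponential rates cancel the surviving ratio is ``a pure quotient of Gaussian Jacobians'' equal to $(1-x)^{-1/2}\exp(-x/2)$ with $x=d(k-1)/(2^{k-1}-1)^2$. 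A quotient of Gaussian Jacobians can only produce algebraic (square-root of determinant) factors; carried out as described, your computation yields only $(1-x)^{-1/2}=\exp\brk{\sum_{l\ge1}x^l/(2l)}$, which differs from the target $\exp\brk{\sum_{l\ge2}\lambda_l\delta_l^2}=\exp\brk{-\tfrac12\ln(1-x)-\tfrac x2}$ by the factor $\exp(x/2)$, i.e.\ it spuriously includes an ``$l=1$ cycle'' term. Your plan supplies no mechanism that could generate the missing $\exp(-x/2)$.

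That factor comes precisely from what you approximated away: the $\Theta(n^{k-1})$ corrections in the forbidden-edge counts, i.e.\ the discrepancy between $\binom{\rho n}{k}$ and $N\rho^k$ (and its two-colouring analogue), which upon raising the per-edge probability to the power $m=\Theta(n)$ produce constant-order factors. In the paper these appear as $\exp\brk{\tfrac{d(k-1)}{2^k-2}}$ in the first moment (\Lem~\ref{Lem_first_moment_balanced}) and $\exp\brk{\tfrac{d(k-1)}2\tfrac{2^k-3}{(2^{k-1}-1)^2}}$ in the second moment (\Lem~\ref{Lem_sm_rho}, entering \Prop~\ref{Prop_second_moment_balanced_exact} through $C_n(d,k)$), and it is exactly the difference of these two subexponential constants, namely
\begin{equation*}
\exp\brk{\frac{d(k-1)}2\br{\frac{2^k-3}{(2^{k-1}-1)^2}-\frac{2}{2^{k-1}-1}}}=\exp\brk{-\frac{d(k-1)}{2(2^{k-1}-1)^2}}=\exp\brk{-x/2},
\end{equation*}
that corrects the Gaussian factor $(1-x)^{-1/2}$ to the stated $\exp\brk{\sum_{l\ge2}\lambda_l\delta_l^2}$. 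So to make your argument complete you must expand $1-\mathcal F(\sigma,\tau)/N$ (and the analogous first-moment quantity) to order $1/n$ before exponentiating with $m$, rather than replacing it by the leading polynomial; your final numerical answer is right, but as written the derivation of the constant would come out wrong.
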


\noindent We now derive \Thm~\ref{Thm_main} from \Prop s~\ref{Prop_first_moment}-\ref{Prop_ratio_second_first}. The key observation we will need is that the variance of the random variables $\Zwni$ can almost entirely be attributed to the fluctuations of the number of short cycles. As done in \cite{aco_Wormald}, the arguments we use are similar to the small subgraph conditioning from \cite{Janson, RobinsonWormald}. But we do not refer to any technical statements from \cite{Janson, RobinsonWormald} directly because instead of working only with the random variable $Z$ we need to control all $\Zwni$ for fixed $\w,\nu \in \mathbb N$ simultaneously. In fact, ultimately we have to take $\nu\to\infty$ and $\w\to\infty$ as well.
Our line of argument follows the path beaten in \cite{aco_Wormald} and the following three lemmas are an adaption of the ones there.

For $L>2$ let $\cF_{L}=\cF_{L,n}(d,k)$ be the $\sigma$-algebra generated by the random variables $C_{l,n}$ with $2\le l\le L$.
For each $L \ge 2$ the standard decomposition of the variance yields
$$\Var\brk{\Zwni(\hnm)}=\Var\brk{\Erw\brk{\Zwni(\hnm)|\cF_L}}+\Erw\brk{\Var\brk{\Zwni(\hnm)|\cF_L}}.$$
The term $\Var\brk{\Erw\brk{\Zwni(\hnm)|\cF_L}}$ accounts for the amount of variance induced by the fluctuations of the number of cycles of length at most $L$. The strategy when using small subgraph conditioning is to bound the second summand, which is the expected conditional variance
$$\Erw\brk{\Var\brk{\Zwni(\hnm)|\cF_L}}=\Erw\brk{\Erw\brk{{\Zwni(\hnm)}^2|\cF_L}-\Erw\brk{\Zwni(\hnm)|\cF_L}^2}.$$
In the following lemma we show that in fact in the limit of large $L$ and $n$ this quantity is negligible. This implies that conditioned on the number of short cycles the variance vanishes and thus the limiting distribution of $\ln \Zwni$ is just the limit of $\ln\Erw\brk{\Zwni|\cF_L}$ as $n,L \to \infty$. This limit is determined by the joint distribution of the number of short cycles.
\begin{lemma}\label{Lem_conc_1}
 For $d^{\prime} \in (0,\infty)$ and any $\w,\nu \in \mathbb N$ and $s\in[2\omega\nu]$ we have
\begin{align*}
 \limsup_{L\to\infty} \limsup_{n\to\infty}\Erw\brk{\frac{\Erw\brk{{\Zwni(\hnm)}^2|\cF_L}-\Erw\brk{\Zwni(\hnm)|\cF_L}^2}{\Erw\brk{\Zwni(\hnm)}^2}}=0.
\end{align*}
\end{lemma}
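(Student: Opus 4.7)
The approach is the standard small subgraph conditioning variance decomposition. Applying the law of total variance to $\Zwni(\hnm)$ with respect to $\cF_L$ rewrites the expression inside the lemma as
\begin{align*}
\frac{\Var\brk{\Zwni(\hnm)}}{\Erw\brk{\Zwni(\hnm)}^2}-\frac{\Var\brk{\Erw\brk{\Zwni(\hnm)|\cF_L}}}{\Erw\brk{\Zwni(\hnm)}^2}.
\end{align*}
By \Prop~\ref{Prop_ratio_second_first}, the first summand converges (in the $\sim_\nu$ sense) to $\exp\br{\sum_{l\ge 2}\lambda_l\delta_l^2}-1$. The plan is therefore to show that the second summand has the same limit after first letting $n\to\infty$ and then $L\to\infty$, so that the two cancel in the appropriate iterated $(L,n,\nu)$-limit.

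For the second summand, \Prop~\ref{Prop_ratio_first_cond} asserts that, conditional on $\set{\forall\,2\le l\le L:\,C_{l,n}=c_l}$,
\begin{align*}
\frac{\Erw\brk{\Zwni(\hnm)|\cF_L}}{\Erw\brk{\Zwni(\hnm)}}=(1+o(1))\prod_{l=2}^L(1+\delta_l)^{c_l}\exp(-\delta_l\lambda_l).
\end{align*}
Setting $Y_{L,n}:=\prod_{l=2}^L(1+\delta_l)^{C_{l,n}}\exp(-\delta_l\lambda_l)$, the task reduces to computing $\lim_n\Erw\brk{Y_{L,n}}$ and $\lim_n\Erw\brk{Y_{L,n}^2}$. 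By Fact~\ref{Fact_cycles} the vector $(C_{l,n})_{2\le l\le L}$ converges jointly in distribution to independent $\Po(\lambda_l)$ variables, and for $X\sim\Po(\lambda)$ the generating-function identity $\Erw\brk{(1+\delta)^{tX}}=\exp\br{\lambda((1+\delta)^t-1)}$ yields, for $t=1$ and $t=2$ respectively,
\begin{align*}
\lim_{n\to\infty}\Erw\brk{Y_{L,n}}=1\quad\text{and}\quad\lim_{n\to\infty}\Erw\brk{Y_{L,n}^2}=\prod_{l=2}^L\exp(\lambda_l\delta_l^2).
\end{align*}
Hence $\lim_n\Var\brk{\Erw\brk{\Zwni(\hnm)|\cF_L}}/\Erw\brk{\Zwni(\hnm)}^2=\prod_{l=2}^L\exp(\lambda_l\delta_l^2)-1$, and invoking the summability~\eqref{eq_lambda_delta_conv} from \Prop~\ref{Prop_ratio_first_cond} to send $L\to\infty$ gives the matching limit $\exp\br{\sum_{l\ge 2}\lambda_l\delta_l^2}-1$.

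The main technical obstacle is converting the pointwise-in-$(c_l)$ asymptotic of \Prop~\ref{Prop_ratio_first_cond}, together with the distributional Poisson convergence in Fact~\ref{Fact_cycles}, into honest convergence of the first and second moments of $Y_{L,n}$. Convergence in distribution of $(C_{l,n})_{l\le L}$ to independent Poissons does not by itself imply convergence of $\Erw\brk{(1+\delta_l)^{tC_{l,n}}}$, so an additional uniform-integrability input is required. This can be supplied by the classical factorial-moment computation for short cycle counts in sparse random (hyper)graphs, which shows that for every fixed $L$ and every $\alpha>1$ the family $\br{\prod_{l\le L}\alpha^{C_{l,n}}}_n$ is uniformly integrable, combined with a truncation onto the event $\set{\forall\,l\le L:\,C_{l,n}\le M_l}$, on which the $1+o(1)$ error in \Prop~\ref{Prop_ratio_first_cond} is genuinely uniform. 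These two ingredients justify the interchange of limit and expectation and deliver the lemma.
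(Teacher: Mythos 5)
There is a genuine gap at the heart of your argument. Your plan hinges on the exact limits $\lim_n\Erw\brk{Y_{L,n}}=1$ and $\lim_n\Erw\brk{Y_{L,n}^2}=\prod_{l\le L}\exp(\lambda_l\delta_l^2)$, justified by the claim that $\br{\prod_{l\le L}\alpha^{C_{l,n}}}_n$ is uniformly integrable for every $\alpha>1$. That claim is false: exponential moments of short cycle counts in a sparse random hypergraph are not bounded in $n$. For example, the event that at least $j$ of the $m$ hyperedges contain one fixed pair of vertices has probability at least of order $\exp\brk{-j\ln n-j\ln j+O(j)}$ and forces $C_{2,n}\ge\binom{j}{2}$; choosing $j$ a sufficiently large constant times $\ln n$ shows $\Erw\brk{\alpha^{C_{2,n}}}\to\infty$ for every fixed $\alpha>1$ (the well-known heavy upper tail of subgraph counts). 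So the family is not even bounded in $L^1$, and in fact $\Erw\brk{Y_{L,n}^2}\to\infty$, so the second-moment convergence you assert cannot hold; convergence of fixed-order factorial moments (Fact~\ref{Fact_cycles}) gives no control over such exponential functionals. A second, related problem is the transfer from $Y_{L,n}$ to $\Erw\brk{\Zwni(\hnm)\,|\,\cF_L}/\Erw\brk{\Zwni(\hnm)}$: \Prop~\ref{Prop_ratio_first_cond} is a pointwise statement for each fixed tuple $(c_2,\dots,c_L)$ and says nothing on the event that some $C_{l,n}$ is atypically large, where the conditional expectation can a priori be enormous, so even a correct uniform-integrability statement about the cycle counts alone would not complete the transfer.

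The gap is avoidable because you are proving more than is needed. The quantity in the lemma is an expected conditional variance, hence nonnegative, and $\Erw\brk{\Erw\brk{\Zwni(\hnm)|\cF_L}^2}\le\Erw\brk{\Zwni(\hnm)^2}$ holds trivially by the variance decomposition; therefore only a \emph{lower} bound of the form $(1-\eps)\,\Erw\brk{\Zwni(\hnm)}^2\exp\brk{\sum_{l=2}^L\lambda_l\delta_l^2}$ on $\Erw\brk{\Erw\brk{\Zwni(\hnm)|\cF_L}^2}$ is required. This follows from exactly the truncation you mention at the end: restrict the sum defining this second moment to tuples with all $c_l\le B$ (all summands are nonnegative, so discarding the rest only decreases it), apply \Prop~\ref{Prop_ratio_first_cond} and Fact~\ref{Fact_cycles} on that bounded set, and take $B$ large. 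Combining this lower bound with \Prop~\ref{Prop_ratio_second_first} through the decomposition $\Erw\brk{Z^2}=\Erw\brk{\Erw\brk{Z^2|\cF_L}-\Erw\brk{Z|\cF_L}^2}+\Erw\brk{\Erw\brk{Z|\cF_L}^2}$ is precisely the paper's proof; the upper-bound direction via $\Erw\brk{Y_{L,n}^2}$ is both unnecessary and, as you have set it up, unprovable.
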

\begin{proof}
Fix $\w, \nu\in \mathbb N$ and set $Z_s=\Zwni(\hnm)$. Using Fact~\ref{Fact_cycles} and equation (\ref{eq_prop_ratio_first_cond}) from \Prop~\ref{Prop_ratio_first_cond} we can choose for any $\eps>0$ a constant $B=B(\eps)$ and $L\ge L_0(\eps)$ large enough such that for each large enough $n\ge n_0(\eps, B, L)$ we have for any $s\in\brk{\w\nu}$:
 \begin{align}\label{eq_lem_conc_1_1}
\nonumber  \Erw\brk{\Erw\brk{Z_s|\cF_L}^2}&\ge \sum_{c_1,...,c_L\le B} \Erw\brk{Z_s|\forall 2 \le l\le L: C_{l,n}=c_l}^2 \pr\brk{\forall 2 \le l\le L: C_{l,n}=c_l}\\
\nonumber   &\ge \exp\brk{-\eps}\Erw\brk{Z_s}^2 \sum_{c_1,...,c_L\le B} \ \prod_{l=2}^L \brk{(1+\delta_l)^{c_l}\exp\brk{-\lambda_l\delta_l}}^2\pr\brk{\Po(\lambda_l)=c_l}\\
\nonumber   &= \exp\brk{-\eps}\Erw\brk{Z_s}^2 \sum_{c_1,...,c_L\le B} \ \prod_{l=2}^L \frac{\brk{(1+\delta_l)^2\lambda_l}^{c_l}}{c_l!\exp\brk{2\lambda_l\delta_l+\lambda_l}}\\
            &\ge \Erw\brk{Z_s}^2 \exp\brk{-2\eps+ \sum_{l=2}^L \delta_l^2\lambda_l}.
 \end{align}
The tower property for conditional expectations and the standard formula for the decomposition of the variance yields
\begin{align*}
 \Erw\brk{Z_s^2}&=\Erw\brk{\Erw\brk{Z_s^2|\cF_L}}=\Erw\brk{\Erw\brk{Z_s^2|\cF_L}-\Erw\brk{Z_s|\cF_L}^2}+ \Erw\brk{\Erw\brk{Z_s|\cF_L}^2}
\end{align*}
and thus, using (\ref{eq_lem_conc_1_1}) we have
\begin{align}\label{eq_lem_conc_1_2}
\frac{\Erw\brk{\Erw\brk{Z_s^2|\cF_L}-\Erw\brk{Z_s|\cF_L}^2}}{\Erw\brk{Z_s}^2 }\le \frac{ \Erw\brk{Z_s^2}}{\Erw\brk{Z_s}^2}- \exp\brk{-2\eps+ \sum_{l=2}^L \delta_l^2\lambda_l}.
\end{align}
Finally, the estimate $\exp[-x]\ge 1-x$ for $|x|<1/8$ combined with (\ref{eq_lem_conc_1_2}) and \Prop~\ref{Prop_ratio_second_first} implies that for large enough $\nu,n,L$ and each $s\in\brk{\w\nu}$ we have
\begin{align*}
 \frac{\Erw\brk{\Erw\brk{Z_s^2|\cF_L}-\Erw\brk{Z_s|\cF_L}^2}}{\Erw\brk{Z_s}^2 }\le 2\eps \exp\brk{\sum_{l=2}^\infty \delta_l^2\lambda_l}.
\end{align*}
As this holds for any $\eps>0$ and by equation (\ref{eq_lambda_delta_conv}) from \Prop~\ref{Prop_ratio_first_cond} the expression $\exp\brk{\sum_{l=2}^\infty \delta_l^2\lambda_l}$ is bounded, the proof of the lemma is completed by first taking $n\to\infty$ and then $L\to\infty$.
\end{proof}

\begin{lemma}\label{Lem_conc_3}
	For $d \in (0,\infty)$ and any $\alpha>0$ we have
	\begin{align*}
	\limsup_{L\to\infty}\limsup_{n\to\infty} \pr\brk{|Z(\hnm)-\Erw\brk{Z(\hnm)|\cF_L}|>\alpha\Erw\brk{Z(\hnm)}}=0.
	\end{align*}
\end{lemma}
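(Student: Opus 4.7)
The plan is to reduce the assertion to a straightforward conditional Chebyshev estimate for each piece $\Zwni$, with \Lem~\ref{Lem_conc_1} supplying the required variance bound. Starting from the decomposition
\begin{equation*}
Z(\hnm)-\Erw\brk{Z(\hnm)\mid\cF_L}=\br{Z(\hnm)-\Zw(\hnm)}-\Erw\brk{Z(\hnm)-\Zw(\hnm)\mid\cF_L}+\br{\Zw(\hnm)-\Erw\brk{\Zw(\hnm)\mid\cF_L}},
\end{equation*}
the three terms will be controlled separately. The first and third are non-negative random variables with common unconditional mean $\Erw\brk{Z(\hnm)-\Zw(\hnm)}$. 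For any $\eta>0$, \Prop~\ref{Prop_first_moment} permits the choice of $\omega=\omega(\eta,\alpha)$ large enough that $\Erw\brk{Z(\hnm)-\Zw(\hnm)}\le \eta\alpha\Erw\brk{Z(\hnm)}$ for all sufficiently large $n$; Markov's inequality then bounds each of the probabilities $\pr\brk{Z-\Zw>(\alpha/3)\Erw\brk{Z}}$ and $\pr\brk{\Erw\brk{Z-\Zw\mid\cF_L}>(\alpha/3)\Erw\brk{Z}}$ by $3\eta$.

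Once $\omega$ has been fixed, it remains to control $\pr\brk{\abs{\Zw-\Erw\brk{\Zw\mid\cF_L}}>(\alpha/3)\Erw\brk{Z}}$. Taking $\nu=1$ for simplicity, I decompose $\Zw=\sum_{s=1}^{\omega}\Zwni$; the triangle inequality together with a union bound over $s\in\brk{\omega}$ reduces the task to showing, for every fixed $s$, that
\begin{equation*}
\pr\brk{\abs{\Zwni-\Erw\brk{\Zwni\mid\cF_L}}>\frac{\alpha}{3\omega}\Erw\brk{Z(\hnm)}}=o(1)\qquad\text{as }n\to\infty\text{ and then }L\to\infty.
\end{equation*}
Applying Chebyshev's inequality conditional on $\cF_L$ and then averaging, this probability is at most $(9\omega^{2}/\alpha^{2})\cdot\Erw\brk{\Var\brk{\Zwni\mid\cF_L}}/\Erw\brk{Z(\hnm)}^{2}$, which in turn is bounded by $(9\omega^{2}/\alpha^{2})\cdot\Erw\brk{\Var\brk{\Zwni\mid\cF_L}}/\Erw\brk{\Zwni}^{2}$ since $\Erw\brk{\Zwni}\le\Erw\brk{Z(\hnm)}$. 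Now \Lem~\ref{Lem_conc_1} forces this ratio to vanish in the claimed limit.

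The main technical point is the bookkeeping of parameters: one fixes $\alpha>0$ and $\eta>0$, then selects the finite $\omega$, then sends $n\to\infty$ followed by $L\to\infty$, and finally lets $\eta\downarrow 0$. Because $\omega$ depends on $(\alpha,\eta)$ alone, the union bound over the $\omega$ values of $s$ only involves finitely many events and therefore survives passage to the limit. No further delicacy arises: the entire combinatorial and analytic difficulty has already been absorbed into \Prop s~\ref{Prop_first_moment} and~\ref{Prop_ratio_second_first} via \Lem~\ref{Lem_conc_1}, and the remaining steps consist only of Markov's and Chebyshev's inequalities applied in the correct order.
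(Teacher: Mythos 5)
Your argument is correct, and it reaches the goal by a genuinely simpler route than the paper for the key estimate. Both proofs start from the same three-term decomposition (unbalanced colourings $Z-\Zw$, the conditional expectation of that difference, and the centred balanced part $\Zw-\Erw\brk{\Zw|\cF_L}$), and both dispose of the first two terms by \Prop~\ref{Prop_first_moment} plus Markov, exactly as you do. The difference is in the middle term: the paper introduces the truncated exceedance variables $X_s=|Z_s-\Erw\brk{Z_s|\cF_L}|\mathbf{1}_{\{|Z_s-\Erw\brk{Z_s|\cF_L}|>\alpha\Erw\brk{Z_s}\}}$, bounds $\Erw\brk{X_s|\cF_L}$ by a dyadic layering of Chebyshev's inequality, and then applies Markov to $X=\sum_sX_s$; this keeps each block's deviation measured against its own mean $\Erw\brk{Z_s}$ and so loses no factor in the number of blocks. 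You instead use a plain union bound over the $\w\nu$ blocks at the cruder threshold $\tfrac{\alpha}{3\w}\Erw\brk{Z}$ and a direct conditional Chebyshev estimate, paying a constant factor of order $\w^2/\alpha^2$ per block. That loss is harmless here precisely because $\w$ (hence the number of blocks) is fixed \emph{before} the limits in $n$ and then $L$ are taken, and because \Lem~\ref{Lem_conc_1} delivers a ratio tending to $0$ rather than merely a bounded one; the paper's more careful bookkeeping would only become necessary if the variance ratio were merely bounded or the number of blocks grew along the limit. One small caveat: you set $\nu=1$ for convenience, which is covered by the statement of \Lem~\ref{Lem_conc_1} ("any $\w,\nu\in\mathbb N$"), but since that lemma's proof runs through \Prop~\ref{Prop_ratio_second_first}, whose asymptotics are taken as $\nu\to\infty$, it is safer to leave $\nu$ arbitrary; your union bound then simply runs over $\w\nu$ blocks and nothing else in the argument changes.
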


\begin{proof}
	To unclutter the notation, we set $Z=Z(\hnm)$ and $\Zw=\Zw(\hnm)$. First we observe that \Prop~\ref{Prop_first_moment} implies that for any $\alpha>0$ we can choose $\w \in \mathbb N$ large enough such that
	\begin{align}\label{eq_lem_conc_3_1}
	\liminf_{n\to\infty} \Erw\brk{\Zw}>(1-\alpha^2)\Erw\brk Z.
	\end{align}
	We let $\nu \in \mathbb N$. To prove the statement, we need to get a handle on the cases where the random variables $\Zwni(\hnm)$ deviate strongly from their conditional expectation $\Erw\brk{\Zwni(\hnm)|\cF_L}$. We let $Z_s=\Zwni(\hnm)$ and define 
	\begin{align*}
	X_s=|Z_s-\Erw\brk{Z_s|\cF_L}|\cdot \mathbf{1}_{\{|Z_s-\Erw\brk{Z_s|\cF_L}|>\alpha \Erw\brk{Z_s}\}}
	\end{align*}
	and $X=\sum_{s=1}^{\w\nu} X_s$. Then these definitions directly yield 
	\begin{align}\label{eq_lem_conc_3_2}
	\pr\brk{X<\alpha  \Erw\brk{\Zw}}\le \pr\brk{\left|\Zw-\Erw\brk{\Zw|\cF_L}\right|<2\alpha \Erw\brk{\Zw}}.
	\end{align}
	By the definition of the $X_s$'s and Chebyshev's inequality it is true for every $s$ that
	\begin{align*}
	\Erw\brk{X_s|\cF_L}&\le \sum_{j\ge 0} 2^{j+1} \alpha \Erw\brk{Z_s}\pr\brk{\left|Z_s-\Erw\brk{Z_s|\cF_L}\right|>2^j\alpha \Erw\brk{Z_s}}\le \frac{4\Var\brk{Z_s|\cF_L}}{\alpha \Erw\brk{Z_s}}.
	\end{align*}
	Hence, using that with \Prop~\ref{Prop_first_moment} there is a number $\beta=\beta(\alpha, \w)$ such that $\Erw\brk{Z_s}/\Erw\brk{Z}\le \beta/(\w\nu)$ for all $s\in [\w\nu]$ and $n$ large enough, we have
	\begin{align*}
	\Erw\brk{X|\cF_L}\le \sum_{s=1}^{\w\nu}\frac{4\Var\brk{Z_s|\cF_L}}{\alpha \Erw\brk{Z_s}}\le  \frac{2\beta \Erw\brk Z}{\alpha\nu\w} \sum_{s=1}^{\w\nu}\frac{\Var\brk{Z_s|\cF_L}}{\Erw\brk{Z_s}^2}.
	\end{align*}
	Taking expectations, choosing $\eps=\eps(\alpha, \beta, \w)$ small enough and applying \Lem~\ref{Lem_conc_1}, we obtain
	\begin{align}\label{eq_lem_conc_3_4}
	\Erw\brk X =\Erw\brk{\Erw\brk{X|\cF_L}} \le \frac{2\beta \Erw\brk Z}{\alpha\nu\w} \sum_{s=1}^{\w\nu}\frac{\Erw\brk{\Var\brk{Z_s|\cF_L}}}{\Erw\brk{Z_s}^2} \le \frac{4\beta\eps\Erw\brk Z}{\alpha} \le \alpha^2\Erw\brk Z.
	\end{align}
	Using (\ref{eq_lem_conc_3_2}), Markov's inequality, (\ref{eq_lem_conc_3_4}) and (\ref{eq_lem_conc_3_1}), it follows that
	\begin{align}\label{eq_lem_conc_3_5}
	\pr\brk{\left|\Zw-\Erw\brk{\Zw|\cF_L}\right|<2\alpha \Erw\brk{\Zw}}\ge 1-2\alpha.
	\end{align}
	
	\noindent Finally, the triangle inequality combined with Markov's inequality and equations (\ref{eq_lem_conc_3_1}) and (\ref{eq_lem_conc_3_5}) yields
	\begin{align*}
	\pr\brk{\left|Z-\Erw\brk{Z|\cF_L}\right|>\alpha\Erw\brk{Z}} & \le\pr\brk{\left|Z-\Zw\right|+\left|\Zw-\Erw\brk{\Zw|\cF_L}\right|+\left|\Erw\brk{\Zw|\cF_L}-\Erw\brk{Z|\cF_L}\right|>\alpha\Erw\brk{Z}}\\
	&\le 3\alpha + \alpha/3 + 3\alpha < 7\alpha,
	\end{align*}
	which proves the statement. 
\end{proof}

\begin{lemma}\label{Lem_conc_2}
	Let 
	\begin{align}\label{eq_lem_conc_2_0}
	U_L&=\sum_{l=2}^L C_{l,n}\ln(1+\delta_l)-\lambda_l\delta_l.
	\end{align}
	Then $\limsup_{L\to\infty} \limsup_{n\to\infty} \Erw\brk{|U_L|}<\infty$ and further for any $\eps>0$ we have
	
	\begin{align}\label{eq_lem_conc_2_1}
	\limsup_{L\to\infty}\limsup_{n\to\infty} \pr\brk{|\ln \Erw\brk{Z(\hnm)|\cF_L}-\ln \Erw\brk{Z(\hnm)}-U_L|>\eps}=0
	\end{align}
\end{lemma}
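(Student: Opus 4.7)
The plan is to prove the two assertions separately, both exploiting Fact~\ref{Fact_cycles} and the summability $\sum_l\lambda_l\delta_l^2<\infty$ established in \eqref{eq_lambda_delta_conv}. For the bounded-expectation claim, I split
$$U_L=\sum_{l=2}^{L}(C_{l,n}-\lambda_l)\ln(1+\delta_l)+\sum_{l=2}^{L}\lambda_l\bigl[\ln(1+\delta_l)-\delta_l\bigr].$$
The deterministic tail is bounded by $\sum_l\lambda_l|\ln(1+\delta_l)-\delta_l|=O(\sum_l\lambda_l\delta_l^2)<\infty$. For the random part, joint Poisson convergence of the $C_{l,n}$ together with convergence of factorial moments yields $\limsup_n \Var\bigl(\sum_l(C_{l,n}-\lambda_l)\ln(1+\delta_l)\bigr)=\sum_l\lambda_l(\ln(1+\delta_l))^2=O(\sum_l\lambda_l\delta_l^2)<\infty$; Cauchy--Schwarz then bounds its $L^1$-norm uniformly in $L$ for large $n$.

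For the convergence in probability, fix $\varepsilon>0$. Using the first part and Markov, choose $K=K(\varepsilon)$ with $\pr[|U_L|>K]<\varepsilon$ uniformly in large $n,L$. Using \Prop~\ref{Prop_first_moment}, choose $\omega=\omega(\varepsilon,K)$ large enough that $\gamma:=\limsup_n(\Erw[Z(\hnm)]-\Erw[\Zw(\hnm)])/\Erw[Z(\hnm)]<\varepsilon^{2}e^{-K}/4$, and fix any $\nu\in\mathbb N$. Using Fact~\ref{Fact_cycles}, pick $B=B(L,\varepsilon)$ so that the event $\cE_{L,B}=\{C_{l,n}\le B\text{ for all }2\le l\le L\}$ has $\pr[\cE_{L,B}]\ge 1-\varepsilon$. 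On $\cE_{L,B}$, \Prop~\ref{Prop_ratio_first_cond} applies uniformly over the finitely many tuples $(c_l)\in\{0,\ldots,B\}^{L-1}$ and indices $s\in[\omega\nu]$, yielding
$$\Erw[\Zwni(\hnm)\mid\cF_L]=(1+o(1))\,\Erw[\Zwni(\hnm)]\prod_{l=2}^{L}(1+\delta_l)^{C_{l,n}}\exp(-\lambda_l\delta_l)\qquad(n\to\infty),$$
with the $o(1)$ uniform in $s$. Summing over $s$ produces $\Erw[\Zw(\hnm)\mid\cF_L]=(1+o(1))\,\Erw[\Zw(\hnm)]\,e^{U_L}$, and combining with $\Erw[\Zw(\hnm)]\ge(1-\gamma+o(1))\,\Erw[Z(\hnm)]$ gives the lower bound $\Erw[Z(\hnm)\mid\cF_L]/\Erw[Z(\hnm)]\ge(1-\gamma+o(1))\,e^{U_L}$.

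For the upper bound I use $\Erw[Z\mid\cF_L]=\Erw[\Zw\mid\cF_L]+\Erw[Z-\Zw\mid\cF_L]$ and observe that $\Erw\bigl[\Erw[Z-\Zw\mid\cF_L]/\Erw[Z]\bigr]=(\Erw[Z]-\Erw[\Zw])/\Erw[Z]\le 2\gamma$ for large $n$, so Markov gives $\Erw[Z-\Zw\mid\cF_L]/\Erw[Z]\le\varepsilon e^{-K}$ with probability at least $1-2\gamma e^{K}/\varepsilon\ge 1-\varepsilon/2$. On the intersection of the three good events $\cE_{L,B}\cap\{|U_L|\le K\}\cap\{\Erw[Z-\Zw\mid\cF_L]/\Erw[Z]\le \varepsilon e^{-K}\}$, which has probability $\ge 1-3\varepsilon$, we have $e^{U_L}\ge e^{-K}$, and plugging in yields $\bigl|\Erw[Z\mid\cF_L]/(\Erw[Z]\,e^{U_L})-1\bigr|=O(\varepsilon)$; taking logarithms delivers \eqref{eq_lem_conc_2_1}. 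The main obstacle is exactly this upper-bound step: a priori, atypical cycle configurations could make unbalanced colourings dominate $\Erw[Z\mid\cF_L]$, and controlling them requires pairing \Prop~\ref{Prop_first_moment} with a conditional Markov bound while carefully fixing $\omega$ (and hence $K$) in terms of $\varepsilon$ before the limits $n\to\infty$ and $L\to\infty$ are taken in that order.
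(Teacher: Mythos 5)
Your proposal is correct and takes essentially the same route as the paper: both parts rest on Fact~\ref{Fact_cycles} and the summability $\sum_l\lambda_l\delta_l^2<\infty$, and for \eqref{eq_lem_conc_2_1} you use the same decomposition $Z=\Zw+(Z-\Zw)$ on a bounded-cycle-count event together with \Prop~\ref{Prop_ratio_first_cond} and \Prop~\ref{Prop_first_moment}. The only deviations are cosmetic: you bound the random part of $U_L$ by a variance/Cauchy--Schwarz estimate where the paper bounds $\Erw\brk{|C_{l,n}-\lambda_l|}$ term by term, and you control $\Erw\brk{Z-\Zw\mid\cF_L}$ via a conditional Markov inequality combined with the a priori bound $|U_L|\le K$, where the paper instead bounds it on each atom $\{C_{l,n}=c_l,\ 2\le l\le L\}$ using that these atoms have probabilities bounded away from zero.
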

\begin{proof}
	In a first step we show that $\Erw\brk{|U_L|}$ is uniformly bounded. As $x-x^2\le\ln(1+x)\le x$ for $|x|\le 1/8$ we have for every $l\le L$: 
	\begin{align*}
	\Erw\brk{\left|C_{l,n}\ln(1+\delta_l)-\lambda_l\delta_l\right|}\le \delta_l\Erw\brk{\left|C_{l,n}-\lambda_l\right|}+\delta_l^2\Erw\brk{C_{l,n}}.
	\end{align*}
	Therefore, Fact~\ref{Fact_cycles} implies that
	\begin{align}\label{eq_lem_conc_2_2}
	\Erw\brk{\left|U_L\right|}\le\sum_{l=2}^L\delta_l\sqrt{\lambda_l}+\delta_l^2\lambda_l.
	\end{align}
	\Prop~\ref{Prop_ratio_first_cond} ensures that $\sum_l \delta_l^2\lambda_l <\infty$. Furthermore, as we are in the regime $d^{\prime}/k\le 2^{k-1} \ln 2$, we have $\sum_l \delta_l\sqrt{\lambda_l}\le \sum_l k^l 2^{-(k-1)l/2} <\infty$ and thus (\ref{eq_lem_conc_2_2}) shows that $\Erw\brk{|U_L|}$ is uniformly bounded. 
	
	To prove (\ref{eq_lem_conc_2_1}), for given $n$ and a constant $B>0$ we let $\cC_B$ be the event that $C_{l,n}<B$ for all $l\le L$. Referring to Fact~\ref{Fact_cycles}, we can find for each $L,\eps>0$ a $B>0$ such that
	\begin{align}\label{eq_lem_conc_2_3}
	\pr\brk{\cC_B}>1-\eps.
	\end{align}
	To simplify the notation we set $Z=Z(\hnm)$ and $\Zw=\Zw(\hnm)$. By \Prop~\ref{Prop_first_moment} we can choose for any $\alpha>0$ a $\w>0$ large enough such that $\Erw\brk{\Zw}>(1-\alpha)\Erw\brk{Z}$ for large enough $n$. Then \Prop s~\ref{Prop_first_moment} and \ref{Prop_ratio_first_cond} combined with Fact~\ref{Fact_cycles} imply that for any $c_1,...,c_L\le B$ and small enough $\alpha=\alpha(\eps, L, B)$ we have for $n$ large enough:
	\begin{align}\label{eq_lem_conc_2_4}
	\nonumber \Erw\brk{Z|\forall 2\leq l\leq L:C_{l,n}=c_l}& \ge \Erw\brk{\Zw|\forall 2\leq l\leq L:C_{l,n}=c_l}\\
	& \ge \exp\brk{-\eps}\Erw\brk{Z} \prod_{l=2}^L (1+\delta_l)^{c_l}\exp\brk{-\delta_l\lambda_l}.
	\end{align} 
On the other hand, for $\alpha$ sufficiently small and large enough $n$ we have
	\begin{align}\label{eq_lem_conc_2_5}
	 \nonumber  \Erw\brk{Z|\forall 2\leq l\leq L:C_{l,n}=c_l}& = \Erw\brk{Z-Z_\w|\forall 2\leq l\leq L:C_{l,n}=c_l} + \Erw\brk{Z_\w|\forall 2\leq l\leq L:C_{l,n}=c_l}\\
	  \nonumber &\le \frac{2\alpha \Erw\brk Z}{\prod_{l=2}^L\pr\brk{\Po(\lambda_l)=c_l}}+\Erw\brk{Z_\w|\forall 2\leq l\leq L:C_{l,n}=c_l}\\
		    & \le \exp\brk{\eps}\Erw\brk{Z} \prod_{l=2}^L (1+\delta_l)^{c_l}\exp\brk{-\delta_l\lambda_l}
	\end{align}
Thus, the proof of (\ref{eq_lem_conc_2_1}) is completed by combining (\ref{eq_lem_conc_2_3}), (\ref{eq_lem_conc_2_4}), (\ref{eq_lem_conc_2_5}) and taking logarithms. 

\end{proof}

\begin{proof}[Proof of Theorem \ref{Thm_main}]
For $L\ge 2$ we define
\begin{align*}
 W_L=\sum_{l=2}^L X_l \ln(1+\delta_l)-\lambda_l\delta_l.
\end{align*}
Then Fact~\ref{Fact_cycles} implies that for each $L$ the random variables $U_L$ defined in (\ref{eq_lem_conc_2_0}) converge in distribution to $W_L$ as $n\to\infty$. Furthermore, because $\sum_{l}\delta_l \sqrt{\lambda_l}, \sum_{l}\delta_l^2\lambda_l<\infty$, the martingale convergence theorem implies that $W$ is well-defined and that the $W_L$ converge to $W$ almost surely as $L\to\infty$. Therefore, from \Lem s \ref{Lem_conc_2} and \ref{Lem_conc_3} it follows that $\ln Z(\hnm)-\ln\Erw\brk{Z(\hnm)}$ converges to $W$ in distribution, meaning that for any $\eps>0$ we have
\begin{align}\label{eq_proof_main}
\lim_{n\to\infty} \pr\brk{|\ln Z(\hnm)-\ln\Erw\brk{Z(\hnm)}-W|>\eps}=0.
\end{align}
To derive \Thm~\ref{Thm_main} from~(\ref{eq_proof_main}) let $S$ be the event that $\hnm$ consists of $m$ distinct edges. Given that $S$ occurs, $\hnm$ is identical to $H_k(n,m)$.
Furthermore, Fact~\ref{Fact_doubleEdge} implies that $\pr\brk{S}=\Omega(1)$. Consequently, (\ref{eq_proof_main}) yields 
\begin{align}\label{eq_proof_main_1}
\nonumber 0&=\lim_{n\to\infty} \pr\brk{|\ln Z(\hnm)-\ln\Erw\brk{Z(\hnm)}-W|>\eps|S}\\
&=\lim_{n\to\infty} \pr\brk{|\ln Z(H_k(n,m))-\ln\Erw\brk{Z(\hnm)}-W|>\eps}.
\end{align}
Furthermore, \Lem~\ref{Lem_first_moment_balanced} implies that
$\Erw\brk{Z(\hnm)},\Erw\brk{Z(H_k(n,m)}=\Theta\br{2^n\br{1-2^{1-k}}^m}$.
Thus, it holds that $\Erw\brk{Z(\hnm)}=\Theta(\Erw\brk{Z(H_k(n,m)})$ and with (\ref{eq_proof_main_1}) it follows that
$$\lim_{n\to\infty} \pr\brk{|\ln Z(H_k(n,m))-\ln\Erw\brk{Z(H_k(n,m)))}-W|>\eps}=0,$$
which proves \Thm~\ref{Thm_main}.
\end{proof}

\begin{proof}[Proof of \Cor~\ref{Cor_conc}]
The first part of the proof follows directly from \Thm~\ref{Thm_main} and the properties of $W$.
By the definition of convergence in distribution and Markov's inequality we have 
\begin{equation*}
\lim_{n\ra\infty}\pr\brk{|\ln Z(\hknm)-\ln\Erw\brk{Z(\hknm}|\leq\omega}=\pr\brk{|W|\leq\omega} \ge 1-\frac{\Erw|W|}{\omega}
\end{equation*}
and \eqref{eq_Cor_1} follows.

To prove the second part, we construct an event whose probability is bounded away from 0 and that is such that conditioned on this event, the number of solutions of the random hypergraph $\hknm$ is not concentrated very strongly.
\newline
We consider the event $\cT_t$ that the random hypergraph $\hknm$ contains $t$ isolated triangles, i.~e.~$t$ connected components such that each component consists of $3k-3$ vertices and 3 edges and the intersection of each pair of edges contains exactly one vertex.
It is well-known that for $t\ge 0$ there exists $\eps=\eps(d,t)>0$ such that
\begin{align}\label{eq_proof_cor_1}
\liminf_{n \to \infty} \pr\brk{\cT_t}>\eps.
\end{align}
Given $\cT_t$, we let $H_k^*(n,m)$ denote the random hypergraph obtained by choosing a set of $t$ isolated triangles randomly and removing them. Then $H_k^*(n,m)$ is identical to $H_k(n-(3k-3)t,m-3t)$ and with \Prop~\ref{Prop_first_moment} there exists a constant $C=C(d,k)$ such that
\begin{align*}
\Erw\brk{Z(H_k^*(n,m))}=\Erw\brk{Z(H_k(n-(3k-3)t,m-3t))}\le C \cdot 2^{n-(3k-3)t}\br{1-2^{1-k}}^{m-3t}.
\end{align*}
A very accurate calculation of the number of 2-colourings of a triangle in a hypergraph yields that this number is given by
$\br{2^{k-2}-1}\br{2^{2k-1}-2^k+2}$. Thus,  we obtain
\begin{align*}
\Erw\brk{Z(\hknm)|\cT_t}&\le\Erw\brk{Z(H_k(n-(3k-3)t,m-3t))} \br{\br{2^{k-2}-1}\br{2^{2k-1}-2^k+2}}^t\\
& \le C \cdot 2^{n}\br{1-2^{1-k}}^{m-3t} \br{1-2^{2-k}}^t\br{1-2^{1-k}+2^{2-2k}}^t\\
& \le C \cdot 2^{n}\br{1-2^{1-k}}^{m} \br{1-8\br{2^k-2}^{-3}}\\
& \le O\br{\Erw\brk{Z(\hknm)}} \br{1-8\br{2^k-2}^{-3}},
\end{align*}
implying that for any $\omega>0$ we can choose $t$ large enough so that $\Erw\brk{Z(\hknm)|\cT_t}\le \Erw\brk{Z(\hknm)}/(2\exp\brk{\omega})$. Using Markov's inequality, we obtain
\begin{align}\label{eq_proof_cor_2}
\pr\brk{\ln Z(\hknm)\ge \ln \Erw\brk{Z(\hknm)}-\omega|\cT_t}=\pr\brk{Z(\hknm)/\Erw\brk{Z(\hknm)} \ge \exp\brk{-\omega}|\cT_t}\le 1/2.
\end{align}
Thus, combining \eqref{eq_proof_cor_1} and \eqref{eq_proof_cor_2} yields that for any finite $\omega>0$ there is $\eps>0$ such that for large enough $n$ we have
\begin{align*}
\pr\brk{|\ln Z(\hknm)-\Erw\brk{\ln Z(\hknm)}|>\omega}
& \ge \pr\brk{\ln Z(\hknm)<\Erw\brk{\ln Z(\hknm)}-\omega}\\
& \ge \pr\brk{\ln Z(\hknm)\ge \ln \Erw\brk{Z(\hknm)}-\omega|\cT_t} \pr\brk{\cT_t}\\
& >\eps/2,
\end{align*}
thereby completing the proof of the second claim.
\end{proof}

\begin{proof}[Proof of \Cor~\ref{Cor_cont}]
This proof is nearly identical to the one in \cite{aco_plantsil}.
Assume for contradiction that $(\cA_n)_{n\geq1}$ is a sequence of events such that for some fixed number $0<\eps<1/2$ we have
	\begin{equation}\label{eq_cor_cont0}
	\lim_{n\ra\infty}\ppl\brk{\cA_n}=0\quad\mbox{while}\quad\limsup_{n\ra\infty}\prc\brk{\cA_n}>\eps.
	\end{equation}
Let $H_k(n,m,\sigma)$ denote a $k$-uniform hypergraph on $[n]$ with precisely $m$ edges chosen uniformly at random from all edges that are bichromatic under $\sigma$. Let $\cV(\sigma)$ be the event that $\sigma$ is a 2-colouring of $\hknm$.
Then
	\begin{align}\label{eq_cor_cont1}
	\Erw\brk{Z(\hknm)\vecone_{\cA_n}}
	&=\sum_{\sigma:\brk n\ra\{0,1\}} \pr\brk{\cV(\sigma) \text{ and }(H_k(n,m),\sigma)\in\cA_n}\nonumber\\
	&=\sum_{\sigma:\brk n\ra\{0,1\}}\pr\brk{(H_k(n,m),\sigma)\in\cA_n|\cV(\sigma)}\pr\brk{\cV(\sigma)}\nonumber\\
	&=\sum_{\sigma:\brk n\ra\{0,1\}}\pr\brk{H_k(n,m,\sigma)\in\cA_n}\pr\brk{\cV(\sigma)}\nonumber\\
	&\leq O\br{\br{1-2^{1-k}}^m}\sum_{\sigma:\brk n\ra\{0,1\}}\pr\brk{H_k(n,m,\sigma)\in\cA_n}\nonumber\\
	&=O\br{2^n\br{1-2^{1-k}}^m}\pr\brk{H_k(n,m,\SIGMA)\in\cA_n}=o\br{2^n\br{1-2^{1-k}}^m}.
	\end{align}
\noindent
By \Thm~\ref{Thm_main}, for any $\eps>0$ there is $\delta>0$ such that for all large enough $n$ we have
	\begin{equation}\label{eq_cor_cont2}
	\pr\brk{Z(\hknm)<\delta\Erw\brk{Z(\hknm)}}<\eps/2.
	\end{equation}
Now, let $\cE$ be the event that $Z(\hknm)\geq\delta\Erw[Z(\hknm]$ and let $q=\prc\brk{\cA_n|\cE}$.
Then
	\begin{eqnarray}\nonumber
	\Erw\brk{Z(\hknm)\vecone_{\cA_n}}&\geq&\delta\Erw[Z(\hknm)]\pr\brk{((\hknm,\SIGMA)\in\cA_n,\cE}\\
		&\geq&\delta q\Erw[Z(\hknm)]\pr\brk{\cE}\geq\delta q\Erw[Z(\hknm)]/2\nonumber\\
		&=&\frac{\delta q}{2}\cdot\Omega\br{2^n\br{1-2^{1-k}}^m}.\label{eq_cor_cont3}
	\end{eqnarray}
Combining~(\ref{eq_cor_cont1}) and~(\ref{eq_cor_cont3}), we obtain $q=o(1)$.
Hence, (\ref{eq_cor_cont2}) implies that
	\begin{align*}
	\prc\brk{\cA_n}&=\prc\brk{\cA_n|\neg\cE}\cdot\pr\brk{\neg\cE}+q\cdot\pr\brk{\cE}
		\leq\pr\brk{\neg\cE}+q\leq\eps/2+o(1),
	\end{align*}
in contradiction to~(\ref{eq_cor_cont0}).
\end{proof}

\section{The first moment calculation}\label{Sec_first_moment}

The aim in this section is to prove \Prop~\ref{Prop_first_moment} and a result that we need for \Prop~\ref{Prop_ratio_second_first}. 
For a hypergraph $H$ let $Z_{\rho}(H)$ be its number of $2$-colourings with colour density $\rho$. We set $\bar\rho=\frac 12$. For $\rho \in [0,1]$ we define
	\begin{equation}\label{fm_functions}
	f_1:\rho\mapsto \cH(\rho) + g_1(\rho) \qquad \text{with} \quad g_1(\rho)=\frac{d}{k} \ln \br{ 1 - \rho^k-(1-\rho)^k}.
	\end{equation}
The next lemma shows that $f_1(\rho)$ is the function we need to analyse in order to determine the expectation of $Z_\rho$. 	

\begin{lemma} \label{Lem_first_moment_balanced} 
Let $d^{\prime} \in (0, \infty)$. There exist numbers $C_1=C_1(k,d), C_2=C_2(k,d) > 0$ such that for any colour density $\rho$:
		\begin{equation}\label{eq_first_moment_balanced_1}	
		C_1 n^{-1/2} \exp \brk{n f_1(\rho)}\leq \Erw \brk{Z_{\rho}(\hnm)} \leq C_2   \exp \brk{n f_1(\rho)}.
		\end{equation}
Moreover, if $\left| \rho-\bar\rho\right| = o(1)$, then 
		\begin{equation} \label{eq_first_moment_balanced_2}
		\Erw \brk{Z_{\rho}(\hnm)}\sim \sqrt{\frac{2}{\pi n}}\exp \brk{\frac{d(k-1)}{2^{k}-2}}\exp\brk{n f_1(\rho)}.
		\end{equation}
\end{lemma}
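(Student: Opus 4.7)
The plan is to start from the exact formula
\begin{align}\label{eq_first_mom_plan_exact}
\Erw\brk{Z_\rho(\hnm)} = \binom{n}{\rho n}\br{1 - \frac{\binom{\rho n}{k} + \binom{(1-\rho)n}{k}}{\binom{n}{k}}}^m,
\end{align}
which holds whenever $\rho n$ is an integer. This identity follows by linearity: for each of the $\binom{n}{\rho n}$ maps $\sigma : [n] \to \{0,1\}$ of colour density $\rho$, the $m$ edges of $\hnm$ are drawn independently from $\binom{[n]}{k}$, so the probability that $\sigma$ is a valid 2-colouring equals the single-edge bichromaticity probability raised to the $m$-th power.

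For the two-sided bound \eqref{eq_first_moment_balanced_1}, I would apply Stirling's formula to $\binom{n}{\rho n}$ together with the elementary expansion $\binom{\rho n}{k}/\binom{n}{k} = \rho^k(1 + O(1/n))$ and its analogue for $1-\rho$. Since $m = \Theta(n)$, the $O(1/n)$ correction inside the bracket in \eqref{eq_first_mom_plan_exact} contributes at most an $\exp(O(1))$ multiplicative factor once raised to the $m$-th power, so combining the two estimates produces constants $C_1, C_2 > 0$ depending only on $k, d$ for which \eqref{eq_first_moment_balanced_1} holds (the boundary values of $\rho$ near $0$ or $1$ being trivial since $Z_\rho$ then vanishes).

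For the sharp asymptotic \eqref{eq_first_moment_balanced_2} the single-edge expansion must be carried one further order. Writing $\binom{\rho n}{k}/\binom{n}{k} = \prod_{i=0}^{k-1}(\rho n - i)/(n - i)$ and expanding the logarithm termwise gives
$$\frac{\binom{\rho n}{k}}{\binom{n}{k}} = \rho^k\br{1 - \frac{\binom{k}{2}(1-\rho)}{\rho n} + O(1/n^2)},$$
which together with its analogue for $1-\rho$ yields, uniformly for $|\rho - 1/2| = o(1)$,
$$1 - \frac{\binom{\rho n}{k} + \binom{(1-\rho)n}{k}}{\binom{n}{k}} = \br{1 - 2^{1-k}}\br{1 + \frac{\binom{k}{2}\cdot 2^{1-k}}{n\br{1 - 2^{1-k}}} + o(1/n)}.$$
Raising to the $m$-th power with $m = dn/k + O(1)$ produces the multiplicative factor $\exp\br{\frac{d}{k}\cdot\frac{\binom{k}{2}\cdot 2^{1-k}}{1 - 2^{1-k}}}$, and a short simplification using $2^{-k}/(1 - 2^{1-k}) = 1/(2^k - 2)$ turns this into $\exp\br{d(k-1)/(2^k - 2)}$. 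Combined with the sharp Stirling asymptotic $\binom{n}{\rho n} \sim \sqrt{2/(\pi n)}\exp(n\cH(\rho))$, valid in the regime $\rho = 1/2 + o(1)$ where $\rho(1-\rho) = 1/4 + o(1)$, this delivers \eqref{eq_first_moment_balanced_2}.

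The main technical point is ensuring uniformity of the $O(1/n)$ second-order expansion of the single-edge probability on the window $|\rho - 1/2| = o(1)$, so that after $m$-fold exponentiation one recovers exactly the prefactor $\exp(d(k-1)/(2^k - 2))$ rather than merely its exponential order; the rest of the argument is routine Stirling bookkeeping.
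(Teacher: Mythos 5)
Your route is the same as the paper's: start from the exact identity $\Erw\brk{Z_\rho(\hnm)}=\binom{n}{\rho n}\br{1-\bc{\binom{\rho n}{k}+\binom{(1-\rho)n}{k}}/\binom{n}{k}}^m$, get the crude two-sided bound from Stirling plus a first-order expansion of the forbidden-edge fraction, and get the sharp constant by pushing that expansion one order further; your bookkeeping for the prefactor, $\frac dk\binom k2\frac{2^{1-k}}{1-2^{1-k}}=\frac{d(k-1)}{2^k-2}$, is exactly the paper's constant.

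There is, however, one step that is wrong as written and sits precisely at the sensitive point. Your displayed identity
$1-\bc{\binom{\rho n}{k}+\binom{(1-\rho)n}{k}}/\binom nk=\br{1-2^{1-k}}\br{1+\frac{\binom k2 2^{1-k}}{n(1-2^{1-k})}+o(1/n)}$
freezes the \emph{leading} factor at $\rho=\tfrac12$, which is only legitimate when $|\rho-\tfrac12|=o(n^{-1/2})$. Under the lemma's hypothesis $|\rho-\bar\rho|=o(1)$ one may have $n(\rho-\tfrac12)^2\to\infty$ (and indeed the later application in \Cor~\ref{Cor_first_moment_total} sums over $|\rho-\bar\rho|\le n^{-3/8}$, where $n(\rho-\tfrac12)^2$ can be of order $n^{1/4}$), so $(1-2^{1-k})^m$ and $\br{1-\rho^k-(1-\rho)^k}^m$ differ by an unbounded factor $\exp\br{\Theta(n(\rho-\tfrac12)^2)}$ and the display cannot hold with relative error $o(1/n)$. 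The repair is the one the paper makes: keep the leading factor exactly as $1-\rho^k-(1-\rho)^k$ (this is what $g_1(\rho)$ records, and your final formula with $\exp\brk{nf_1(\rho)}$ implicitly assumes this), and pass to the limit $\rho\to\tfrac12$ only in the coefficient of the $1/n$ correction, using
\begin{equation*}
\frac{\rho^{k-1}(1-\rho)+\rho(1-\rho)^{k-1}}{1-\rho^k-(1-\rho)^k}\sim\frac1{2^{k-1}-1},
\end{equation*}
which after multiplication by $m\sim dn/k$ yields the factor $\exp\brk{\frac{d(k-1)}{2^k-2}}$. With the display restated this way (and the Stirling part kept $\rho$-dependent, as you already do), your argument coincides with the paper's proof.
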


\begin{proof}
	The edges in the random hypergraph $\hnm$ are independent by construction, so the expected number of solutions with colour density $\rho$ can be written as
	\begin{equation}\label{eq_first_moment_balanced11}
	\Erw[Z_{\rho}(\hnm)]= { n \choose \rho n }\bc{1-\frac{{\rho n\choose k}+{(1-\rho) n \choose k}}{N}}^m,\quad\mbox{where }N=\bink nk.
	\end{equation}
	Further, the number of ``forbidden'' edges is given by
	\begin{align*} 
	{\rho n\choose k}+{(1-\rho) n \choose k}
	&=\frac{1}{k!}\br{n^k\br{\rho^k+(1-\rho)^k}-\frac{k(k-1)}{2}n^{k-1}\br{\rho^{k-1}+(1-\rho)^{k-1}}+\Theta\br{n^{k-2}}}\\
	&=N\br{\rho^k+(1-\rho)^k}-\frac{k(k-1)}{2k!}n^{k-1}\br{\rho^{k-1}(1-\rho)+\rho(1-\rho)^{k-1}}+\Theta\br{n^{k-2}}
	\end{align*}
	yielding
	\begin{align*}
	1-\frac{{\rho n\choose k}+{(1-\rho) n \choose k}}{N}&=1-\rho^k-(1-\rho)^k+\frac{k(k-1)}{2n}\br{\rho^{k-1}(1-\rho)+\rho(1-\rho)^{k-1}}+\Theta\br{n^{-2}}.
	\end{align*}
	To proceed we observe that $\ln\br{x+\frac{y}{n}}=\ln(x)+\ln\br{1+\frac{y}{xn}}$ for $x>0, y<xn$ and consequently 
	\begin{align}\label{eq_first_moment_balanced12}
	\nonumber &m\ln\br{1-\frac{{\rho n\choose k}+{(1-\rho) n \choose k}}{N}}\\
	\nonumber& \qquad =\frac{dn}{k}\br{\ln\br{1-\rho^k-(1-\rho)^k}+\ln\br{1-\frac{k(k-1)}{2n}\frac{\rho^{k-1}(1-\rho)+\rho(1-\rho)^{k-1}}{1-\rho^k-(1-\rho)^k}+\Theta\br{n^{-2}}}}\\
	&\qquad \sim \frac{dn}{k}\ln\br{1-\rho^k-(1-\rho)^k}+\frac{d(k-1)}{2}\br{\frac{\rho^{k-1}(1-\rho)+\rho(1-\rho)^{k-1}}{1-\rho^k-(1-\rho)^k}}+\Theta\br{n^{-1}}.
	\end{align}
	Equation (\ref{eq_first_moment_balanced_1}) follows from (\ref{eq_first_moment_balanced11}), (\ref{eq_first_moment_balanced12}) and Stirling's formula applied to ${ n \choose \rho n }$.
	Moreover, equation (\ref{eq_first_moment_balanced_2}) follows from~(\ref{eq_first_moment_balanced11}) 
	and~(\ref{eq_first_moment_balanced12}) because $\left|\rho-\bar\rho\right|=o(1)$ implies that	
	\begin{equation*} 
	{ n \choose \rho n} \sim \sqrt{\frac{2}{\pi n}} \exp \brk{n \cH(\rho)} \qquad \text{and} \qquad \frac{\rho^{k-1}(1-\rho)+\rho(1-\rho)^{k-1}}{1-\rho^k-(1-\rho)^k}\sim \frac{1}{2^{k-1}-1}.
	\end{equation*}
\end{proof}
\noindent The following corollary states an expression for $\Erw \brk{Z(\hnm)}$. Additionally, it shows that when $\omega\ra\infty$, this value can be approximated by $\Erw \brk{\Zw(\hnm)}$.

\begin{corollary} \label{Cor_first_moment_total}
	Let $d^{\prime} \in (0, \infty)$. Then 
	\begin{align}\label{eq_fm_total_1}
	\Erw \brk{Z(\hnm)} \sim \exp \brk{\frac{d(k-1)}{2^{k}-2}+nf_1(\bar\rho)}\br{1+\frac{d(k-1)}{2^{k-1}-1}}^{-\frac 12}.
	\end{align}
	Furthermore, for $\w>0$  we have 
	\begin{align}\label{eq_fm_total_2}
	\lim_{\w\to\infty} \lim_{n \to \infty} \frac{\Erw\brk{\Zw(\hnm)}}{\Erw \brk{Z(\hnm)}}=1.
	\end{align}
\end{corollary}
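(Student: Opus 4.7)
The strategy is to evaluate $\Erw\brk{Z(\hnm)}=\sum_{\rho\in\cA(n)}\Erw\brk{Z_\rho(\hnm)}$ by Laplace's method. By \Lem~\ref{Lem_first_moment_balanced} the summand is, for $\rho$ close to $\bar\rho=\tfrac12$, asymptotic to $\sqrt{2/(\pi n)}\exp\brk{d(k-1)/(2^{k}-2)}\exp\brk{nf_1(\rho)}$, and $f_1$ is strictly maximised at $\bar\rho$. Given this, (\ref{eq_fm_total_1}) is a standard Gaussian-sum computation, and (\ref{eq_fm_total_2}) drops out by observing that $\Erw\brk{\Zw(\hnm)}$ is the same sum truncated to a window of radius $\w/\sqrt n$ around $\bar\rho$.

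First I would check that $f_1$ has a unique global maximiser at $\bar\rho$: symmetry gives $f_1(\rho)=f_1(1-\rho)$, strict concavity of $\cH$ handles the entropy part, and $\rho^k+(1-\rho)^k$ is strictly convex on $[0,1]$ with minimum at $\bar\rho$, so $g_1$ is also strictly maximised there. A direct derivative computation at $\bar\rho$ gives $\cH''(\bar\rho)=-4$ and $g_1''(\bar\rho)=-4d(k-1)/(2^{k-1}-1)$, whence
\[
f_1''(\bar\rho) \;=\; -4\br{1+\frac{d(k-1)}{2^{k-1}-1}}.
\]
Combined with the coarse upper bound (\ref{eq_first_moment_balanced_1}), the standard two-scale argument shows that only colour densities in a window $|\rho-\bar\rho|\le\phi(n)/\sqrt n$ (with $\phi(n)\to\infty$ sufficiently slowly) contribute to the sum up to $o(1)$ multiplicative error: for $|\rho-\bar\rho|\ge\delta$ a uniform strict-maximum gap produces an exponentially smaller contribution, while for $\phi(n)/\sqrt n\le|\rho-\bar\rho|<\delta$ the local quadratic bound from Taylor at $\bar\rho$ bounds the summand by $\exp\brk{nf_1(\bar\rho)-c\phi(n)^2}$.

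On the retained window the sharp estimate (\ref{eq_first_moment_balanced_2}) applies and Taylor expansion yields $nf_1(\rho)=nf_1(\bar\rho)+\tfrac12 nf_1''(\bar\rho)(\rho-\bar\rho)^2+o(1)$. Substituting $t=\sqrt n(\rho-\bar\rho)$, which lives on a lattice of mesh $1/\sqrt n$, turns the sum into a Riemann approximation of a Gaussian integral:
\[
\sum_\rho\exp\brk{\tfrac12 nf_1''(\bar\rho)(\rho-\bar\rho)^2}\;\sim\;\sqrt n\int_{-\infty}^{\infty}\exp\brk{\tfrac12 f_1''(\bar\rho)t^2}\dd t\;=\;\sqrt n\cdot\sqrt{\frac{2\pi}{\abs{f_1''(\bar\rho)}}}.
\]
Multiplying by the prefactor $\sqrt{2/(\pi n)}\exp\brk{d(k-1)/(2^{k}-2)}$, the factors of $\sqrt n$ cancel and a short algebraic simplification produces exactly (\ref{eq_fm_total_1}). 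For (\ref{eq_fm_total_2}) one observes that $\Erw\brk{\Zw(\hnm)}$ is the restriction of the same sum to $|t|<\w$, so that
\[
\lim_{n\to\infty}\frac{\Erw\brk{\Zw(\hnm)}}{\Erw\brk{Z(\hnm)}}\;=\;\frac{\int_{-\w}^{\w}\exp\brk{\tfrac12 f_1''(\bar\rho)t^2}\dd t}{\int_{-\infty}^{\infty}\exp\brk{\tfrac12 f_1''(\bar\rho)t^2}\dd t}\;\xrightarrow[\w\to\infty]{}\;1
\]
since $f_1''(\bar\rho)<0$. The main obstacle is the tail control needed to justify the truncation to a $o(1)$-neighbourhood of $\bar\rho$; once strict uniqueness of the maximum together with (\ref{eq_first_moment_balanced_1}) dispose of that, everything that remains is a routine Laplace asymptotic.
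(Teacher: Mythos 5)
Your proposal is correct and follows essentially the same route as the paper's proof: the paper likewise truncates the sum over colour densities to a mesoscopic window around $\bar\rho$ (fixing the radius $n^{-3/8}$ rather than your $\phi(n)/\sqrt n$), bounds the tail using the coarse estimate (\ref{eq_first_moment_balanced_1}) together with the quadratic expansion of $f_1$ with $B(d,k)=4\br{1+\frac{d(k-1)}{2^{k-1}-1}}=|f_1''(\bar\rho)|$, and evaluates the window via the sharp estimate (\ref{eq_first_moment_balanced_2}) and a Gaussian summation. The second claim is likewise obtained in the paper by repeating this computation with the window $\w/\sqrt n$, matching your Gaussian-integral-ratio argument.
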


\begin{proof}
	The functions  $\rho \mapsto \cH(\rho)$ and $\rho \mapsto g_1(\rho)$ are both concave and attain their maximum at $\rho=\bar\rho$. Consequently, setting $B(d,k) = 4\br{1+\frac{d(k-1)}{2^{k-1}-1}}$ and expanding around $\bar \rho$, we obtain		
	\begin{equation}\label{eq_first_moment total_1}
	f_1(\bar\rho) - \frac{B(d,k)}{2} \br{\rho-\bar{\rho}}^2 - O\br{\br{\rho-\bar{\rho}}^3} \leq  f_1(\rho) \leq f_1(\bar\rho) - \frac{B(d,k)}{2} \br{\rho-\bar{\rho}}^2. 
	\end{equation}
	Plugging the upper bound from~(\ref{eq_first_moment total_1}) into (\ref{eq_first_moment_balanced_1}) and observing that the number of all colour densities for maps $\sigma:[n] \to \{0,1\}$ is bounded from above by $n=\exp[o(n)]$, we find		
	\begin{equation}\label{eq_first_moment total_2}
	S_1=\sum_{\rho:\ |\rho - \bar \rho|> n^{-3/8} } 
	\Erw \brk{Z_{\rho}(\hnm)} \leq C_2 \exp \brk{n f_1(\bar\rho)- \frac{B(d,k)}{2}n^{1/4}}.
	\end{equation}
	On the other hand, equation (\ref{eq_first_moment_balanced_2}) implies that
	\begin{align} \label{eq_first_moment total_3}
	S_2&=\sum_{\rho:\ |\rho - \bar \rho| \leq n^{-3/8}} \Erw \brk{Z_{\rho}(\hnm)} 
	\sim \sqrt{\frac{2}{\pi n}}\exp \brk{\frac{d(k-1)}{2^{k}-2}}+\exp\brk{n f_1(\bar\rho)} \sum_{\rho} \exp \brk{- n \frac{B(d,k)}{2} \br{\rho - \bar \rho}^2}. 
	\end{align}	
	The last sum is in the standard form of a Gaussian summation. Using $\int_{-\infty}^{\infty} \exp\brk{-a(x+b)^2}dx=\sqrt{\frac a\pi}$ we get  
	\begin{align}\label{eq_first_moment total_4}
	\nonumber \sum_{\rho \in \cA(n)} \exp \brk{- n \frac{B(d,k)}{2}\br{\rho - \bar \rho}^2} \nonumber &\sim n \int \exp \brk{- n \frac{B(d,k)}{2} \br{\rho - \bar \rho}^2} d\rho\\
	&\sim n\sqrt{\frac{2\pi}{nB(d,k)}} = \sqrt{\frac{\pi n}{2}}  \br{1+\frac{d(k-1)}{2^{k-1}-1}}^{-\frac 12}
	\end{align}
	Plugging~(\ref{eq_first_moment total_4}) into (\ref{eq_first_moment total_3}), we obtain
	\begin{align}\label{eq_first_moment total_5}
	S_2	&\sim \exp \brk{\frac{d(k-1)}{2^{k}-2}+nf_1(\bar\rho)}\br{1+\frac{d(k-1)}{2^{k-1}-1}}^{-\frac 12}.
	\end{align}
	Finally, comparing~(\ref{eq_first_moment total_2}) and~(\ref{eq_first_moment total_5}), we see that $S_1=o(S_2)$. Thus, $S_1+S_2\sim S_2$ and (\ref{eq_fm_total_1}) follows from~(\ref{eq_first_moment total_5}).
	
	\noindent To prove (\ref{eq_fm_total_2}), we find that analogously to (\ref{eq_first_moment total_2}), (\ref{eq_first_moment total_3}) and the calculation leading to (\ref{eq_first_moment total_5}), it holds that	\begin{equation*}
	S_1'=\sum_{\rho:\ |\rho - \bar \rho|> \w n^{-1/2} } \Erw \brk{Z_{\rho}(\hnm)} \leq C_2 \exp \brk{n f(\bar\rho)- \frac{B(d,k)}{2}\w}.
	\end{equation*}
	and 	
	\begin{align*} 
	S_2' =\sum_{\rho:\ |\rho - \bar \rho| \leq \w n^{-1/2}} \Erw \brk{Z_{\rho}(\hnm)} \sim \exp \brk{\frac{d(k-1)}{2^{k}-2}+nf_1(\bar\rho)}\br{1+\frac{d(k-1)}{2^{k-1}-1}}^{-\frac 12}. 
	\end{align*}	
	Thus, we have $\lim_{\w\ra\infty} \lim_{n \to \infty} \frac{S_1'+S_2'}{S_2'}=1$, yielding (\ref{eq_fm_total_2}).
\end{proof}

\begin{proof}[Proof of \Prop~\ref{Prop_first_moment}]
	The statements are immediate by \Cor~\ref{Cor_first_moment_total} and the fact that $$f_1\br{\bar\rho}=\ln 2+\frac dk\ln\br{1-2^{1-k}}.$$
\end{proof}

\noindent Finally, we derive an expression for $\Erw\brk{\Zwni(\hnm)}$ that we will need to prove \Prop~\ref{Prop_ratio_second_first}.
\begin{lemma}\label{Lem_fm_shifted}
 Let $d^{\prime} \in (0,\infty), \w, \nu \in \mathbb N, s\in[\w\nu]$ and $\rho\in\Awni$. Then with $\rwni$ as defined in (\ref{rwni}) we have
\begin{align*}
 \Erw\brk{\Zwni(\hnm)}\sim_{\nu} |\Awni|\sqrt{\frac{2}{\pi n}}\exp \brk{\frac{d(k-1)}{2^{k}-2}}\exp\brk{n f_1\br{\rwni}}.
\end{align*}
\end{lemma}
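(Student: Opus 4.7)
The plan is to express $\Erw[\Zwni(\hnm)]$ as a sum of $\Erw[Z_\rho(\hnm)]$ over $\rho \in \Awni$, apply the precise asymptotic from \Lem~\ref{Lem_first_moment_balanced}, and then argue that the factor $\exp[nf_1(\rho)]$ is essentially constant across the tiny window $\Awni$ (which has width $2/(\nu\sqrt n)$). Concretely, I would start by writing
\begin{equation*}
\Erw[\Zwni(\hnm)] \;=\; \sum_{\rho \in \Awni} \Erw[Z_\rho(\hnm)].
\end{equation*}
Since every $\rho\in\Awni$ satisfies $|\rho-\bar\rho|=O(\omega/\sqrt n)=o(1)$, equation~(\ref{eq_first_moment_balanced_2}) of \Lem~\ref{Lem_first_moment_balanced} gives, uniformly in $\rho\in\Awni$,
\begin{equation*}
\Erw[Z_\rho(\hnm)]\;\sim\;\sqrt{\tfrac{2}{\pi n}}\exp\!\brk{\tfrac{d(k-1)}{2^k-2}}\exp[nf_1(\rho)].
\end{equation*}

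The crux is then to show that $\exp[nf_1(\rho)]\sim_\nu \exp[nf_1(\rwni)]$ uniformly for $\rho\in\Awni$. Here I would use the fact that $f_1'(\bar\rho)=0$: indeed, $f_1'(\rho)=\ln\!\bc{\tfrac{1-\rho}{\rho}}+d\cdot\tfrac{(1-\rho)^{k-1}-\rho^{k-1}}{1-\rho^k-(1-\rho)^k}$ vanishes at $\rho=1/2$ by symmetry. Taylor-expanding around $\rwni$, which lies within $O(\omega/\sqrt n)$ of $\bar\rho$, yields $|f_1'(\rwni)|=O(\omega/\sqrt n)$, while $f_1''$ remains bounded in a neighbourhood of $\bar\rho$. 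Since $|\rho-\rwni|\le 1/(\nu\sqrt n)$ for $\rho\in\Awni$, we get
\begin{equation*}
n\big|f_1(\rho)-f_1(\rwni)\big| \;\le\; n\cdot O(\omega/\sqrt n)\cdot\tfrac{1}{\nu\sqrt n} + n\cdot O\!\bc{\tfrac{1}{\nu^2 n}} \;=\; O(\omega/\nu)+O(1/\nu^2),
\end{equation*}
which tends to $0$ as $\nu\to\infty$ (after taking $n\to\infty$ first, with $\omega$ fixed). Hence $\exp[nf_1(\rho)]/\exp[nf_1(\rwni)]\to 1$ uniformly for $\rho\in\Awni$ in the $\sim_\nu$ sense.

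Summing the uniform estimate over the $|\Awni|$ densities in the window then produces exactly the claimed expression. The main (and only real) obstacle is the Taylor bound in the previous paragraph: a naive estimate $n\cdot f_1'(\rwni)\cdot(\rho-\rwni)=O(\sqrt n/\nu)$ would blow up, and the argument succeeds only because the symmetry $f_1'(\bar\rho)=0$ kills the dominant factor and leaves $O(\omega/\nu)$. Everything else is a direct invocation of \Lem~\ref{Lem_first_moment_balanced} and the additivity of expectation over the disjoint decomposition of $\Awni$ by the individual achievable colour densities.
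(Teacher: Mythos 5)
Your proposal is correct and follows essentially the same route as the paper: the paper also Taylor-expands $f_1$ around $\rwni$, notes that the linear coefficient is only $\Theta(\omega/\sqrt n)$ (because $f_1'$ vanishes at $\bar\rho$ and $\rwni$ lies within $O(\omega/\sqrt n)$ of it), concludes $n|f_1(\rho)-f_1(\rwni)|=O(\omega/\nu)\to 0$ as $\nu\to\infty$, and then inserts this uniform estimate into~(\ref{eq_first_moment_balanced_2}) and multiplies by $|\Awni|$. You merely make explicit the symmetry computation of $f_1'$ that the paper leaves implicit.
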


\begin{proof}
Using a Taylor expansion of $f_1(\rho)$ around $\rho=\rwni$, we get
 \begin{align}\label{eq_fm_shifted}
 f_1(\rho)=f_1(\rwni)+\Theta\br{\frac \w{\sqrt n}}|\rho-\rwni|+\Theta\br{\br{\rho-\rwni}^2}.
 \end{align}
As $|\rho-\rwni|\le \frac{1}{\nu \sqrt n}$ for $\rho\in\Awni$, we conclude that  $f_1(\rho)=f_1(\rwni)+O\br{\frac \w{\nu n}}$ and as this is independent of $\rho$ the assertion follows by inserting (\ref{eq_fm_shifted}) in (\ref{eq_first_moment_balanced_2}) and multiplying with $|\Awni|$.
\end{proof}

\section{Counting short cycles}\label{Sec_short_cycles}

We recall that for $l \in \{2,\ldots,L\}$ we denote by $C_{l,n}$ the number of cycles of length $l$ in $\hnm$. Further we let $c_2,\ldots,c_L$ be a sequence of non-negative integers and $S$ be the event that $C_{l,n}=c_l$ for $l=2,\ldots,L$. Additionally, for an assignment $\sigma:[n] \to \{0,1\}$ we let $\cV(\sigma)$ be the event that $\sigma$ is a colouring of the random graph $\hnm$.
We also recall $\lambda_l,\delta_l$ from~(\ref{eq_lambdadelta}).

\begin{proof}[Proof of \Prop~\ref{Prop_ratio_first_cond}]
First observe that from the definition of $\lambda_l$ and $\delta_l$ in (\ref{eq_lambdadelta}) and the fact that $\sum_{n=1}^\infty\frac{x^n}n=-\ln(1-x)$ we get
\begin{align}\label{eq_lambdadeltainfty}
\exp \brk{\sum_{l \geq 2} \lambda_l \delta_l^2} = \exp\brk{-\frac{d(k-1)}2\frac 1{\br{2^{k-1}-1}^2}}\br{1-\frac{d(k-1)}{\br{2^{k-1}-1}^2}}^{-1/2}.
\end{align}
Together with \eqref{eq_lambdadeltainfty},
\Prop~\ref{Prop_ratio_first_cond} readily follows from the following lemma about the distribution of the random variables $C_{l,n}$ given $\cV(\sigma)$.

\begin{lemma}\label{Lem_planted_cycles}
	Let $\mu_l=\frac{(d(k-1))^l}{2l}\brk{1+\frac{(-1)^l}{\br{2^{k-1}-1}^l}}$.
	Then
	$\pr\brk{S|\cV(\sigma)} \sim \prod_{l=2}^{L}\frac{\exp\brk{-\mu_l}}{c_l!}\mu_l^{c_l}$ for any $\sigma$ with $\rho(\sigma)\in\Aw$.
\end{lemma}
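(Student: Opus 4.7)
The plan is to prove Lemma~\ref{Lem_planted_cycles} via the method of factorial moments: show that under $\pr\brk{\,\cdot\,|\cV(\sigma)}$ the cycle counts $(C_{l,n})_{2\le l\le L}$ converge jointly to independent Poisson variables with means $\mu_l$. Since the lemma's right-hand side is exactly the joint probability mass function of this product Poisson distribution evaluated at $(c_2,\ldots,c_L)$, this is equivalent to the desired assertion. The conditional model is the crucial starting point: because the $m$ hyperedges of $\hnm$ are i.i.d.\ uniform on $\binom{[n]}{k}$, conditioning on $\cV(\sigma)$ makes them i.i.d.\ uniform on the set $B(\sigma)$ of edges bichromatic under $\sigma$, and a direct count for balanced $\sigma$ gives $|B(\sigma)|\sim N(1-2^{1-k})$ where $N=\binom{n}{k}$.

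For the first moment $\Erw\brk{C_{l,n}|\cV(\sigma)}$, I would parametrize a cycle of length $l$ by its $l$ \emph{junction} vertices $v_1,\ldots,v_l$ (each shared by two consecutive edges) and its $l(k-2)$ inner vertices, and then condition on the junction colours $X_i=\sigma(v_i)$. For balanced $\sigma$ these are asymptotically i.i.d.\ $\mathrm{Be}(1/2)$, and once they are fixed the edges $e_i$ become independent: $e_i$ is bichromatic with probability $1$ when $X_{i-1}\ne X_i$ and with probability $1-2^{-(k-2)}$ when $X_{i-1}=X_i$. Averaging over the $X_i$ via the transfer matrix
$$M=\begin{pmatrix} 1-2^{-(k-2)} & 1 \\ 1 & 1-2^{-(k-2)} \end{pmatrix}$$
yields $\pr\brk{\text{cycle is bichromatic}}\sim 2^{-l}\operatorname{Tr}(M^l) = (1-2^{1-k})^l + (-1)^l 2^{(1-k)l}$. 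Combined with the total cycle count implicit in Fact~\ref{Fact_cycles} and a division by $|B(\sigma)|^l$ to account for the conditional edge density, the factors $(1-2^{1-k})^l$ cancel and produce $\Erw\brk{C_{l,n}|\cV(\sigma)}\sim\lambda_l(1+\delta_l)=\mu_l$, matching the desired Poisson mean.

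For the higher factorial moments, $\Erw\brk{\prod_{l=2}^L (C_{l,n})_{c_l}\,|\,\cV(\sigma)}$ counts ordered tuples of distinct cycles of the prescribed lengths contained in the hypergraph. The dominant contribution comes from vertex-disjoint tuples, for which the bichromaticity events factorize over the disjoint vertex sets and the calculation of the previous step is repeated in parallel, giving exactly $\prod_l \mu_l^{c_l}$. Tuples with any shared vertex use strictly fewer than $\sum_l c_l \cdot l(k-1)$ vertices and thus contribute $O(1/n)$ relative to the disjoint term; there are only boundedly many overlap topologies for fixed $(c_l)$. The main obstacle will be the enumeration of these overlap patterns and the control of their corrected bichromaticity probabilities, since in the hypergraph setting a shared vertex does not force shared edges, so the catalogue of overlap topologies is genuinely richer than in the ordinary graph case. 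Once this bookkeeping is under control, the method of factorial moments delivers joint Poisson convergence with the parameters $\mu_l$, and with it the claimed product formula.
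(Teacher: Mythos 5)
Your proposal is correct and takes essentially the same route as the paper: conditionally on $\cV(\sigma)$ it, too, argues via joint factorial moments, computes the conditional expected number of cycles by averaging over the junction-colour pattern around the cycle (your transfer-matrix trace is exactly the paper's parity-restricted binomial sum, giving $\Erw\brk{C_{l,n}|\cV(\sigma)}\sim\lambda_l(1+\delta_l)=\mu_l$), and shows that tuples of cycles that are degenerate or not vertex-disjoint contribute only $O(n^{-1})$. The one difference is in the bookkeeping you flag as the main obstacle: the paper's Claim~\ref{Claim_overlapCycles} sidesteps any enumeration of overlap topologies by noting that every bad tuple forces some vertex set of size $(l-1)(k-1)+j$ with $j\le k-2$, $l\le L$, spanning at least $l$ edges, and bounding the conditional expected number of such dense sets directly via Markov's inequality.
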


\noindent
Before we establish \Lem~\ref{Lem_planted_cycles}, let us point out how it implies \Prop~\ref{Prop_ratio_first_cond}. By Bayes' rule, we have
\begin{align}\label{eq_cycles_1}
\Erw\brk{\Zwni(\hnm)| S}&= \frac{1}{\pr[S]}\sum_{\tau\in\Awni}\pr[\cV(\tau)] \pr[S|\cV(\tau)].
\end{align}
Inserting the result from \Lem~\ref{Lem_planted_cycles} into (\ref{eq_cycles_1}) yields
\begin{align*}
\Erw\brk{\Zwni(\hnm)| S}
&\sim \frac{\prod_{l=2}^{L}\frac{\exp\brk{-\mu_l}}{c_l!}\mu_l^{c_l}}{\pr[S]}\sum_{\tau\in \Awni}\pr[\cV(\tau)]
\sim \frac{\prod_{l=2}^{L}\frac{\exp\brk{-\mu_l}}{c_l!}\mu_l^{c_l}}{\pr[S]}\Erw\brk{\Zwni(\hnm)}.  
\end{align*}
From \Lem~\ref{Lem_planted_cycles} and Fact~\ref{Fact_cycles} we get that
\begin{align*}
\frac{\prod_{l=2}^{L}\frac{\exp\brk{-\mu_l}}{c_l!}\mu_l^{c_l}}{\pr[S]}\sim\prod_{l=2}^L\left[1+\delta_l\right]^{c_l}\exp\brk{-\delta_l\lambda_l}
\end{align*}
and \Prop~\ref{Prop_ratio_first_cond} follows.
\end{proof}

\begin{proof}[Proof of \Lem~\ref{Lem_planted_cycles}]
We are going to show that for any fixed sequence of integers $m_1, \ldots, m_L\geq0$, the joint factorial moments satisfy 
\begin{eqnarray}
\mathbb{E}\brk{(C_{2,n})_{m_2}\cdots (C_{L,n})_{m_L}|\cV(\sigma)}\sim
\prod_{l=2}^L \mu_l^{m_l}. \label{eq_cycles_joint_moments}
\end{eqnarray}
Then \Lem~\ref{Lem_planted_cycles} follows from~\cite[\Thm~1.23]{Bollobas}.

We  consider the number of sequences of $m_2+\cdots+m_L$ distinct cycles such that 
 $m_2$ corresponds to the number of cycles of length $2$, and so on. Clearly this number is equal
to $(C_{2,n})_{m_2}\cdots (C_{L,n})_{m_L}$.  

We call a cycle \textit{good}, if it does not contain edges that overlap on more than one vertex. We call a sequence of good cycles \textit{good sequence} if for any two cycles $C$ and $C'$ in this sequence, there are no vertices $v \in C$ and $v' \in C'$ such that $v$ and $v'$ are contained in the same edge. Let $Y$ be the number of good sequences and $\bar Y$ be the number of sequences that are not good. Then it holds that
\begin{eqnarray}\label{eq_SplitToDisjointJointCase}
\mathbb{E}\brk{(C_{2,n})_{m_2}\cdots (C_{L,n})_{m_L}| \cV(\sigma)}=\mathbb{E}[Y|\cV(\sigma)]+\mathbb{E}[\bar Y|\cV(\sigma)].
\end{eqnarray}
The following claim states that the contribution of $\mathbb{E}[\bar Y|\cV(\sigma)]$ is negligible. Its proof follows at the end of this section. 

\begin{claim}\label{Claim_overlapCycles}
We have $\mathbb{E}\brk{\bar Y|\cV(\sigma)}=O\br{n^{-1}}$.
\end{claim}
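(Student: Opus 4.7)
I would bound $\Erw\brk{\bar Y\mid\cV(\sigma)}$ by stratifying the bad sequences according to the combinatorial \emph{shape} of the (multi-)hypergraph they span, and then showing that each such shape contributes only $O(n^{-1})$ to the conditional expectation, while the number of shapes is $O(1)$ because $L$ and the $m_l$ are fixed.

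First, under $\cV(\sigma)$ with $\rho(\sigma)\in\Aw$, the $m$ edges of $\hnm$ are iid uniform on the set $B(\sigma)$ of bichromatic hyperedges, and the computation behind \Lem~\ref{Lem_first_moment_balanced} gives $|B(\sigma)|=(1-2^{1-k})\binom{n}{k}(1+o(1))$. Consequently, for any fixed collection of $t$ distinct bichromatic edges, the probability that all of them occur among $\vec e_1,\ldots,\vec e_m$ is asymptotic to $(m)_t/|B(\sigma)|^t=\Theta(n^{-t(k-1)})$. In particular, a \emph{good} sequence of cycles of lengths $l_1,\ldots,l_r$ (with $r=m_2+\cdots+m_L$) uses $V^{\ast}=\sum_i l_i(k-1)$ distinct vertices and $E^{\ast}=\sum_i l_i$ distinct edges, so $V^{\ast}-E^{\ast}(k-1)=0$: the $\Theta(n^{V^{\ast}})$ vertex choices and the $\Theta(n^{-E^{\ast}(k-1)})$ edge probability combine to a $\Theta(1)$ contribution, which is precisely what reproduces $\prod_l\mu_l^{m_l}$ in $\Erw\brk{Y\mid\cV(\sigma)}$.

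For bad sequences I would classify the shapes into three types: (i)~a single cycle has two of its own edges sharing at least two vertices; (ii)~two distinct cycles share an edge; (iii)~two distinct cycles share a vertex, or the sequence contains an additional edge of $\hnm$ whose vertex set intersects two different cycles. In each case one has to check that the quantity $V-E(k-1)$ drops by at least $1$ compared with the good value $0$, where $V,E$ denote the number of distinct vertices and edges of the realized shape. For (i), identifying one pair of vertex-slots inside a single cycle reduces $V$ by $1$ while leaving $E$ unchanged. For (ii), fusing one shared edge between two cycles reduces $E$ by $1$ and $V$ by $k$, and $-k-(-1)(k-1)=-1$ thanks to $k\geq 3$. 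For (iii), merging one vertex reduces $V$ by $1$ with $E$ unchanged, whereas adding an extra connecting edge increases $E$ by $1$ while leaving $V$ unchanged, so $V-E(k-1)$ drops by $k-1\geq 2$. Hence each bad shape contributes $O(n^{V-E(k-1)})=O(n^{-1})$, and summing over the $O(1)$ many shapes yields the claim.

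The main obstacle is the combinatorial bookkeeping in the case analysis: one must verify that \emph{every} bad shape is obtained from a good one by a finite sequence of vertex identifications and/or edge additions, each of which strictly decreases $V-E(k-1)$, and in particular that the assumption $k\geq 3$ prevents the shared-edge case (ii) from producing only a $\Theta(1)$ contribution as it would for ordinary graphs ($k=2$). Once this uniform loss of $1$ in the exponent of $n$ is established across all shapes, multiplying by the finite number of shapes completes the proof.
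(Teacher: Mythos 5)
Your overall strategy is a legitimate alternative to the paper's. The paper does not enumerate shapes: it observes that $\bar Y>0$ forces the existence of a vertex set $R$ with $|R|=(l-1)(k-1)+j\le l(k-1)-1$ spanning at least $l$ edges, and then bounds the expected number of such dense sets by Markov's inequality, using exactly the conditional edge probability $\Theta(n^{1-k})$ that you derive. This is the same density--deficit idea as your $V-E(k-1)\le-1$, but it avoids the case analysis over union shapes entirely (in exchange it bounds $\pr\brk{\bar Y>0\mid\cV(\sigma)}$, whereas your computation would bound $\Erw\brk{\bar Y\mid\cV(\sigma)}$ directly, which is what the claim literally asserts). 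Your quantitative ingredients --- edges i.i.d.\ uniform on the $\sim(1-2^{1-k})\binom nk$ bichromatic edges given $\cV(\sigma)$, exponent $V^{*}-E^{*}(k-1)=0$ for good sequences, and $O(1)$ many shapes since $L$ and the $m_l$ are fixed --- are correct and consistent with the paper's first-moment computations.

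The gap is in the mechanism you propose for the key combinatorial fact. The assertion that every bad shape is reached from the good (disjoint, generic) configuration by operations \emph{each} of which strictly decreases $V-E(k-1)$ is false: fusing two edges whose $k$ vertex slots have already been identified in $j\ge1$ places changes $V$ by $-(k-j)$ and $E$ by $-1$, hence changes $V-E(k-1)$ by $j-1\ge0$, and for $j\ge2$ the exponent strictly \emph{increases} at that step. So a per-operation monotonicity argument cannot work as stated; you need a global argument, e.g.\ per connected component: pass to the bipartite incidence graph, whose cyclomatic number is $E_c(k-1)-V_c+1$, and note that any component containing two distinct cycles of the sequence, a non-good cycle, or an extra connecting edge supports two distinct cycles of the incidence graph, whence $V_c\le E_c(k-1)-1$, while components that are single good cycles contribute exactly $0$. (Alternatively, adopt the paper's shortcut and merely exhibit one small vertex set spanning too many edges.) Two smaller slips, neither fatal: in your case (iii) the connecting edge may bring up to $k-2$ new vertices, so $V$ is not unchanged and the drop is $1$, not $k-1$ (still sufficient); and in case (ii) the drop $-k+(k-1)=-1$ holds for every $k\ge2$, so $k\ge3$ plays no special role there.
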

\noindent

Thus it remains to count good sequences given $\cV(\sigma)$. We let $\sigma \in \Aw$ and first consider the number $D_{l,n}$ of rooted, directed, good cycles of length $l$. This will introduce a factor of $2l$ for the number of all good cycles of length $l$, thus $D_{l,n}=2lC_{l,n}$. For a rooted, directed, good cycle of length $l$ we need to pick $l$ vertices $(v_1,...,v_l)$ as roots, introducing a factor $(1+o(1))\br{\frac{n}{2}}^l$, and there have to exist edges between them which generates a factor $\brk{\frac{m}{\binom{n}{k}(1-2^{1-k})}}^l$. To choose the remaining vertices in the participating edges we have to distinguish between pairs of vertices $(v_i,v_{i+1})$ that are assigned the same colour and those that are not, because if $\sigma(v_i)=\sigma(v_{i+1})$ we have to make sure that at least one of the other $k-2$ vertices participating in this edge is assigned the opposite colour. This gives rise to the third factor in the following calculation. 

\begin{align}
\mathbb{E}\brk{D_{l,n} |\cV(\sigma)}
& \nonumber \sim \br{\frac{n}{2}}^l\brk{\frac{m}{\binom{n}{k}\br{1-2^{1-k}}}}^l \cdot 2\sum_{i=0}^l \brk{\binom{l}{i}\binom{n-2}{k-2}^i\brk{\binom{n-2}{k-2}-\binom{n/2}{k-2}}^{l-i}\mathrm 1_{\{i \text{ is even}\}}}\\
& \nonumber = \br{\frac{n}{2}}^l\brk{\frac{m}{\binom{n}{k}\br{1-2^{1-k}}}}^l \cdot \brk{\brk{2\binom{n-2}{k-2}-\binom{n/2}{k-2}}^l+\brk{-\binom{n/2}{k-2}}^l}\\
& \nonumber \sim \br{\frac{n}{2}}^l\brk{\frac{k!dn}{kn^k\br{1-2^{1-k}}}}^l \cdot \brk{\frac{\brk{\br{2^{k-1}-1}n^{k-2}}^l+\br{-n^{k-2}}^l}{\brk{2^{k-2}(k-2)!}^l}}\\
& \nonumber =\brk{d(k-1)}^l \br{1+\frac{(-1)^l}{\br{2^{k-1}-1}^l}}
\end{align}
Hence, recalling that $C_{l,n}=\frac1{2l}D_{l,n}$, we get
\begin{align}\label{eq_number_cycles}
\Erw\brk{C_{l,n} |\cV(\sigma)}&\sim \frac{\brk{d(k-1)}^l}{2l} \br{1+\frac{(-1)^l}{\br{2^{k-1}-1}^l}}. 
\end{align}
In fact, since $Y$ considers only good sequences and $l$, $m_2,\ldots, m_L$ remain fixed as $n\ra\infty$, (\ref{eq_number_cycles}) yields
\begin{eqnarray}
\mathbb{E}[Y|\cV(\sigma)] \sim\prod_{l=2}^L \br{\frac{\brk{d(k-1)}^l}{2l} \br{1+\frac{(-1)^l}{\br{2^{k-1}-1}^l}}}^{m_l} \nonumber.
\end{eqnarray}
Plugging the above relation and Claim  \ref{Claim_overlapCycles} into (\ref{eq_SplitToDisjointJointCase}) we get 
(\ref{eq_cycles_joint_moments}). The proposition follows.
\end{proof}

\begin{claimproof}{ \ref{Claim_overlapCycles}}
The idea of the proof is to find an event, namely that there exists an induced subgraph with too many edges, that always occurs if $\bar Y>0$ and whose probability we can bound from above. To this aim let $A=\{i\in \mathbb R|i=(l-1)(k-1)+j \text{ for some } l\le L, j\in\{0,...,k-2\}\}$. For every subset $R$ of $(l-1)(k-1)+j$ vertices, where $l\leq L$ and $j \in \{0,...,k-2\}$  let  $\mathbb{I}_{R}$ be equal to 1 if the number of edges that only consist of vertices in $R$ is at least $l$. 
Let the $H_L$ be the event that $\sum_{R:|R|\in A}\mathbb{I}_{R}>0$.  It is direct to check that if $\bar Y>0$ then $H_L$ occurs. This implies that
\begin{eqnarray}
\pr\brk{\bar Y>0|\cV(\sigma)}\leq \pr\brk{H_L|\cV(\sigma)}. \nonumber
\end{eqnarray}
The claim follows by appropriately bounding $\pr\brk{H_L|\cV(\sigma)}$.
For this we are going to use Markov's inequality, i.e. 
\begin{eqnarray}
\pr\brk{H_L|\cV(\sigma)}&\leq& \mathbb{E}\brk{\sum_{R:|R|\in A}\mathbb{I}_{R}|\cV(\sigma)} \nonumber
	=\sum_{l=2}^L\sum_{j=0}^{k-2}\sum_{R:|R|=(l-1)(k-1)+j}\mathbb{E}\brk{\mathbb{I}_{R}|\cV(\sigma)}\nonumber.
\end{eqnarray}
For any set $R$ such that $|R|=(l-1)(k-1)+j$, 
we can put $l$ edges inside  the set in at most $\br{\substack{\binom{(l-1)(k-1)+j}{k}\\l}}$ ways, which obviously gets largest if $j=k-2$ and thus $(l-1)(k-1)+j=l(k-1)-1$. Clearly conditioning on $\cV(\sigma)$ can only reduce the  number of different placings of the edges.

We observe that for a colouring $\sigma$ and two fixed vertices $v$ and $v'$ with $\sigma(v)\ne \sigma(v')$ the probability that $e(v,v')$ does not exist is $\br{1-\frac{1}{N-\cF\bc{\sigma}}}^m$.
Using inclusion/exclusion and the binomial theorem, with $N=\binom{n}{k}$ and $\cF\bc{\sigma}\sim 2^{1-k}N$, for a fixed set $R$ of cardinality $(l-1)(k-1)+j$ we get that
\begin{eqnarray}
\mathbb{E}\brk{\mathbb{I}_{R}|\cV(\sigma)}&\leq& \br{\substack{\br{\substack{l(k-1)-1\\k}}\\l}}  
{\sum_{i=0}^{l}{l \choose i}(-1)^i\br{1-\frac{i}{N-\cF\bc{\sigma}}}^m}
\nonumber\\
&\leq& \nonumber \br{\substack{\br{\substack{l(k-1)-1\\k}}\\l}} \br{\frac{m}{N-\cF\bc{\sigma}}}^{l} \sim \br{\substack{\br{\substack{l(k-1)-1\\k}}\\l}} \br{\frac{m}{\binom{n}{k}\br{1-2^{1-k}}}}^{l}. 
\nonumber
\end{eqnarray}
With $m=\frac{dn}{k}$ and since ${i\choose j}\leq \br{ie/j}^j$, it holds that  
\begin{align*}
\pr\brk{H_L|\cV(\sigma)} &\leq (1+o(1))\sum_{l=2}^L{n \choose l(k-1)-1}\br{\substack{\br{\substack{l(k-1)-1\\k}}\\l}} \br{\frac{m}{\binom{n}{k}\br{1-2^{1-k}}}}^{l} \nonumber\\
&=(1+o(1)) \sum_{l=2}^L\br{\frac{ne}{l(k-1)-1}}^{l(k-1)-1}\br{\frac{e^{k+1}(l(k-1)-1)^k}{k^kl}}^{l} \br{\frac{mk^k}{n^ke^k\br{1-2^{1-k}}}}^{l} \nonumber\\
&=(1+o(1)) \sum_{l=2}^L \frac{m^le^{kl-1}(l(k-1)-1)^{l+1}}{n^{l+1}l^l\br{1-2^{1-k}}^l} \nonumber \\
&=\frac{1+o(1)}{n}  \sum_{l=2}^L \br{\frac{e^kd(l(k-1)-1)}{l\br{1-2^{1-k}}}}^l\frac{l(k-1)-1}{e}=O\br{n^{-1}},
\end{align*}
where the last equality follows since $L$ is a fixed number.
\end{claimproof}

\section{The second moment calculation}\label{Sec_second_moment}

\noindent
In this section we prove \Prop~\ref{Prop_ratio_second_first}. To this end, we need to derive an expression for the second moment of the random variables $\Zwni$ for $s\in[\w\nu]$ that is asymptotically tight. As a consequence, we need to put more effort into the calculations than done in prior work on hypergraph-2-colouring (e.g.\cite{Lenka}), where the second moment of $Z$ is only determined up to a constant factor. Part of the proof is based on ideas from \cite{aco_plantsil}, but as we aim for a stronger result, the arguments are extended and adapted to our situation. 

\subsection{The overlap}
For two colour assignments $\sigma, \tau : [n] \to \{0,1\}$ we define the \textit{overlap matrix} 
$$\rho(\sigma, \tau)=
	\begin{pmatrix}
	\rho_{00}(\sigma, \tau) & \rho_{01}(\sigma, \tau) \\ \rho_{10}(\sigma, \tau) & \rho_{11}(\sigma, \tau)
	\end{pmatrix}$$
with entries
	$$ \rho_{ij}(\sigma, \tau) = \frac{1}{n} \cdot | \sigma^{-1}(i) \cap \tau^{-1}(j) | \quad \text{for } i,j\in\{0,1\}.$$
Obviously, it holds that 
$$\rho_{00}(\sigma, \tau) + \rho_{01}(\sigma, \tau) + \rho_{10}(\sigma, \tau) + \rho_{11}(\sigma, \tau)=1.$$
If we further remember the definition from (\ref{rhosigma}), we can alternatively represent $\rho(\sigma,\tau)$ as
$$\rho(\sigma, \tau)=
	\begin{pmatrix}
	\rho_{00}(\sigma, \tau) & \rho(\sigma)-\rho_{00}(\sigma, \tau) \\ \rho(\tau)-\rho_{00}(\sigma, \tau) & 1-\rho(\sigma)-\rho(\tau)+\rho_{00}(\sigma, \tau)
	\end{pmatrix}.$$
To simplify the notation, for a $2\times 2$-matrix $\rho=(\rho_{ij})$ we introduce the shorthands
	$$ \rho_{i \star} = \rho_{i0}+\rho_{i1}, \qquad\rho_{\nix\star}=(\rho_{0 \star}, \rho_{1 \star}),
 \qquad \qquad \rho_{\star j} = \rho_{0j}+\rho_{1j},\qquad \rho_{\star\nix}=(\rho_{\star 0}, \rho_{\star 1}). $$
We let $\cB(n)$ be the set of all overlap matrices $\rho(\sigma,\tau)$ for $\sigma,\tau:[n]\to\{0,1\}$ and $\cB$ denote the set of all probability distributions $\rho=(\rho_{ij})_{i,j\in\{0,1\}}$ on $\{0,1\}^2$. Further, we let $\bar\rho$ be the $2\times 2$-matrix with all entries equal to $1/4$.

\noindent For a given hypergraph $H$ on $[n]$, let $Z^{(2)}_{\rho}(H)$ be the number of pairs $(\sigma,\tau)$ of $2$-colourings of $H$ whose overlap matrix is $\rho$. Analogously to (\ref{fm_functions}), we define the functions $f_2, g_2:\cB\mapsto \mathbb R$ as
	\begin{equation*}
	f_2:\rho\mapsto \cH(\rho) + g_2(\rho) \qquad \text{with} \quad g_2(\rho)=\frac{d}{k} \ln \br{1-\sum \rho_{i\star}^k-\sum \rho_{\star j}^k+\sum \rho_{ij}^k}.
	\end{equation*}
The following lemma states a formula for $\Erw\brk{Z^{(2)}_{\rho}(\hnm)}$ for $\rho\in\cB(n)$ in terms of $f_2(\rho)$. 

\begin{lemma}\label{Lem_sm_rho}
	Let $d^{\prime} \in (0,\infty)$ and set
	\begin{align}\label{eq_sm_rho}
	C_n(d,k)=\sqrt{\frac{32}{(\pi n)^3}}\exp\brk{\frac{d(k-1)}2\frac{2^k-3}{\br{2^{k-1}-1}^2}}.
	\end{align}
	Then for $\rho \in \cB(n)$ we have
		\begin{align}\label{eq_sm_rho_1}
		\Erw \brk{Z_{\rho}^{(2)}(\hnm)}
		\nonumber	&\,
		\sim \sqrt{\frac{2\pi}{n^3}}\prod_{i,j=1}^2(2\pi \rho_{ij})^{-1/2}\exp\brk{nf_2(\rho)}\\
		&\quad \exp\brk{\frac{d(k-1)}2\frac{\sum \rho_{i\star}^{k-1}-\sum \rho_{i\star}^{k}+\sum \rho_{\star j}^{k-1}-\sum \rho_{\star j}^{k}-\sum \rho_{ij}^{k-1}+\sum \rho_{ij}^{k}}{1-\sum \rho_{i\star}^k-\sum \rho_{\star j}^k+\sum \rho_{ij}^k}}.
		\end{align}
	Moreover, if  $\rho \in \cB(n)$ satisfies $\left\|\rho-\bar\rho\right\|_2^2=o(1)$, then
		\begin{align}\label{eq_sm_rho_2}
		\Erw \brk{Z_{\rho}^{(2)}(\hnm)} \sim C_n(d,k)\exp\brk{nf_2(\rho)}.
		\end{align}
\end{lemma}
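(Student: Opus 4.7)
The plan is to mirror the structure of \Lem~\ref{Lem_first_moment_balanced} and its proof, but in four dimensions instead of two. Since in the multi-hypergraph model $\hnm$ the $m$ hyperedges are drawn independently, I would start from
\[
\Erw\brk{Z^{(2)}_{\rho}(\hnm)} \;=\; \binom{n}{\rho_{00}n,\rho_{01}n,\rho_{10}n,\rho_{11}n}\br{1-\frac{\Forb_2(\rho)}{N}}^{m},
\]
where $N=\binom{n}{k}$ and, by inclusion--exclusion, $\Forb_2(\rho)= \sum_i\binom{\rho_{i\star}n}{k}+\sum_j\binom{\rho_{\star j}n}{k}-\sum_{i,j}\binom{\rho_{ij}n}{k}$ is the number of edges of the complete hypergraph that are monochromatic under $\sigma$ or under $\tau$. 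The number of pairs $(\sigma,\tau)$ with overlap matrix $\rho$ is the multinomial, and conditional on $(\sigma,\tau)$ the probability that a uniformly random edge is bichromatic under both equals $1-\Forb_2(\rho)/N$.

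Next, I would expand each falling factorial as in \Lem~\ref{Lem_first_moment_balanced}: using $\binom{xn}{k}=\frac{(xn)^k}{k!}-\frac{k(k-1)(xn)^{k-1}}{2\cdot k!}+\Theta(n^{k-2})$ and the analogous expansion for $\binom nk$, a short computation gives
\[
1-\frac{\Forb_2(\rho)}{N}\;=\;A(\rho)+\frac{k(k-1)}{2n}B(\rho)+\Theta(n^{-2}),
\]
where $A(\rho)=1-\sum_i\rho_{i\star}^k-\sum_j\rho_{\star j}^k+\sum_{i,j}\rho_{ij}^k$ is the argument of $g_2$, and $B(\rho)$ is the polynomial in $\rho$ that collects the $x^{k-1}$-terms, namely $B(\rho)=\sum_i(\rho_{i\star}^{k-1}-\rho_{i\star}^k)+\sum_j(\rho_{\star j}^{k-1}-\rho_{\star j}^k)-\sum_{i,j}(\rho_{ij}^{k-1}-\rho_{ij}^k)$. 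Taking logarithms, multiplying by $m=dn/k$ and using $\ln(A+y/n)=\ln A+\frac{y}{An}+O(n^{-2})$ then yields
\[
m\ln\!\br{1-\tfrac{\Forb_2(\rho)}{N}}\;\sim\;n\,g_2(\rho)+\frac{d(k-1)}{2}\,\frac{B(\rho)}{A(\rho)},
\]
which is exactly the exponential sub-factor appearing in~(\ref{eq_sm_rho_1}). In parallel, I apply Stirling's formula to the multinomial, obtaining
\[
\binom{n}{\rho_{00}n,\rho_{01}n,\rho_{10}n,\rho_{11}n}\;\sim\;\sqrt{\frac{2\pi}{n^3}}\prod_{i,j=1}^{2}(2\pi\rho_{ij})^{-1/2}\exp\brk{n\cH(\rho)},
\]
where the prefactor comes from $(2\pi n)^{1/2}/\prod_{ij}(2\pi\rho_{ij}n)^{1/2}$. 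Combining these two estimates and recalling $f_2=\cH+g_2$ delivers~(\ref{eq_sm_rho_1}).

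For~(\ref{eq_sm_rho_2}) I specialise to $\rho$ with $\|\rho-\bar\rho\|_2=o(1)$. The prefactor $\prod_{i,j}(2\pi\rho_{ij})^{-1/2}$ is continuous, so it tends to $(\pi/2)^{-2}=4/\pi^2$, turning $\sqrt{2\pi/n^3}\cdot 4/\pi^2$ into $\sqrt{32/(\pi n)^3}$. The exponent $B(\rho)/A(\rho)$ is also continuous in $\rho$, and a direct substitution at $\bar\rho$ gives $A(\bar\rho)=(1-2^{1-k})^2$ and $B(\bar\rho)=2^{2-k}(1-3\cdot 2^{-k})$, so that $B(\bar\rho)/A(\bar\rho)=(2^k-3)/(2^{k-1}-1)^2$, matching the exponential in the definition~(\ref{eq_sm_rho}) of $C_n(d,k)$. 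Strictly speaking one must also check that within the window $\|\rho-\bar\rho\|_2=o(1)$ the perturbation of $B/A$ is also $o(1)$, but this is immediate from smoothness. The only place that requires some care is to keep the $\Theta(n^{-2})$ remainder from the second-order expansion of $\Forb_2(\rho)/N$ negligible after multiplication by $m=\Theta(n)$, i.e.\ that it contributes an $O(n^{-1})$ term inside the exponential; this is the main bookkeeping point but is straightforward since $A(\rho)$ stays bounded away from $0$ (both statements are required only for $\rho$ where the leading factor is non-degenerate, and trivially hold at $\bar\rho$).
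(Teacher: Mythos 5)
Your proposal is correct and follows essentially the same route as the paper's own proof: the multinomial count times $(1-\mathcal F(\sigma,\tau)/N)^m$ using independence of the edges, inclusion--exclusion and a second-order expansion of the binomials to extract the $\frac{d(k-1)}{2}B(\rho)/A(\rho)$ correction, Stirling's formula for the multinomial, and then continuity of the prefactor and of $B/A$ at $\bar\rho$ (with $B(\bar\rho)/A(\bar\rho)=(2^k-3)/(2^{k-1}-1)^2$) to obtain the constant $C_n(d,k)$. The numerical evaluations at $\bar\rho$ check out, so nothing further is needed.
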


\begin{proof}
	Let  $\rho=
	\begin{pmatrix}
	\rho_{00} & \rho_{10} \\ \rho_{01} & \rho_{11}
	\end{pmatrix} \in \cB(n)$. Then
	\begin{align}\label{eq_sm_estimates_complete}
	\nonumber\Erw \brk{Z_{\rho}^{(2)}(\hnm)}&=\sum_{\sigma, \tau: \rho(\sigma, \tau)=\rho}\pr\brk{\sigma, \tau \text{ are colourings of }\hnm}=\sum_{\sigma, \tau: \rho(\sigma, \tau)=\rho} \br{1-\frac{\mathcal F(\sigma,\tau)}{N}}^m\\
	&={n \choose \rho_{00}n,\rho_{01}n, \rho_{10}n, \rho_{11}n}\br{1-\frac{\mathcal F(\sigma,\tau)}{N}}^m.
	\end{align}
	where $N={n\choose k}$ and $\mathcal F(\sigma,\tau)$ is the total number of possible monochromatic edges under either $\sigma$ or $\tau$. In the last line, $\sigma$ and $\tau$ are just two arbitrary fixed $2$-colourings with overlap $\rho$ and the equation is valid because the following computation shows that $\mathcal F(\sigma,\tau)$ only depends on $\rho$:	
	\begin{align*}
	\nonumber \mathcal F(\sigma,\tau)&= \sum_{i=0}^1 {\rho_{i\star}n \choose k}+\sum_{j=0}^1 {\rho_{\star j}n \choose k}-\sum_{i,j=0}^1 {\rho_{ij}n \choose k}\\
	\nonumber &=N\brk{\sum_{i=0}^1 \rho_{i\star}^k+\sum_{j=0}^1 \rho_{\star j}^k-\sum_{i,j=0}^1 \rho_{ij}^k}+\frac{k(k-1)}{2k!}n^{k-1}\cdot \\
	&\qquad \brk{\sum_{i=0}^1 \rho_{i\star}^k-\sum_{i=0}^1 \rho_{i\star}^{k-1}+\sum_{j=0}^1 \rho_{\star j}^k-\sum_{j=0}^1 \rho_{\star j}^{k-1}-\sum_{i,j=0}^1  \rho_{ij}^k+\sum_{i,j=0}^1  \rho_{ij}^{k-1}}+\Theta\br{n^{k-2}},
	\end{align*}
	yielding
	\begin{align*}
	\nonumber 1-\frac{F(\sigma,\tau)}{N}	&=1-\sum_{i=0}^1 \rho_{i\star}^k-\sum_{j=0}^1 \rho_{\star j}^k+\sum_{i,j=0}^1 \rho_{ij}^k\\
	&\qquad -\frac{k(k-1)}{2n} \brk{\sum_{i=0}^1 \rho_{i\star}^k-\sum_{i=0}^1 \rho_{i\star}^{k-1}+\sum_{j=0}^1 \rho_{\star j}^k-\sum_{j=0}^1 \rho_{\star j}^{k-1}-\sum_{i,j=0}^1  \rho_{ij}^k+\sum_{i,j=0}^1  \rho_{ij}^{k-1}}+\Theta\br{n^{-2}}.
	\end{align*}
	We proceed as in the proof of \Lem~\ref{Lem_first_moment_balanced} by using that $\ln\br{x-\frac{y}{n}}=\ln(x)+\ln\br{1-\frac{y}{xn}}$ for $x>0, \frac yn<x$ and consequently 
	\begin{align}\label{eq_sm_estimates_1}
	\nonumber m\ln\br{1-\frac{F(\sigma,\tau)}{N}} 
	& =\frac{dn}{k}\left[\ln\br{1-\sum \rho_{i\star}^k-\sum \rho_{\star j}^k+\sum \rho_{ij}^k}\right.\\
	&\nonumber \left. +\ln\br{1-\frac{k(k-1)}{2n}\frac{\sum \rho_{i\star}^{k}-\sum \rho_{i\star}^{k-1}+\sum \rho_{\star j}^{k}-\sum \rho_{\star j}^{k-1}-\sum \rho_{ij}^{k}+\sum \rho_{ij}^{k-1}}{1-\sum \rho_{i\star}^k-\sum \rho_{\star j}^k+\sum \rho_{ij}^k}+\Theta\br{n^{-2}}}\right]\\
	&\nonumber \sim \frac{dn}{k}\ln\br{1-\sum \rho_{i\star}^k-\sum \rho_{\star j}^k+\sum \rho_{ij}^k}\\
	&+\frac{d(k-1)}{2}\frac{\sum \rho_{i\star}^{k-1}-\sum \rho_{i\star}^{k}+\sum \rho_{\star j}^{k-1}-\sum \rho_{\star j}^{k}-\sum \rho_{ij}^{k-1}+\sum \rho_{ij}^{k}}{1-\sum \rho_{i\star}^k-\sum \rho_{\star j}^k+\sum \rho_{ij}^k}+\Theta\br{n^{-1}}.
	\end{align}
	As $\mathcal F(\sigma,\tau)$ does only depend on $\rho$, \eqref{eq_sm_estimates_complete} becomes
	Using Stirling's formula, we get the following approximation for the number of colour assignments with overlap $\rho$: 
	\begin{align}\label{eq_sm_estimates_2}
	{n \choose \rho_{00}n,\rho_{01}n, \rho_{10}n, \rho_{11}n}&\sim \sqrt{2\pi}n^{-3/2}\prod_{i,j=0}^1(2\pi \rho_{ij})^{-1/2}\exp\brk{n\cH(\rho)}.
	\end{align}
        Inserting (\ref{eq_sm_estimates_1}) and (\ref{eq_sm_estimates_2}) into (\ref{eq_sm_estimates_complete}) completes the proof of (\ref{eq_sm_rho_1}).
    
    Equation (\ref{eq_sm_rho_2}) follows from (\ref{eq_sm_rho_1}) because if $\left\|\rho-\bar\rho\right\|_2^2=o(1)$ then
     	\begin{align*}
     	\prod_{i,j=1}^2(2\pi \rho_{ij})^{-1/2}\sim \frac{4}{\pi^2} \quad \text{and} \quad \frac{\sum \rho_{i\star}^{k-1}-\sum \rho_{i\star}^{k}+\sum \rho_{\star j}^{k-1}-\sum \rho_{\star j}^{k}-\sum \rho_{ij}^{k-1}+\sum \rho_{ij}^{k}}{1-\sum \rho_{i\star}^k-\sum \rho_{\star j}^k+\sum \rho_{ij}^k}\sim \frac{2^k-3}{\br{2^{k-1}-1}^2}.
     	\end{align*}
     
\end{proof}

\subsection{Dividing up the interval}
Let $\w,\nu \in \mathbb N$ and $s\in[\w\nu]$. Analogously to the notation in \Sec~\ref{Sec_outline} we introduce the sets
	\begin{align*}
	\Bw&=\cbc{\rho\in\cB(n): \rho_{i \star}, \rho_{\star i} \in \left[\frac 12-\frac{\w}{\sqrt{n}},\frac 12+\frac{\w}{\sqrt{n}}\right)\quad \mbox{ for $i\in \{0,1\}$}}
	\end{align*}
and
	\begin{align*}
	\Bwni&=\cbc{\rho\in\Bw: \rho_{i \star}, \rho_{\star i} \in \left[\rwni-\frac{1}{\nu\sqrt{n}},\rwni+\frac{1}{\nu\sqrt{n}}\right)\quad \mbox{ for $i\in \{0,1\}$}},
	\end{align*}
imposing constraints on the overlap matrix $\rho$ insofar as the colour densities resulting from its projection on each colouring must not deviate too much from $1/2$ in the set $\Bw$ and from $\rwni$ in the set $\Bwni$. By the linearity of expectation, for any $s\in[\w\nu]$ we have
	\begin{align*}
	\Erw \brk{\Zwni(\hnm)^2}&=
		\sum_{\rho\in\Bwni}\Erw\brk{Z^{(2)}_{\rho}(\hnm)}.
	\end{align*}
We are going to show that the expression on the right hand side of this equation is dominated by the contributions with $\rho$ ``close to'' $\bar\rho$ in terms of the euclidian norm. More precisely, for $\eta>0$ we introduce the set
	\begin{align*}
	\Bwnie&=\cbc{\rho\in\Bwni:\norm{\rho-\bar\rho}_2\leq\eta}
	\end{align*}
and define 
	$$\Zwnie(\hnm)=\sum_{\rho\in\Bwnie}Z^{(2)}_{\rho}(\hnm).$$
The following proposition reveals that it suffices to consider overlap matrices $\rho$ such that
	$\norm{\rho-\bar\rho}_2\leq n^{-3/8}$.
Here, the number $3/8$ is somewhat arbitrary, any number smaller than $1/2$ would do.



 

\begin{proposition} \label{Prop_sm_eta}
Let $k \ge 3$ and $\w,\nu \in \mathbb N$. If $d^{\prime}/k< 2^{k-1}\ln 2-2$, than for every $s\in\brk{\w\nu}$ we have $$\Erw \brk{\Zwni(\hnm)^2} \sim \Erw \brk{\Zwnis(\hnm)}. $$
\end{proposition}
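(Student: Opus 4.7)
The plan is to expand $\Erw \brk{\Zwni(\hnm)^2}$ by linearity of expectation as $\sum_{\rho \in \Bwni} \Erw \brk{Z^{(2)}_\rho(\hnm)}$, split the sum at the cutoff $\norm{\rho - \bar\rho}_2 = n^{-3/8}$, and show that the tail contributions from $\rho \in \Bwni \setminus \Bwnis$ are $o(\Erw \brk{\Zwnis(\hnm)})$. Since the "near" part is by construction equal to $\Erw \brk{\Zwnis(\hnm)}$, this will give the claim.

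The analytic workhorse is the structure of $f_2$ on $\Bw$. The marginal constraints in $\Bw$ pin $\rho_{i\star}, \rho_{\star j}$ within $O(\w/\sqrt n)$ of $1/2$, so up to perturbations of order $n^{-1/2}$ the overlap matrix is parametrized by a single scalar $x = \rho_{00} \in [0,1/2]$ (with $\rho_{11} = x$ and $\rho_{01} = \rho_{10} = 1/2 - x$). The resulting restriction $\tilde f_2(x) = f_2(\rho(x))$ satisfies $\tilde f_2'(1/4) = 0$ by symmetry, and a direct computation of $\tilde f_2''(1/4)$ shows strict local maximality at $x = 1/4$ under our density condition $d^{\prime}/k < 2^{k-1} \ln 2 - 2$. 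The key technical input I would establish is the global strict inequality $\tilde f_2(x) < \tilde f_2(1/4)$ on $[0,1/2] \setminus \{1/4\}$ together with the quantitative bound $\tilde f_2(1/4) - \tilde f_2(x) \ge \min\{\kappa(x - 1/4)^2,\, c_0\}$ for suitable constants $\kappa, c_0 > 0$. This follows the Achlioptas--Moore one-dimensional optimisation; the chosen regime $d^{\prime}/k < 2^{k-1}\ln 2 - 2$ lies comfortably below $\dsec$, where the competing maximum first appears, so the required uniform domination holds, and a perturbation argument extends it from the exact-$1/2$-marginal slice to $\Bw$.

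Given this, the tail sum splits into two ranges. In the annulus $n^{-3/8} < \norm{\rho - \bar\rho}_2 \le \eta_0$ (for $\eta_0$ a small absolute constant), the local quadratic bound yields $n f_2(\rho) - n f_2(\bar\rho) \le -\Omega(n^{1/4})$; combined with the polynomial prefactor in Lemma~\ref{Lem_sm_rho} and the polynomially many lattice points in $\Bwni$, this provides a factor $\exp[-\Omega(n^{1/4})]$ against $\Erw \brk{\Zwnis(\hnm)}$, which itself is evaluated by Gaussian summation at the $O(n^{-1/2})$-scale of $\rho - \bar\rho$. In the complementary range $\norm{\rho - \bar\rho}_2 > \eta_0$, the uniform gap $c_0$ from the global analysis gives an even stronger $\exp[-\Omega(n)]$ suppression. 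Adding the two ranges yields $\Erw \brk{\Zwni(\hnm)^2} = \Erw \brk{\Zwnis(\hnm)} (1 + o(1))$ as required.

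The main obstacle is the global one-dimensional analysis of $\tilde f_2$: the local Hessian at $x = 1/4$ is a routine symbolic calculation, but excluding non-trivial local maxima on $(0, 1/2) \setminus \{1/4\}$ uses the density condition $d^{\prime}/k < 2^{k-1}\ln 2 - 2$ in an essential way and must be carried out by a careful, somewhat brute-force analysis of $\tilde f_2$ and its derivative. Everything else — the Taylor expansion, the Gaussian summation, and the perturbation from the $1/2$-marginal slice to $\Bw$ — is routine by comparison.
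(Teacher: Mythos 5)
Your proposal is correct and matches the paper's own argument in all essentials: the paper also splits the sum over $\Bwni$ at $\norm{\rho-\bar\rho}_2=n^{-3/8}$, uses a quadratic Taylor bound on $f_2$ near $\bar\rho$ to get an $\exp[-\Omega(n^{1/4})]$ suppression in the intermediate range, and handles constant-order deviations by reducing (via the pinned marginals in $\Bw$, up to an $O(\w^2/n)$ error) to the one-dimensional Achlioptas--Moore function $\bar f_2$, whose global strict maximality at $\bar\rho$ under $d/k\le 2^{k-1}\ln2-2$ the paper imports from \cite[Lemma~4.11]{Bapst} rather than reproving by hand as you plan to. The only cosmetic difference is that the paper lower-bounds the near part by a single term $\Erw[Z^{(2)}_{\hat\rho}]$ instead of a full Gaussian summation, which suffices for the comparison.
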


\noindent To prove this proposition, we need the following lemma. 

\begin{lemma}\label{Lem_sm_norm}
 Let $d/k< 2^{k-1}\ln 2-2$ and $C_n(d,k)$ as defined in \Lem~\ref{Lem_sm_rho}. Set 
$$ B(d,k)=4\br{1-\frac{d(k-1)}{2^{k-1}-1}}.$$
	\begin{enumerate}
		\item If $\rho \in \Bw$ satisfies $\|\rho-\bar\rho\|_2\le n^{-3/8}$ then
		\begin{align}\label{eq_sm_norm_bal}
		\Erw \brk{Z_{\rho}^{(2)}(\hnm)} &\sim C_n(d,k)\exp\brk{nf_2(\bar\rho)-n\frac{B(d,k)}2\|\rho-\bar\rho\|_2^2}.
		\end{align}
		\item There exists $A=A(d,k) >0$ such that if $ \rho \in \Bw$ satisfies $\| \rho - \bar\rho\|_2 >n^{-3/8}$, then
		\begin{align}\label{eq_sm_norm_big}
		\Erw \brk{Z^{(2)}_{\rho}(\hnm)} = O\br{\exp \brk{n f_2\br{\bar\rho}- An^{1/4}}}.
		\end{align}		
	\end{enumerate}
\end{lemma}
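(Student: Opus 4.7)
The plan is to combine Lemma~\ref{Lem_sm_rho} with a Taylor expansion of $f_2$ around $\bar\rho$ for part (i), and with a global concavity argument for part (ii).

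For part (i), since $\|\rho-\bar\rho\|_2\le n^{-3/8}=o(1)$, the asymptotic formula (\ref{eq_sm_rho_2}) applies and gives $\Erw\brk{Z^{(2)}_\rho(\hnm)}\sim C_n(d,k)\exp\brk{nf_2(\rho)}$. The point $\bar\rho$ is a critical point of $f_2$ (by symmetry of $\cH$ and $g_2$ under swapping the two colours in either of $\sigma,\tau$), so Taylor's theorem yields
\[
nf_2(\rho)\;=\;nf_2(\bar\rho)+\tfrac{n}{2}(\rho-\bar\rho)^{T}H(\rho-\bar\rho)+n\cdot O\br{\|\rho-\bar\rho\|_2^3},
\]
where $H$ is the Hessian of $f_2$ at $\bar\rho$. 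A direct (if tedious) computation of $H$ from the explicit forms of $\cH$ and $g_2=\tfrac{d}{k}\ln(\cdot)$, using the simplifications available at $\bar\rho$ (for instance $1-\sum\rho_{i\star}^{k}-\sum\rho_{\star j}^{k}+\sum\rho_{ij}^{k}\big|_{\bar\rho}=(1-2^{1-k})^2$), together with the marginal constraints defining $\Bw$ (which effectively restrict the admissible variation to the balanced direction $\rho_{00}=\rho_{11}$, $\rho_{01}=\rho_{10}$, up to errors of order $\w/\sqrt n$), should give $(\rho-\bar\rho)^{T}H(\rho-\bar\rho)=-B(d,k)\|\rho-\bar\rho\|_2^2$. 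The cubic remainder contributes $n\cdot O(n^{-9/8})=O(n^{-1/8})=o(1)$ to the exponent, which is absorbed in the asymptotic equivalence, giving (\ref{eq_sm_norm_bal}).

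For part (ii), I would use the upper bound in (\ref{eq_sm_rho_1}). The prefactor $\prod_{ij}(2\pi\rho_{ij})^{-1/2}$ and the additional exponential factor involving $\rho_{i\star}^{k-1},\rho_{\star j}^{k-1},\rho_{ij}^{k-1}$ are both uniformly $O(1)$ on $\Bw$, because once the marginals are close to $1/2$ every $\rho_{ij}$ stays in a compact subset of $(0,1)$. Thus it suffices to show $nf_2(\rho)\le nf_2(\bar\rho)-\Omega(n^{1/4})$ for $\rho\in\Bw$ with $\|\rho-\bar\rho\|_2>n^{-3/8}$. I would split into two subranges. For $n^{-3/8}<\|\rho-\bar\rho\|_2\le\eta_0$ with a fixed small $\eta_0=\eta_0(d,k)$, the Taylor bound from part (i), with the cubic error absorbed into a halved quadratic, yields $nf_2(\rho)\le nf_2(\bar\rho)-\tfrac{B(d,k)}{4}n^{1/4}$. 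For $\|\rho-\bar\rho\|_2>\eta_0$ with $\rho\in\Bw$, a compactness argument should give a fixed gap $f_2(\rho)\le f_2(\bar\rho)-\gamma$ with $\gamma=\gamma(d,k,\eta_0)>0$, hence $nf_2(\rho)\le nf_2(\bar\rho)-\gamma n$, far exceeding the required $An^{1/4}$.

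The main obstacle will be the compactness step in the second subrange: proving that $\bar\rho$ is the unique global maximizer of $f_2$ on the set of probability matrices with marginals in $[1/2-\w/\sqrt n,\,1/2+\w/\sqrt n)$, with a strictly negative Hessian. After the marginal constraints this essentially reduces to the one-parameter analysis along the balanced direction $\rho_{00}=\rho_{11}=1/4+t$, $\rho_{01}=\rho_{10}=1/4-t$, which is the classical second-moment computation for hypergraph $2$-colouring (cf.\ Achlioptas--Moore \cite{Achlioptas} and Coja-Oghlan--Zdeborov\'a \cite{Lenka}); the hypothesis $d/k<2^{k-1}\ln 2-2$ is precisely what guarantees that $t=0$ remains the strict global maximum throughout the admissible range of $t$.
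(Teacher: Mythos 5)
Your proposal follows essentially the same route as the paper's proof: part (i) is the Taylor expansion of $f_2$ at the critical point $\bar\rho$ applied on top of (\ref{eq_sm_rho_2}), and part (ii) is the same split into a moderate-deviation regime (where the quadratic decay beats $n^{1/4}$) and a macroscopic regime handled by global maximality of $\bar\rho$. For the macroscopic regime the paper does precisely what you defer to the literature: it proves \Lem~\ref{Lem_barf}, using the marginal constraints in $\Bw$ to replace $f_2(\rho)$ by the univariate function $\bar f_2(\rho_{00})$ up to an $O(\w^2/n)$ error, and then quotes \cite[Lemma 4.11]{Bapst} for the fact that $\bar f_2$ attains its unique global maximum at the balanced point when $d/k\le 2^{k-1}\ln 2-2$. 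Be aware that your ``compactness argument'' carries no content by itself: compactness does not tell you where the maximum sits, and the entire difficulty is the cited analytic fact about $\bar f_2$ --- which, to your credit, you correctly single out as the main obstacle and attribute to the classical second-moment analysis.

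Two smaller points. First, your justification that the prefactors in (\ref{eq_sm_rho_1}) are uniformly $O(1)$ on $\Bw$ ``because every $\rho_{ij}$ stays in a compact subset of $(0,1)$'' is false: balanced marginals do not keep the entries away from $0$ (e.g.\ $\rho_{00}\approx 0$, $\rho_{01}\approx\rho_{10}\approx\tfrac12$ is admissible). The correct fix is that the multinomial prefactor is at most polynomial in $n$ and the extra exponential factor is $O(1)$ because the denominator $1-\sum\rho_{i\star}^k-\sum\rho_{\star j}^k+\sum\rho_{ij}^k$ stays bounded away from $0$ on $\Bw$; a polynomial factor is harmless against the $\exp[-\Omega(n^{1/4})]$ and $\exp[-\Omega(n)]$ gaps. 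Second, the Hessian of $f_2$ at $\bar\rho$ is not a multiple of the identity on the tangent space (the marginal-changing directions carry different eigenvalues than the balanced direction), so writing the quadratic term as a single coefficient times $\|\rho-\bar\rho\|_2^2$ is only accurate up to an $e^{O(\w^2)}$ correction coming from the $O(\w/\sqrt n)$ marginal fluctuations; the paper's own expansions (\ref{eq_sm_h})--(\ref{eq_sm_g2}) share this imprecision, and it is harmless because the lemma is only ever invoked up to such $\w$-dependent constant factors (see the proof of \Prop~\ref{Prop_sm_eta}), but you should not claim a genuine asymptotic equivalence with that exact constant without restricting to the balanced direction.
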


\begin{proof}
To prove (\ref{eq_sm_norm_bal}), we observe that if $\rho \in \Bw$  satisfies $\|\rho-\bar\rho\|_2\le n^{-3/8}$, by Taylor expansion around $\bar\rho$ (where $\cH$ and $g_2$ are maximized) we obtain
	\begin{align}
	\label{eq_sm_h}\cH(\rho) &= \cH\br{\bar\rho}-2\|\rho-\bar\rho\|_2^2+ o\br{n^{-1}} \text{ and }\\
	\label{eq_sm_g2}g_2(\rho)&=g_2\br{\bar\rho}-\frac{2d(k-1)}{2^{k-1}-1}\|\rho-\bar\rho\|_2^2+ o\br{n^{-1}}.
	\end{align}
	Inserting this into (\ref{eq_sm_rho_2})  yields (\ref{eq_sm_norm_bal}).

To prove (\ref{eq_sm_norm_big}), we distinguish two cases.

\noindent \textbf{Case 1:} $\|\rho-\bar\rho\|_2=o(1)$: We observe that similarly to (\ref{eq_sm_h}) and (\ref{eq_sm_g2}) there exists a constant $A=A(d,k) >0$ such that
$$ f_2(\rho) \leq f_2\br{\bar\rho} - A\| \rho - \bar\rho \|_2^2 . $$

\noindent Hence, if $\| \rho - \bar\rho \|_2 > n^{-3/8}$ and $\|\rho-\bar\rho\|_2=o(1)$, then
\begin{align}\label{eq_sm_norm_big_1}
 \Erw \brk{Z_{\rho}^{(2)}(\hnm)}= O \br{n^{-3/2}} \exp \brk{n f_2(\rho)} \leq \exp \brk{n f_2\br{\bar\rho}- A n^{1/4}}.
\end{align}

\noindent \textbf{Case 2:} $\|\rho-\bar\rho\|_2=c$ where $c>0$ is a constant independent of $n$:
We consider the function $\bar f_2:\brk{0,\frac 12} \mapsto \mathbb R$  that results from $f_2$ by setting $\rho_{i \star}=\rho_{\star i}=1/2$. This function was introduced by Achlioptas and Moore \cite{Achlioptas} and has been studied at different places in the literature on random hypergraph 2-colouring. The following lemma quantifies the largest possible deviation of $f_2$ and $\bar f_2$.

\begin{lemma}\label{Lem_barf}
Let $\bar f_2: [0,1]\to \mathbb R$ be defined as
$$\bar f_2(\rho)=\ln 2+ \cH\br{2\rho}+\frac dk \ln\br{1-2^{2-k}+2\rho^k+2\br{\frac 12-\rho}^k}.$$
Then for $\rho=(\rho_{ij})\in \Bw$ we have
$$\exp\brk{nf_2(\rho)}\sim\exp\brk{n\bar f_2(\rho_{00})+O\br{\w^2}}.$$
\end{lemma}
\begin{proof}
	For $\rho\in\Bw$ we consider the function 
	$$\zeta(\rho)=f_2(\rho)-\bar f_2(\rho_{00})$$
	and approximate $\zeta(\rho)$ by a Taylor expansion around $\rho=\bar\rho$. As $f_2(\bar \rho)=\bar f_2(\bar \rho_{00})$ and $\frac{\partial{f_2}}{\partial \rho_{ij}}(\bar \rho)=0$ for $i,j \in \{0,1\}$ and $\bar f_2'(\bar\rho_{00})=0$, we have
	$\zeta(\rho)=C\cdot \|\rho-\bar \rho\|_2^2=O\br{\frac{\w}{\sqrt n}}$ for some constant $C$. Thus,
	$$\max_{\rho\in\Bw}|\zeta(\rho)|=O\br{\frac{\w^2}{n}},$$
	yielding the assertion.
	
\end{proof}

In \cite[Lemma 4.11]{Bapst} the function $\bar f_2$ is analysed and it is shown that in the regime $d/k\le 2^{k-1}\ln 2-2$ it takes its {\em global} maximum at $\rho=\bar \rho$ and $\bar f_2(\rho)<\bar f_2(\bar \rho)$ for all $\rho \in\brk{0,\frac 12}$ with $\rho\ne\bar\rho$ independent of $n$. Combining this with \Lem~\ref{Lem_barf} we find that there exists a constant $A'=A'(d,k)>0$ such that
	\begin{align*}
 f_2(\rho) &= f_2\br{\bar \rho} - A' + O\br{\frac{\w^2}{n}},
	\end{align*}
where we used that $f_2(\bar\rho)=\bar f_2(\bar \rho)$. 

\noindent Thus,
\begin{align}\label{eq_sm_norm_1}
\Erw \brk{Z_{\rho}^{(2)}(\hnm)}= O \br{n^{-3/2}} \exp \brk{n f_2(\rho)} \leq \exp \brk{n f_2\br{\bar\rho}- A'n+O\br{\w^2}}.
\end{align}

\noindent As $\exp \brk{n f_2\br{\bar\rho}- A'n+O\br{\w^2}}=o\br{ \exp \brk{n f_2\br{\bar\rho}- A n^{1/4}}}$, equation (\ref{eq_sm_norm_1}) together with (\ref{eq_sm_norm_big_1}) completes the proof of (\ref{eq_sm_norm_big}). 
\end{proof}

\begin{proof}[Proof of \Prop~\ref{Prop_sm_eta}] 
We let $s\in[\w\nu]$. For a $\hat\rho \in \Bwnis$ we have $\| \hat\rho- \bar\rho\|_2 =O\br{\frac{\w}{\sqrt n}}$ and obtain from the first part of \Lem~\ref{Lem_sm_norm} that
	\begin{align}\label{eq_sm_eta_1}
	\Erw \brk{\Zwnis(\hnm)^2} \geq \Erw \brk{Z^{(2)}_{\hat\rho} (\hnm)} \sim C_n(d,k) \exp \brk{n f_2\br{\bar\rho}+O(\w^2)}.
	\end{align}
On the other hand, because $|\Bwni|$ is bounded by a polynomial in $n$, the second part of \Lem~\ref{Lem_sm_norm} yields
\begin{align} \label{eq_sm_eta_2}
\sum_{\rho \in \Bwni: \| \rho - \bar\rho \|_2 > n^{-3/8} } \Erw \brk{Z_{\rho}^{(2)}(\hnm)} &
 = O\br{\exp \brk{n f_2\br{\bar\rho}-A n^{1/4} + O(\ln n)}}. 
\end{align} 
Combining~(\ref{eq_sm_eta_1}) and~(\ref{eq_sm_eta_2}), we obtain
 \begin{align*} \Erw \brk{\Zwni(\hnm)^2} &\sim  \sum_{\substack{\rho \in \Bwnis}} \Erw \brk{Z_{\rho}^{(2)}(\hnm)}= \Erw \brk{\Zwnis(\hnm)} 
\end{align*}
as claimed.

\end{proof}

\subsection{The leading constant}
In this section we compute the contribution of overlap matrices $\rho \in \Bwnis$. In a first step we show that for $\rho\in \Bwnis$ we can approximate $f_2$ by a function $f_2^s$ that results from $f_2$ by (approximately) fixing the marginals $\rho_{i\star}, \rho_{\star j}$ for $i,j\in\{0,1\}$.

\begin{lemma}\label{Lem_sm_shifted}
Let $k \ge 3, \w, \nu \in\mathbb N$ and $C_n(d,k)$ as in (\ref{eq_sm_rho}). For $s\in[\w\nu]$ remember $\rwni$ from (\ref{rwni}). 
Let $f_2^s:\cB \to \mathbb R$ be defined as
$$f_2^s: \rho \mapsto \cH(\rho)+\frac dk\ln\br{1-2{\rwni}^k-2(1-\rwni)^k+\sum_{i,j=0}^1 \rho_{ij}}.$$
Then for $\rho \in \Bwnis$ it holds that
	\begin{align*}
	\Erw \brk{Z_{\rho}^{(2)}(\hnm)} &\sim C_n(d,k)\exp\brk{nf_2^s(\rho)+O\br{\frac{\w}{\nu}}}.
	\end{align*}
\end{lemma}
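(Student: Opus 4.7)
The plan is to reduce Lemma~\ref{Lem_sm_shifted} to equation~(\ref{eq_sm_rho_2}) of Lemma~\ref{Lem_sm_rho} and then to absorb the discrepancy between $f_2$ and $f_2^s$ into the additive $O(\omega/\nu)$ error in the exponent. Since every $\rho\in\Bwnis$ satisfies $\|\rho-\bar\rho\|_2\le n^{-3/8}=o(1)$, equation~(\ref{eq_sm_rho_2}) applies directly and gives $\Erw\brk{Z_\rho^{(2)}(\hnm)}\sim C_n(d,k)\exp\brk{nf_2(\rho)}$ uniformly in $\rho\in\Bwnis$. It is therefore enough to show that $n|f_2(\rho)-f_2^s(\rho)|=O(\omega/\nu)$ uniformly on this set.

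Because the entropy $\cH(\rho)$ appears in both $f_2$ and $f_2^s$, the difference reduces to $g_2(\rho)-g_2^s(\rho)$. Writing the two arguments of the logarithm as
$$A(\rho)=1-\sum_i\rho_{i\star}^k-\sum_j\rho_{\star j}^k+\sum_{i,j}\rho_{ij}^k\qquad\text{and}\qquad B(\rho)=1-2\rwni^k-2(1-\rwni)^k+\sum_{i,j}\rho_{ij}^k,$$
one checks that for $k\ge 3$ both $A(\rho)$ and $B(\rho)$ stay bounded below by a positive constant on $\Bwnis$ (their common limit is $1-2^{2-k}+4^{1-k}>0$). Lipschitz continuity of the logarithm on a compact interval bounded away from $0$ then reduces the task to bounding $\bigl|\sum_i\rho_{i\star}^k+\sum_j\rho_{\star j}^k-2\rwni^k-2(1-\rwni)^k\bigr|$.

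The crucial step is to show that this last quantity is of order $\omega/(\nu n)$ rather than the naive $1/(\nu\sqrt n)$. Setting $\rho_{0\star}=\rwni+\eps_0$, the identity $\rho_{0\star}+\rho_{1\star}=1$ forces $\rho_{1\star}=(1-\rwni)-\eps_0$, and $\rho\in\Bwni$ gives $|\eps_0|\le 1/(\nu\sqrt n)$. A second-order Taylor expansion yields
$$\rho_{0\star}^k+\rho_{1\star}^k-\rwni^k-(1-\rwni)^k=k\bigl[\rwni^{k-1}-(1-\rwni)^{k-1}\bigr]\eps_0+O(\eps_0^2).$$
The key observation is that the first-order coefficient is itself small: since $|\rwni-1/2|\le\omega/\sqrt n$, the mean value theorem gives $|\rwni^{k-1}-(1-\rwni)^{k-1}|=O(\omega/\sqrt n)$. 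Hence the linear term is of size $O(\omega/(\nu n))$ while the quadratic term is $O(1/(\nu^2 n))=O(\omega/(\nu n))$ (using $\omega\ge 1$). The identical argument applied to the column marginals $\rho_{\star 0},\rho_{\star 1}$ and summation give $|A(\rho)-B(\rho)|=O(\omega/(\nu n))$, whence $n|f_2(\rho)-f_2^s(\rho)|=O(\omega/\nu)$ as required.

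The main thing to watch is precisely the appearance of this factor $\omega/\sqrt n$ in the Taylor coefficient: without the cancellation coming from $\rho_{0\star}+\rho_{1\star}=1$ combined with the closeness of $\rwni$ to $1/2$, the crude bound $n\cdot k\cdot O(1/(\nu\sqrt n))=O(\sqrt n/\nu)$ would be much too weak. Everything else — Lipschitz control of the logarithm, routine Taylor remainder estimates, and uniform positivity of $A(\rho)$ and $B(\rho)$ — is elementary.
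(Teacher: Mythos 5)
Your proof is correct and takes essentially the same route as the paper: both start from (\ref{eq_sm_rho_2}) on $\Bwnis$ and then bound $n\left|f_2(\rho)-f_2^s(\rho)\right|=O(\w/\nu)$ via the same cancellation — the first-order coefficient $k\brk{\rwni^{k-1}-(1-\rwni)^{k-1}}$ is $O(\w/\sqrt n)$ because $\rwni$ is within $\w/\sqrt n$ of $1/2$ — which the paper expresses as the derivative bound $\partial\zeta^s/\partial\alpha,\partial\zeta^s/\partial\beta=O(\w/\sqrt n)$ followed by the fundamental theorem of calculus, and you express as an explicit second-order Taylor expansion in the marginal deviations. (You also correctly interpret the paper's $\sum_{i,j}\rho_{ij}$ in the definition of $f_2^s$ as the intended $\sum_{i,j}\rho_{ij}^k$.)
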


\begin{proof}
Equation (\ref{eq_sm_rho_2}) of \Lem~\ref{Lem_sm_rho} yields that
\begin{align}\label{eq_sm_shifted}
\Erw \brk{Z_{\rho}^{(2)}(\hnm)} \sim C_n(d,k)\exp\brk{nf_2(\rho)}.
\end{align}
\noindent Analogously to the proof of \Lem~\ref{Lem_barf} we define 
	$$\zeta^s(\rho)=f_2(\rho)- f_2^s(\rho).$$
To bound $\zeta^s(\rho)$ from above for all $\rho \in \Bwnis$, we observe that we can express the function $f_2$ by setting $\rho_{0\star}=\rwni+\alpha$ and $\rho_{\star 0}=\rwni+\beta$, where $|\alpha|, |\beta| \le \frac{1}{\nu\sqrt n}$ and thus 
\begin{align*}
f_2: \rho \mapsto \cH(\rho)+\frac dk\ln\br{1-\br{\rwni+\alpha}^k-\br{\rwni+\beta}^k-\br{1-\rwni-\alpha}^k-\br{1-\rwni-\beta}^k+\sum_{i,j=0}^1 \rho_{ij}}.
\end{align*}
As we are only interested in the difference between $f_2$ and $f_2^s$, we can reparametrise $\zeta^s$ as
\begin{align*}
\zeta^s(\alpha, \beta)=\frac dk \ln\br{\frac{1-\br{\rwni+\alpha}^k-\br{\rwni+\beta}^k-\br{1-\rwni-\alpha}^k-\br{1-\rwni-\beta}^k+\sum_{i,j=0}^1 \rho_{ij}}{1-2{\rwni}^k-2(1-\rwni)^k+\sum_{i,j=0}^1 \rho_{ij}}}.
\end{align*} 
Differentiating and simplifying the expression yields
	$\frac{\partial \zeta^s}{\partial \alpha}(\alpha, \beta), \frac{\partial \zeta^s}{\partial \beta}(\alpha, \beta)=O\br{\frac{\w}{\sqrt n}}$. 
As we are interested in $\rho\in\Bwnis$ and $|\Bwnis|\le \frac{2}{\nu\sqrt n}$ according to the fundamental theorem of calculus it follows for every $s \in [\w\nu]$ that
	$$\max_{\rho\in\Bwnis}|\zeta^s(\rho)|=\int_{-\br{\nu\sqrt n}^{-1}}^{\br{\nu \sqrt n}^{-1}} O\br{\frac{\w}{\sqrt n}} d\alpha=O\br{\frac{\w}{n\nu}}.$$
	
\noindent Combining this with (\ref{eq_sm_shifted}) yields the assertion.
\end{proof}

\begin{proposition}\label{Prop_second_moment_balanced_exact}
	Let $k \ge 3, \w,\nu \in\mathbb N$ and $d^{\prime}(k-1)<\br{2^{k-1}-1}^2$. Then for all $s\in[\w\nu]$ we have
	\begin{align*}
	&\Erw\brk{\Zwnis(\hnm)} \sim_{\nu} \br{|\Awni|\sqrt{\frac{2}{\pi n}}\exp\brk{nf_1\br{\rwni}}}^2\cdot \\
	& \qquad \qquad \qquad \qquad \qquad \qquad \qquad 
	\exp\brk{\frac{d(k-1)}2\frac{2^k-3}{\br{2^{k-1}-1}^2}}\br{1-\frac{d(k-1)}{\br{2^{k-1}-1}^2}}^{-1/2}.
	\end{align*}
\end{proposition}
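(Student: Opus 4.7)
The plan is to apply \Lem~\ref{Lem_sm_shifted} and then evaluate the resulting sum $\sum_{\rho\in\Bwnis}\exp[nf_2^s(\rho)]$ by Gaussian summation. First I would parametrize an overlap matrix $\rho$ by three lattice variables $(a,b,c)$ via $\rho_{00}=\tfrac14+a$, $\rho_{01}=\tfrac14+b$, $\rho_{10}=\tfrac14+c$, $\rho_{11}=\tfrac14-a-b-c$ (each with step $1/n$), and then change to $(r,p,q)=(a,\,a+b,\,a+c)$. This is a unimodular map, so the lattice structure is preserved; the membership condition $\rho\in\Bwnis$ now reads $|p-y_0|,|q-y_0|\le(\nu\sqrt n)^{-1}$ with $y_0=\rwni-\tfrac12$, while $r$ is constrained only by the outer truncation $\|\rho-\bar\rho\|_2\le n^{-3/8}$.

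Next I would Taylor-expand $f_2^s$ around $\bar\rho$. A direct calculation shows the gradient vanishes at $\bar\rho$ and the Hessian in the $(a,b,c)$-basis equals $-4\alpha(I_3+J_3)$, where $J_3$ is the $3\times 3$ all-ones matrix and $\alpha:=1-d(k-1)/(2^{k-1}-1)^2>0$ by the hypothesis. The $\cH$-contribution is the standard $-4(I_3+J_3)$; the $g_2^s$-contribution $\frac{4d(k-1)}{(2^{k-1}-1)^2}(I_3+J_3)$ emerges from the clean identity $1-2^{2-k}+4^{1-k}=(1-2^{1-k})^2$ for the denominator $F(\bar\rho)$. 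In the $(r,p,q)$-variables the resulting quadratic form becomes $4(r-(p+q)/2)^2+p^2+q^2$, and the cubic remainder is uniformly of order $n\cdot n^{-9/8}=o(1)$ on the truncated region.

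The Gaussian summation then decouples. The $r$-sum is an unrestricted Gaussian of width $\Theta(n^{-1/2})\ll n^{-3/8}$ and contributes $\sqrt{n\pi/(8\alpha)}$. Each of the $p$- and $q$-sums over a window of width $2(\nu\sqrt n)^{-1}$ centered at $y_0$ contributes $|\Awni|\exp[-2\alpha ny_0^2]$ up to a multiplicative factor $\exp[O(\w/\nu)]\to 1$ as $\nu\to\infty$. Multiplying by $C_n(d,k)$ from \Lem~\ref{Lem_sm_shifted}, the three constants combine as
\[
C_n(d,k)\sqrt{n\pi/(8\alpha)}=\frac{2}{\pi n}\,\alpha^{-1/2}\exp\brk{\frac{d(k-1)(2^k-3)}{2(2^{k-1}-1)^2}},
\]
which is exactly the constant required by the proposition.

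The main obstacle is the delicate exponent matching: the Gaussian sums produce $\exp[nf_2^s(\bar\rho)-4\alpha ny_0^2]$, and this must coincide with $\exp[2nf_1(\rwni)]$ up to $o(1)$. This reduces to the identity
\[
f_2^s(\bar\rho)-2f_1(\rwni)=4\alpha y_0^2+O(y_0^4),
\]
which I would establish by independently Taylor-expanding $f_1(\tfrac12+\delta)$ and $f_2^s(\bar\rho)$ (viewed as a function of $\rwni$) around $\rwni=\tfrac12$. The $\delta^2$-coefficients match by virtue of the algebraic simplification $4(2^{k-1}-1)-2^{k+1}=-4$, and the remainder satisfies $ny_0^4=O(\w^4/n)=o(1)$, which is absorbed in the $\sim_\nu$ limit. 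Once this identity is in place, the $\exp[-4\alpha ny_0^2]$ factor from the marginal Gaussian sums precisely cancels the excess in $f_2^s(\bar\rho)$, leaving $\exp[2nf_1(\rwni)]$ and completing the proof.
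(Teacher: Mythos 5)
Your proposal is correct, and it follows the same overall strategy as the paper (reduce to $f_2^s$ via \Lem~\ref{Lem_sm_shifted}, then a Laplace/Gaussian summation over the overlap lattice), but the Gaussian computation is organized differently. The paper first conditions on the two marginal densities $\rho^0,\rho^1\in\Awni$ and expands $f_2^s$ around the \emph{product} overlap $\rho^s$ built from $\rwni$; this recentering makes the leading exponent immediate, since $f_2^s(\rho^s)=2f_1(\rwni)$ exactly, and leaves only a one-dimensional Gaussian sum in the single remaining overlap parameter $\boldsymbol\eps=(\eps,-\eps,-\eps,\eps)$ per marginal pair, with $\|\rho-\rho^s\|_2=\|\boldsymbol\eps\|_2+o(n^{-1/2})$ in the $\nu\to\infty$ limit. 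You instead expand around the symmetric point $\bar\rho$ in all three free coordinates, diagonalize the form $a^2+b^2+c^2+(a+b+c)^2$ via the unimodular change $(r,p,q)=(a,a+b,a+c)$ into $4\br{r-(p+q)/2}^2+p^2+q^2$, and read off the factor $|\Awni|^2\exp\brk{-4\alpha ny_0^2}$ from the two marginal windows; the price is the extra exponent-matching identity $f_2^s(\bar\rho)-2f_1(\rwni)=4\alpha y_0^2+O(y_0^4)$, which indeed holds (the entropy part contributes $4y_0^2$ and the $\ln$-part contributes $-\tfrac{4d(k-1)}{(2^{k-1}-1)^2}y_0^2$, with $ny_0^4=O(\w^4/n)=o(1)$), and which the paper's choice of expansion point avoids altogether. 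Your Hessian $-4\alpha(I_3+J_3)$, the value of the $r$-sum $\sqrt{\pi n/(8\alpha)}$, the treatment of the $n^{-3/8}$ truncation via the $\exp\brk{-\Omega(n^{1/4})}$ Gaussian tail, the $e^{O(\w/\nu)}$ slack absorbed by $\sim_\nu$, and the final constant bookkeeping all check out and reproduce the paper's constants $C_n(d,k)$ and $D(d,k)=4\alpha$; so the two routes are equivalent, with the paper's being slightly leaner (no identity to verify, one-dimensional Gaussian) and yours making the decoupling of marginal and overlap fluctuations more explicit.
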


\begin{proof}
By \Lem~\ref{Lem_sm_shifted} we know that for $\rho \in \Bwnis$ we have
	\begin{align}\label{eq_sm_shifted_1}
	\Erw \brk{Z_{\rho}^{(2)}(\hnm)} &\sim C_n(d,k)\exp\brk{nf_2^s(\rho)+O\br{\frac{\w}{\nu}}}.
	\end{align}
A Taylor expansion of $f_2^s(\rho)$ around 
$$\rho^s=
\begin{pmatrix}
{\rwni}^2 & \rwni(1-\rwni) \\ \br{1-\rwni}\rwni & \br{1-\rwni}^2
\end{pmatrix}$$
while setting $ D(d,k)=4\br{1-\frac{d(k-1)}{\br{2^{k-1}-1}^2}}$ yields
	\begin{align*}
	f_2^s(\rho) &= f_2^s\br{\rho^s}+\Theta\br{\frac{\w}{n}}\left\|\rho-\rho^s\right\|_2-\frac{D(d,k)}2\left\|\rho-\rho^s\right\|_2^2+ o\br{n^{-1}}.
	\end{align*}
Combining this with (\ref{eq_sm_shifted_1}) we find that
\begin{align}\label{eq_sm_shifted_2}
	\Erw \brk{Z_{\rho}^{(2)}(\hnm)} &\sim C_n(d,k)\exp\brk{nf_2^s\br{\rho^s}+\Theta\br{\w}\left\|\rho-\rho^s\right\|_2-n\frac{D(d,k)}2\left\|\rho-\rho^s\right\|_2^2+O\br{\frac{\w}{\nu}}}.
\end{align}

\noindent For  $\rho^0, \rho^1 \in \Bwni$, we introduce the set of overlap matrices 
$$\Bwnisr = \{ \rho \in \Bwnis: \rho_{\nix\star}= \rho^0, \rho_{\star\nix} = \rho^1 \}.$$
In particular, $\Bwnisr$ contains the ``product'' overlap $\rho^0 \otimes \rho^1$ defined by
$ \br{\rho^0 \otimes \rho^1}_{ij} = \rho^0_i \rho^1_j . $
With these definitions we see that

\begin{align}\label{eq_sm_comp} 
 \Erw \brk{\Zwnis(\hnm)}= \sum_{ \rho^0,\rho^1 \in \Bwni} \sum_{\rho \in \Bwnisr} \Erw \brk{Z_{\rho}^{(2)}(\hnm)}. 
\end{align}

\noindent Let us fix from now on two colour densities $\rho^0, \rho^1 \in \Bwni$.  We simplify the notation by setting 
$$ \widehat{\cB} = \Bwnisr, \qquad \widehat{\rho} = \rho^0 \otimes \rho^1. $$
Thus, we are going to evaluate $$\cS_1=\sum_{\rho \in \widehat{\cB}} \Erw \brk{Z_{\rho}^{(2)}(\hnm)}.$$

\noindent We define the set $\cE_n = \cbc{\boldsymbol\eps=(\eps,-\eps,-\eps,\eps), \eps \in \frac1n\mathbb{Z}, 0\le\eps\le 1}$. Then for each $\rho\in\widehat{\cB}$ we can find $\boldsymbol\eps\in \cE_n$ such that 
\begin{align*}
 \rho=\widehat\rho+\boldsymbol \eps
\end{align*}
Hence, this gives $\left\|\rho-\rho^s\right\|_2=\left\|\widehat\rho+\boldsymbol \eps-\rho^s\right\|_2$ and the triangle inequality yields
\begin{align*}
\left\|\boldsymbol \eps\right\|_2-\left\|\widehat\rho-\rho^s\right\|_2\le \left\|\widehat\rho+\boldsymbol \eps-\rho^s\right\|_2 \le \left\|\boldsymbol \eps\right\|_2+\left\|\widehat\rho-\rho^s\right\|_2.
\end{align*}
As $\left\|\widehat\rho-\rho^s\right\|_2\le\frac{1}{\nu\sqrt n}$ and for $\nu\to\infty$ it holds that $\frac{1}{\nu\sqrt n}=o(n^{-1/2})$, in this case we have
\begin{align}\label{eq_sm_norm}
 \left\|\rho-\rho^s\right\|_2=\left\|\boldsymbol \eps\right\|_2+o(n^{-1/2}).
\end{align}
Observing that $f_2^s\br{\rho^s}=\br{f_1(\rwni)}^2$ and inserting (\ref{eq_sm_norm}) into (\ref{eq_sm_shifted_2}), we find
\begin{align} \label{eq_sm_S1}  
\cS_1
\nonumber & \sim_{\nu}  \sum_{\substack{\rho \in \widehat{\cB}}} C_n(d,k)  \exp \brk{n f_2^s\br{\rho^s}- n \frac{D(d,k)}{2} \left\|\boldsymbol \eps\right\|_2^2+ o(n^{1/2})\left\|\boldsymbol \eps\right\|_2+o(1)}\\
	&  \sim_{\nu} C_n(d,k) \exp \brk{2 n f_1^s\br{\rwni}}\sum_{\substack{\rho \in \widehat{\cB}}}  \exp \brk{- n \frac{D(d,k)}{2}\left\|\boldsymbol \eps\right\|_2^2+o(n^{1/2})\left\|\boldsymbol \eps\right\|_2 }. 
\end{align}
It follows from the definition of $\widehat{\cB}$ that 
 $$\left \{ \widehat{\rho} + \boldsymbol \eps:  \boldsymbol \eps\in \cE_n, \| \boldsymbol \eps \|_2  \leq {n^{-3/8}}/2  \right \} \subset 
 \left \{ \rho \in \widehat{\cB}\right \} 
 \subset \left \{ \widehat{\rho} + \boldsymbol \eps:   \boldsymbol \eps \in \cE_n \right  \}. $$
As
 \begin{align*}
 \cS_2&\sim_{\nu} C_n(d,k) \exp \brk{n f_2^s\br{\rho^s}}
 \sum_{\boldsymbol\eps \in \cE_n ,\;\| \boldsymbol\eps\|_2 > n^{-3/8}/2} \exp \brk{- n \frac{D(d,k)}{2}\left\|\boldsymbol \eps\right\|_2^2 (1+o(1))} \\
 &\le C_n(d,k) \exp \brk{n f_2^s\br{\rho^s}}O(n)\exp \brk{- \frac{D(d,k)}{8} n^{1/4}}, 
 \end{align*}
equation (\ref{eq_sm_S1}) yields $\lim_{\nu \to \infty} \lim_{n \to \infty} \cS_2/\cS_1=0$ and we see that $\boldsymbol \eps \in \cE_n$ with $\| \boldsymbol\eps\|_2 > n^{-3/8}/2$ do only contribute negligibly. Thus, 
we conclude, using the formula of Euler-Maclaurin and a Gaussian integration, that
	\begin{align}\label{eq_sm_S1_2}
	\nonumber \cS_1&\sim_{\nu} C_n(d,k) \exp \brk{2 n f_1^s\br{\rwni}}\sum_{\boldsymbol \eps \in \cE_n}  \exp \brk{- n \frac{D(d,k)}{2}\left\|\boldsymbol \eps\right\|_2^2+o(n^{1/2})\left\|\boldsymbol \eps\right\|_2 }\\
	\nonumber 	&\sim_{\nu}  C_n(d,k) \exp \brk{2 n f_1^s\br{\rwni}} n \int \exp\brk{-n\frac{D(d,k)}{8}{\eps}^2+o(n^{1/2})\eps}d\eps\\ 
		& \sim_{\nu} C_n(d,k)\exp \brk{2 n f_1^s\br{\rwni}} \sqrt{\frac{\pi n}8} \br{1-\frac{d(k-1)}{\br{2^{k-1}-1}^2}}^{-1/2}.
	\end{align}
In particular, the last expression is independent of the choice of the vectors $\rho^0, \rho^1$ that defined $\widehat{\cB}$.
Therefore, substituting~(\ref{eq_sm_S1_2}) in the decomposition (\ref{eq_sm_comp}) completes the proof.
 \end{proof}

 \begin{proof}[Proof of \Prop~\ref{Prop_ratio_second_first}]
From \eqref{eq_lambdadeltainfty} we remember that
\begin{align}\label{eq_together_1}
 \exp \brk{\sum_{l \geq 2} \lambda_l \delta_l^2} = \exp\brk{-\frac{d(k-1)}2\frac 1{\br{2^{k-1}-1}^2}}\br{1-\frac{d(k-1)}{\br{2^{k-1}-1}^2}}^{-1/2}.
\end{align}
To prove \Prop~\ref{Prop_ratio_second_first} we combine \Lem~\ref{Lem_fm_shifted} with \Prop s~\ref{Prop_sm_eta} and \ref{Prop_second_moment_balanced_exact} yielding
\begin{align}\label{eq_together_2}
\frac{ \Erw \brk{\Zwni(\hnm)^2}}{\Erw \brk{\Zwni(\hnm)}^2}
\nonumber &\sim_{\nu} \exp\brk{\frac{d(k-1)}2\br{\frac{2^k-3}{\br{2^{k-1}-1}^2}-\frac{2}{2^{k-1}-1}}}\br{1-\frac{d(k-1)}{\br{2^{k-1}-1}^2}}^{-1/2}\\
&= \exp\brk{-\frac{d(k-1)}2\frac 1{\br{2^{k-1}-1}^2}}\br{1-\frac{d(k-1)}{\br{2^{k-1}-1}^2}}^{-1/2}.
\end{align}
Combining equations (\ref{eq_together_1}) and (\ref{eq_together_2}) completes the proof.
\end{proof}

\subsubsection*{Acknowledgements}
I thank my PhD supervisor Amin Coja-Oghlan for constant support and valuable suggestions and Victor Bapst and Samuel Hetterich for helpful discussions.

\end{document}